\newcommand{\id}{\operatorname{id}} 
\newcommand{\Aut}{\operatorname{Aut}}
\newcommand{\co}{\operatorname{co}}
\newcommand{\Per}{\operatorname{Per}}
 \newcommand{\supp}{\operatorname{supp}}
\newcommand{\Is}{\operatorname{Is}}
\newcommand{\ev}{\operatorname{ev}}
\newcommand{\Ro}{\operatorname{RO}}
   \theoremstyle{plain}%default
   \newtheorem{thm}{Theorem}[section]
   \newtheorem{prop}[thm]{Proposition}
   \newtheorem{lemma}[thm]{Lemma}  
   \newtheorem{cor}[thm]{Corollary}
   \theoremstyle{definition}
   \newtheorem{example}[thm]{Example}
   \theoremstyle{remark}
   \newtheorem{remark}[thm]{Remark}
\newtheorem{assumption}[thm]{Assumption}
\definecolor{mybgcolor}{gray}{0.8}
\definecolor{myframecolor}{rgb}{.647,.129,.149}
\newmdenv[style=mystyle]{important}
   \numberwithin{equation}{section}
        \date{\today}
\title[Positive eigenvectors]{On the positive eigenvalues and
  eigenvectors of a non-negative matrix}  
\author{Klaus Thomsen}
\date{\today}
\email{matkt@imf.au.dk}
\address{Institut for Matematik, Aarhus University, Ny Munkegade, 8000 Aarhus C, Denmark}
\begin{document}

\maketitle

\section{Introduction}

Recent work on KMS states and weights on graph $C^*$-algebras has
uncovered an intimate relation to positive eigenvalues and
eigenvectors for non-negative matrices naturally associated to the
graph and the one-parameter action. Specifically, for the gauge action on a graph
$C^*$-algebra there is a non-negative matrix $B$ over the vertexes $V$
in the graph, such that KMS weights corresponding to the inverse temperature $\beta
\in \mathbb R$ are in bijective correspondence with the non-zero non-negative
vectors $\xi$ with the properties that
\begin{equation}\label{eq1}
\sum_{w \in V} B_{vw}\xi_w \leq e^{\beta} \xi_v
\end{equation}
for all vertexes $v$, and 
\begin{equation}\label{eq2}
\sum_{w \in V} B_{vw}\xi_w = e^{\beta} \xi_v
\end{equation}
when $v$ is not a sink and does not
emit infinitely many edges. If a state, rather than a weight is sought
for, one should in addition
insist that $\sum_v \xi_v = 1$. The same kind of equations, but with
different matrices determine also the
gauge invariant KMS weights and states for more general actions
on graph $C^*$-algebras. See \cite{aHLRS},\cite{CL},\cite{Th}.

Finding the solutions to (\ref{eq1}) and (\ref{eq2}) is in general a
highly non-trivial task. The literature on positive eigenvalues and eigenfunctions of a
non-negative matrix is enormous, and for finite graphs there are in
fact results available that can be used to determine the possible
values of $\beta$ and for each $\beta$ get a description of the
corresponding vectors $\xi$, albeit with some additional work, cf. \cite{CT}. For infinite matrices this
is no longer the case - very far from. The most fundamental questions
prompted by the connection to
KMS states and weights are those that I guess come to the mind of any mathematician: 
\begin{enumerate}
\item[a)] For which $\beta$ are there non-zero non-negative solutions
  to the equations ?
\item[b)] How does the structure of the solutions vary with $\beta$ ? 
\end{enumerate} 
From the theory of countable state Markov chains it is known that
these questions are often extremely hard to answer already when the
matrix is irreducible and stochastic, but also that the
structure of the solutions can be very rich and interesting.  

The concern here is that a general setting for an approach to
the above problem is missing, although the theory of harmonic and
super-harmonic functions of countable state
Markov chains comes close. It is the purpose with the present paper to
provide a framework for the work on the problem when the graph
$C^*$-algebra is simple, or more precisely when the graph is
cofinal. As far as I know, no one has developed the theory in this setting. It involves a possibly infinite matrix $B$ as above for
which the underlying directed graph need not be strongly connected,
and what is sought are neither exactly the harmonic
functions of $e^{-\beta}B$ nor exactly the super-harmonic functions. But it is something in between and the cofinality of the
associated graph is a property which can substitute for the often
assumed strong connectivity. What I show is how the known methods,
which typically deal with harmonic or super-harmonic functions (or
vectors) of a non-negative matrix, often stochastic or sub-stochastic for which the underlying digraph is strongly
connected, can be modified to the yield the desired framework. As a
consequence the paper is expository because the ideas I present are
known. The purpose is to show how the tools must be arranged
in order to address the problem above. When the graph is strongly
connected with finite out-degree at every vertex, everything I present can be
obtained from the theory of countable state Markov chains in
combination with the work of Vere-Jones, \cite{V}, although the
translation may not be straightforward. In the more general
cofinal case, and in the presence of infinite emitters, some
non-trivial adjustments to the methods developed for Markov chains
must be performed, and rather than describing first the Markov chain
results and then the adjustments, I have chosen to give a
self-contained account, requiring no knowledge of Markov chains or
random walks. I make no claim of originality for the underlying ideas, but
I hope that the presentation will be useful for mathematicians interested
in KMS weights and states on $C^*$-algebras. Needless to say, I would
be happy if workers from other fields of mathematics also find it
worthwhile. I have kept the list of references to an absolute minimum by quoting
only my own sources. The reason
for this is that I am unable
to point to the original sources of the ideas presented, not for lack
of good will, but out of ignorance. I therefore choose to follow the
principle 'none mentioned, none forgotten'. I apologise to
anyone offended by this.

Once the the right setup is in place it is easy to begin to harvest results
from the theory of Markov chains. As an illustration of this I use in the
final section the
theorem on convergence to the boundary for a countable state Markov
chain to obtain the general and abstract description of
extremal solutions to equations (\ref{eq1}) and (\ref{eq2}) in the
cofinal case.

\section{The setting}\label{the setting}

Let $V$ be a countable set and 
$$
V \times V \ni (v,w) \mapsto A_{vw} \in [0,\infty)
$$ 
a
non-negative matrix $A$ over $V$. We can then consider the directed graph
$G$ with vertexes $V$ such that there is an arrow from $v \in V$ to
$w \in V$ if and only if $A_{vw} \neq 0$. Let $E$ denote the set of
edges (or arrows) in $G$. When $\mu$ is an edge, or more generally a finite path
in $G$, we denote by $s(\mu)$ and $r(\mu)$ its initial and terminal
vertex, respectively. Let $V_{\infty}$ denote the union of the sinks
and the infinite emitters in $V$, i.e.
$$
V_{\infty} = \left\{ v \in V : \ \# s^{-1}(v) \in \{0,\infty\} \
\right\} .
$$ 
A subset $H \subseteq V$ is \emph{hereditary} when $e \in E, s(e) \in
H \Rightarrow r(e) \in H$, and \emph{saturated} when
$$
v \in V \backslash V_{\infty}, \ r(s^{-1}(v)) \subseteq H \
\Rightarrow \ v \in H .
$$ 
We assume that $A$ is \emph{cofinal} in the sense that $V$ does not
contain any subsets that are both hereditary and saturated other than
$\emptyset$ and $V$.

\begin{lemma}\label{r1} Assume that $A$ is cofinal. Let $e_1e_2e_3 \cdots $ be an infinite
  path in $G$. For every $v \in V$ there is an $i \in \mathbb N$ and
  a finite path $\mu$ in $G$ such that $s(\mu) = v$ and $r(\mu) =
  s(e_i)$.
\end{lemma}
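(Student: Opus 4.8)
The plan is to consider the set of vertices from which the infinite path cannot be reached and to use cofinality to show it is empty. Write $\Lambda = \{\, s(e_i) : i \in \mathbb N \,\}$ for the (nonempty) set of vertices visited by the path, and define
$$
H = \{\, w \in V : \text{there is no finite path } \mu \text{ in } G \text{ with } s(\mu) = w \text{ and } r(\mu) \in \Lambda \,\},
$$
where I allow $\mu$ to be the trivial path at $w$, so that $w \in \Lambda$ already entails $w \notin H$. In particular $\Lambda \cap H = \emptyset$, and since $\Lambda \neq \emptyset$ this gives $H \neq V$. The lemma asserts exactly that every $v \in V$ admits a finite path to some $s(e_i)$, i.e. that $H = \emptyset$, so it suffices to show that $H$ is both hereditary and saturated and then invoke cofinality.

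First I would check heredity, which is immediate: if $w \in H$ and $e \in E$ with $s(e) = w$, then $r(e)$ cannot reach $\Lambda$ by a finite path, since otherwise prepending $e$ would produce a finite path from $w$ to $\Lambda$, contradicting $w \in H$; hence $r(e) \in H$. The substantive step is saturation. Here I would take $v \in V \setminus V_{\infty}$ with $r(s^{-1}(v)) \subseteq H$ and argue by contradiction that $v \in H$. If instead some finite path $\mu$ ran from $v$ to a vertex of $\Lambda$, then either $\mu$ is trivial, so that $v = s(e_i)$ for some $i$, whence $e_i \in s^{-1}(v)$ and $r(e_i) = s(e_{i+1}) \in \Lambda$ gives $r(e_i) \notin H$; or $\mu$ has positive length and begins with an edge $e$, $s(e) = v$, whose range $r(e)$ still reaches $\Lambda$ along the remainder of $\mu$, so $r(e) \notin H$ with $e \in s^{-1}(v)$. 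In both cases $r(s^{-1}(v)) \not\subseteq H$, a contradiction; hence $v \in H$ and $H$ is saturated.

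With $H$ hereditary and saturated, cofinality forces $H \in \{\emptyset, V\}$, and since $H \neq V$ we conclude $H = \emptyset$, which is precisely the assertion. I expect the only real subtlety to lie in the saturation step, and within it the bookkeeping at vertices already on the path: one must set up the definition of $H$ so that a vertex $v \in \Lambda$ is seen to have one of its own outgoing edges landing back on $\Lambda$, which is what lets the hypothesis $r(s^{-1}(v)) \subseteq H$ be contradicted. Everything else is a direct unwinding of the definitions of hereditary and saturated.
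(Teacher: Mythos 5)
Your proof is correct and follows the same route as the paper: the paper simply observes that the set of vertices lacking the stated property is hereditary and saturated and does not contain $s(e_1)$, hence is empty by cofinality. You have merely filled in the heredity and saturation checks (including the bookkeeping for vertices already on the path) that the paper leaves implicit.
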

\begin{proof} The set of vertexes $v$ which do not have the stated
  property is hereditary and saturated, and it is not all of $V$ since
  it does not contain $s(e_1)$. It must therefore be empty.
\end{proof}

A vertex $v\in V$ is \emph{non-wandering} when there is a finite path
$\mu$ in $G$ such that $v = s(\mu) = r(\mu)$. We denote by $NW$ the
set of non-wandering vertexes in $V$. The \emph{non-wandering subgraph} of $G$ is the
subgraph $G^{NW}$ consisting of the vertexes $NW$ and the edges emitted from
any of its elements. It follows that $G^{NW}$ is \emph{strongly
  connected} in the sense that for any pair $v,w \in NW$ there is a finite
path $\mu$ in $G$ such that $s(\mu) = v$ and $r(\mu) = w$:

 \begin{lemma}\label{b71} $NW$ is a (possibly empty) hereditary subset of $V$ and
  the graph $G^{NW}$ is strongly connected.
\end{lemma}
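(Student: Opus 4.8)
The plan is to extract both assertions from the cofinality hypothesis as already packaged in \reflemma{r1}, whose content is that along \emph{any} infinite path in $G$ every vertex of $V$ can reach some vertex lying on that path. The device that makes this usable for non-wandering vertices is to turn a cycle into an infinite path by repetition: if $\mu$ is a finite path with $s(\mu) = r(\mu) = v$, then the concatenation $\mu\mu\mu\cdots$ is a bona fide infinite path in $G$, and the vertices $s(e_i)$ occurring along it are exactly the vertices visited by the cycle $\mu$. This is the only idea needed; the rest is bookkeeping with sources and ranges.

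First I would show that $NW$ is hereditary. Let $v \in NW$ and let $e$ be an edge with $s(e) = v$ and $r(e) = w$; the goal is to produce a cycle through $w$. Fix a cycle $\mu$ at $v$ and apply \reflemma{r1} to the infinite path $\mu\mu\cdots$ and to the vertex $w$: this yields a finite path $\nu$ from $w$ to some vertex $x$ that lies on $\mu$. Since $x$ sits on the cycle, continuing around $\mu$ gives a finite path $\rho$ from $x$ back to $v$. Concatenating $\nu$, then $\rho$, then the edge $e$ produces a finite path $w \rightsquigarrow x \rightsquigarrow v \rightsquigarrow w$, i.e.\ a path from $w$ to itself, so $w \in NW$. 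If $NW = \emptyset$ the claim is vacuous, which is precisely why the statement allows that case.

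Next, strong connectivity of $G^{NW}$. Given $v, w \in NW$, fix a cycle $\mu_w$ at $w$ and apply \reflemma{r1} to the infinite path $\mu_w\mu_w\cdots$ and to the vertex $v$: this gives a finite path from $v$ to some vertex $y$ on $\mu_w$, and running around $\mu_w$ from $y$ yields a path from $y$ to $w$. Their concatenation is a finite path from $v$ to $w$ in $G$, which is what strong connectivity requires. I would then note that, because $NW$ is hereditary, every vertex encountered along this path is reachable from $v \in NW$ and hence itself lies in $NW$; consequently the path actually runs inside $G^{NW}$, which also confirms that the edge set of $G^{NW}$ is well defined.

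I do not anticipate a genuine obstacle: the substantive move is the passage from a single cycle to a repeated infinite path so that \reflemma{r1} applies, after which everything is closing up connecting paths using the cyclic structure. The one place where care is required is orienting the concatenations correctly, since one must use $s(e) = v$ and $r(e) = w$ in the right order when forming the closing path $w \rightsquigarrow x \rightsquigarrow v \rightsquigarrow w$; I would keep explicit track of sources and ranges at each splice to avoid a direction error.
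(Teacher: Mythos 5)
Your argument is correct and is essentially the paper's own proof: both reduce to Lemma \ref{r1} applied to the infinite path obtained by repeating a cycle, then close up the loop (respectively the connecting path) by running around that cycle. Your version merely spells out the splicing of sources and ranges that the paper leaves implicit, and the concatenation $\nu\rho e$ from $w$ back to $w$ is oriented correctly.
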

\begin{proof} When $v \in NW$ there is an infinite path in $G$ which
  visits $v$ infinitely often. So when $e \in E$ and $s(e) \in NW$ the cofinality
  of $G$ ensures that there is a path $\mu$ in $G$ connecting $r(e)$ to
  $s(e)$ by Lemma \ref{r1}. Then $e\mu $ is a loop in $G$ containing $r(e)$, proving
  that $r(e) \in NW$, and hence that $NW$ is hereditary. The proof
  that $G^{NW}$ is strongly connected is similar. 
\end{proof}

Let $\beta \in \mathbb R$.
We are
here looking for maps (or vectors) $\xi : V \to
[0,\infty)$ such that
\begin{equation}\label{f20}
\sum_{w \in V} A_{vw}\xi_w = e^{\beta} \xi_v
\end{equation}
for all $v \in V \backslash V_{\infty}$ and
\begin{equation}\label{f20b}
\sum_{w \in V} A_{vw}\xi_w  \leq e^{\beta} \xi_v
\end{equation}
for $v \in V_{\infty}$. We say then that
$\xi$ is \emph{almost $\beta$-harmonic} for $A$. When (\ref{f20}) holds for all $v\in V$, and not only for $v
\in V \backslash V_{\infty}$, we say that $\xi$ is \emph{$\beta$-harmonic} for $A$.

\begin{lemma}\label{b65} Let $H$ be a non-empty hereditary subset of $V$ and
  $\eta : H \to [0,\infty)$ a function such that (\ref{f20}) holds for all
  $v \in H \backslash V_{\infty}$ and (\ref{f20b}) holds for all $v
  \in H \cap V_{\infty}$. There is a unique almost
  $\beta$-harmonic vector $\xi$ such that
$\xi_v = \eta_v$ for all $v \in H$.
\end{lemma}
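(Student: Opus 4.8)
The plan is to extend $\eta$ from $H$ to all of $V$ by an inductive ``saturation'' procedure, using the equality (\ref{f20}) to force the values outside $H$ one layer at a time, and then to invoke cofinality to show the procedure reaches every vertex. Concretely, I would set $H_0 = H$ with $\xi_v = \eta_v$ on $H_0$, and define inductively
\eq{H_{n+1} = H_n \cup \left\{ v \in V \setminus V_\infty : \ r(s^{-1}(v)) \subseteq H_n \right\}.}
For each newly adjoined vertex $v \in H_{n+1} \setminus H_n$ I would put $\xi_v = e^{-\beta} \sum_{w} A_{vw}\xi_w$. This is legitimate: since $v \notin V_\infty$ the vertex emits finitely many (and at least one) edges, so the sum is finite and involves only targets $w \in H_n$, where $\xi$ is already defined; and $e^\beta > 0$, so the value is uniquely determined and (\ref{f20}) holds at $v$ by construction. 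As $A_{vw} \geq 0$ and the earlier values are non-negative, an immediate induction gives $\xi \geq 0$ throughout.

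The crux is to show that $H_\infty := \bigcup_n H_n$ equals $V$. First, each $H_n$ is hereditary: this holds for $H_0 = H$ and is preserved at each step, because every vertex adjoined to $H_{n+1}$ has all of its edge-targets already in $H_n$; hence $H_\infty$ is hereditary. Second, $H_\infty$ is saturated: if $v \in V \setminus V_\infty$ satisfies $r(s^{-1}(v)) \subseteq H_\infty$, then since $s^{-1}(v)$ is finite its finitely many targets lie in a single $H_N$, whence $v \in H_{N+1} \subseteq H_\infty$. Being hereditary, saturated and nonempty (it contains $H$), cofinality forces $H_\infty = V$. I expect this to be the main point of the argument, and the place where the finite out-degree at vertices of $V \setminus V_\infty$ and the cofinality hypothesis are both used essentially; it is a close relative of the proof of Lemma \ref{r1}.

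The same computation delivers the structural fact on which uniqueness rests: since the procedure only ever adjoins vertices from $V \setminus V_\infty$, the equality $H_\infty = V$ forces $V_\infty \subseteq H$. Consequently $\xi$ satisfies (\ref{f20}) at every $v \in V \setminus V_\infty$ by construction, while (\ref{f20b}) holds at every $v \in V_\infty \subseteq H$ because it already holds for $\eta$ there; thus $\xi$ is almost $\beta$-harmonic and restricts to $\eta$ on $H$. For uniqueness I would induct along the same filtration: any almost $\beta$-harmonic $\xi'$ agreeing with $\eta$ on $H$ agrees with $\xi$ on $H_0$, and if $\xi' = \xi$ on $H_n$ then for $v \in H_{n+1} \setminus H_n$ both satisfy (\ref{f20}) with right-hand sides depending only on $H_n$-values, forcing $\xi'_v = \xi_v$. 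The inclusion $V_\infty \subseteq H$ is precisely what removes the one genuine danger here: on $V_\infty$ the harmonicity requirement is only the inequality (\ref{f20b}), which does not pin down the value, so uniqueness would fail on $V_\infty \setminus H$ were that set non-empty — cofinality is exactly what guarantees it is not.
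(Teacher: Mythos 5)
Your argument is correct and is essentially the paper's own proof: the same inductive saturation $H \subseteq H_1 \subseteq H_2 \subseteq \cdots$, with values forced by (\ref{f20}) on each newly adjoined vertex of $V\setminus V_{\infty}$, and cofinality applied to the hereditary and saturated union. Your explicit remark that the construction forces $V_{\infty}\subseteq H$, which is what makes uniqueness unproblematic on the vertices where only the inequality (\ref{f20b}) is imposed, is left implicit in the paper but is a worthwhile clarification.
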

\begin{proof} Set 
$$
H_1 = \left\{ v \in V  \backslash V_{\infty} : \ A_{vw} \neq 0 \ \Rightarrow \ w \in
  H \right\} \cup H.
$$
Then $H_1$ is hereditary, contains $H$ and there is a unique extension of $\eta$ to
$H_1$ given by the condition that $e^{\beta} \eta_v = \sum_{w\in V} A_{vw}\eta_w$ for all $v
\in H_1 \backslash H$. Continuing by induction we get a sequence $H \subseteq H_1
\subseteq H_2 \subseteq H_3 \subseteq \cdots$ of subsets of $V$ and a unique extension $\xi$
of $\eta$ to $\bigcup_n H_n$. This completes the proof since
$\bigcup_n H_n$ is hereditary and saturated, and hence equal to
$V$. 
\end{proof}

\begin{lemma}\label{r15} Assume that $G$ contains a sink (i.e. $A$
  contains a zero row). It follows that for all $\beta \in \mathbb R$
  there is a non-zero almost $\beta$-harmonic vector, unique up to
  multiplication by constants.
\end{lemma}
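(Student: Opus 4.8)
The plan is to build the vector from a single sink by invoking Lemma~\ref{b65}, and then to force uniqueness through a positivity principle that cofinality makes available. For existence, first I would fix a sink $v_0$, so that $v_0 \in V_{\infty}$ and $A_{v_0 w} = 0$ for every $w$. The singleton $H = \{v_0\}$ is hereditary, since $v_0$ emits no edge. Setting $\eta_{v_0} = 1$, the hypotheses of Lemma~\ref{b65} hold trivially: the set $H \backslash V_{\infty}$ is empty, so (\ref{f20}) imposes nothing, while at $v_0 \in H \cap V_{\infty}$ the inequality (\ref{f20b}) merely reads $0 \leq e^{\beta}$. Lemma~\ref{b65} then produces an almost $\beta$-harmonic vector $\xi$ with $\xi_{v_0} = 1$, which is in particular non-zero.

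Next I would establish that every non-zero almost $\beta$-harmonic vector $\xi$ is strictly positive, by examining its zero set $Z = \{v \in V : \xi_v = 0\}$. If some $v \in Z$ emits an edge $e$, then $v$ is not a sink, so one of (\ref{f20}), (\ref{f20b}) applies at $v$ and gives $\sum_w A_{vw}\xi_w \leq e^{\beta}\xi_v = 0$; since every summand is non-negative this forces $\xi_{r(e)} = 0$, whence $Z$ is hereditary. On the other hand, if $v \in V \backslash V_{\infty}$ satisfies $r(s^{-1}(v)) \subseteq Z$, then $\sum_w A_{vw}\xi_w = 0$ and (\ref{f20}) yields $\xi_v = 0$, so $Z$ is saturated. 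Cofinality then leaves only $Z = \emptyset$ or $Z = V$, and as $\xi \neq 0$ the first alternative must hold, giving $\xi_v > 0$ for all $v$.

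For uniqueness, given two non-zero almost $\beta$-harmonic vectors $\xi$ and $\xi'$, the positivity just proved yields $\xi_{v_0}, \xi'_{v_0} > 0$, so I may rescale $\xi$ so that $\xi_{v_0} = \xi'_{v_0}$. Then $\xi$ and $\xi'$ are both almost $\beta$-harmonic and coincide on the hereditary set $\{v_0\}$, and the uniqueness clause of Lemma~\ref{b65} forces $\xi = \xi'$. Hence any two non-zero almost $\beta$-harmonic vectors are proportional, which is the asserted uniqueness up to constants.

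I expect the positivity step to be the crux. There one must check that the zero set is simultaneously hereditary and saturated, handling sinks, infinite emitters, and ordinary vertexes correctly, so that the non-negativity of $A$ and $\xi$ together with (\ref{f20})--(\ref{f20b}) can be converted into the dichotomy supplied by cofinality. Once this is in hand, both existence and uniqueness are immediate consequences of Lemma~\ref{b65}, the sink $v_0$ serving as the single vertex whose value pins down the entire vector.
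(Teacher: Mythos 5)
Your proof is correct and follows the paper's own route: both existence and uniqueness rest on Lemma~\ref{b65} applied to the hereditary singleton consisting of the sink. The only difference is that your positivity step (which is exactly the content of Lemma~\ref{b16}, stated immediately after this lemma in the paper) is more than is needed here --- since the zero vector is the unique almost $\beta$-harmonic extension of the zero function on $\{v_0\}$, any non-zero almost $\beta$-harmonic vector automatically satisfies $\xi_{v_0}\neq 0$, and the rescaling argument then goes through directly.
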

\begin{proof} Note that a sink $s$ constitutes a hereditary subset in
  itself. It follows therefore from Lemma \ref{b65} that an almost
  $\beta$-harmonic vector is determined by its value at $s$. To show
  that there exists a non-zero almost $\beta$-harmonic vector, set
  $\eta_s=1$ and apply Lemma \ref{b65}.
\end{proof}

\begin{lemma}\label{b16} Let $\xi$ be a non-zero almost $\beta$-harmonic
  vector. It follows that $\xi_v > 0$ for all $v \in
  V$.
\end{lemma}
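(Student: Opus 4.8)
The plan is to study the zero set $Z = \{v \in V : \xi_v = 0\}$ and to show that it is both hereditary and saturated. Cofinality of $A$ will then force $Z \in \{\emptyset, V\}$, and since $\xi \neq 0$ we must have $Z \neq V$, hence $Z = \emptyset$, which is exactly the claim.

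First I would verify that $Z$ is hereditary. Suppose $v \in Z$ and $e \in E$ with $s(e) = v$; the goal is $r(e) \in Z$. Whether or not $v \in V_{\infty}$, the almost $\beta$-harmonic conditions \eqref{f20} and \eqref{f20b} both give
$$
\sum_{w \in V} A_{vw}\xi_w \leq e^{\beta}\xi_v = 0 .
$$
Since every summand $A_{vw}\xi_w$ is non-negative and the sum vanishes, each summand must be zero; in particular $A_{v,r(e)}\,\xi_{r(e)} = 0$, and as $A_{v,r(e)} \neq 0$ we conclude $\xi_{r(e)} = 0$, i.e. $r(e) \in Z$.

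Next I would check that $Z$ is saturated. Let $v \in V \setminus V_{\infty}$ with $r(s^{-1}(v)) \subseteq Z$. The essential point is that for such $v$ the exact equality \eqref{f20} is available, so
$$
e^{\beta}\xi_v = \sum_{w \in V} A_{vw}\xi_w .
$$
Every $w$ with $A_{vw} \neq 0$ lies in $r(s^{-1}(v)) \subseteq Z$ and hence satisfies $\xi_w = 0$, so the right-hand side is zero; since $e^{\beta} \neq 0$ this forces $\xi_v = 0$, i.e. $v \in Z$.

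Having shown $Z$ to be hereditary and saturated, cofinality yields $Z = \emptyset$ or $Z = V$, and the hypothesis $\xi \neq 0$ eliminates the second possibility. The only genuinely delicate point is the saturation step, where it is crucial that the exact equation \eqref{f20}, rather than merely the inequality \eqref{f20b}, holds; this is precisely why the definition of saturation is restricted to $v \in V \setminus V_{\infty}$, and it is what allows the argument to pass smoothly through the sinks and infinite emitters in $V_{\infty}$.
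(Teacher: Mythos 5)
Your proof is correct and is exactly the argument the paper has in mind: the paper's proof simply asserts that the zero set is hereditary and saturated and invokes cofinality, and your write-up supplies the details (the inequality case for heredity, the exact equation for saturation) in precisely the intended way.
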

\begin{proof} The set $\left\{v \in V  : \ \xi_v = 0  \right\}$ is
  hereditary and saturated. Since $\xi$ is not zero the set is not all
  of $V$, and it must therefore
  be empty. 
\end{proof}

We define the matrices $A^n, n = 0,1,2, \dots$, recursively such that
$A^0 = I$, where $I$ is the identity matrix,
$$
I_{vw} = \begin{cases} 1 & \ \text{when} \ v = w \\ 0 & \
  \text{otherwise}, \end{cases}
$$
$A^1 = A$, and
$$
A^{n+1}_{vw} = \sum_{u \in V} A_{vu}A^n_{uw}
$$
when $n \geq 1$. While the entries in $A$ are finite by assumption,
this need not be the case for $A^n$, but since (\ref{f20}) and (\ref{f20b}) imply
that
\begin{equation}\label{r11}
\sum_{w \in V} A^n_{vw}\xi_w \leq e^{n\beta} \xi_v
\end{equation}
for all $v,n$, the following conclusion follows from Lemma \ref{b16}.

\begin{lemma}\label{r12} Assume that there is a non-zero  almost $\beta$-harmonic vector for $A$. It follows that $A^n_{vw} <\infty$
  for  all $n\in \mathbb N$ and all $v,w \in V$.
\end{lemma}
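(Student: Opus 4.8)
The plan is to proceed in two stages: first establish the weighted row-sum estimate (\ref{r11}) for every $n$ by induction on $n$, and then read off the finiteness of each individual entry from the strict positivity of $\xi$ guaranteed by Lemma~\ref{b16}. The second stage is the real point; the first is the routine bookkeeping that the text has already flagged as following from (\ref{f20}) and (\ref{f20b}).

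For the induction, the base case $n=1$ is exactly the defining conditions (\ref{f20}) and (\ref{f20b}), which together give $\sum_{w} A_{vw}\xi_w \le e^{\beta}\xi_v$ for all $v \in V$. For the inductive step I would assume (\ref{r11}) for a given $n$ and compute, using the defining relation $A^{n+1}_{vw} = \sum_{u} A_{vu}A^n_{uw}$ and interchanging the order of summation,
$$\sum_{w} A^{n+1}_{vw}\xi_w = \sum_{u} A_{vu}\Big(\sum_{w} A^n_{uw}\xi_w\Big) \le e^{n\beta}\sum_{u} A_{vu}\xi_u \le e^{(n+1)\beta}\xi_v.$$
The one point requiring care is the interchange of the two sums, but since every quantity in sight is non-negative this is legitimate by Tonelli's theorem (a double series of non-negative terms may be summed in any order, with values allowed in $[0,\infty]$), so no prior finiteness hypothesis is needed to push the induction through.

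With (\ref{r11}) in hand the conclusion would be immediate: fixing $n \in \mathbb N$ and $v,w \in V$, I would observe that every term of the series $\sum_{w'} A^n_{vw'}\xi_{w'}$ is non-negative, so the single term $A^n_{vw}\xi_w$ is dominated by the whole sum, whence $A^n_{vw}\xi_w \le e^{n\beta}\xi_v < \infty$. Since $\xi_w > 0$ by Lemma~\ref{b16}, dividing yields $A^n_{vw} \le e^{n\beta}\xi_v/\xi_w < \infty$, which is exactly the assertion.

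I do not expect a serious obstacle: the whole argument is elementary once the strict positivity of $\xi$ from Lemma~\ref{b16} is available. The only thing to watch is the handling of sums that a priori take values in $[0,\infty]$, and this is precisely where the non-negativity of $A$ is essential — it is what makes Tonelli's theorem applicable and lets the induction for (\ref{r11}) run before finiteness of the entries is known.
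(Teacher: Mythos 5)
Your argument is exactly the one the paper intends: the inequality (\ref{r11}) is obtained by induction from (\ref{f20}) and (\ref{f20b}) (with Tonelli justifying the interchange of non-negative sums), and the finiteness of $A^n_{vw}$ then follows by dividing $A^n_{vw}\xi_w \leq e^{n\beta}\xi_v < \infty$ by $\xi_w > 0$, which Lemma~\ref{b16} supplies. The paper leaves these steps implicit, so your write-up is simply a correct and complete expansion of the same proof.
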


In the following we therefore assume that all powers of $A$ are
finite. Since Lemma \ref{r15} contains all the information we seek
when there is a sink present, there is also nothing lost by assuming that there are no sinks in $G$. To summarise we assume in
the following that 
\begin{enumerate}
\item[i)] $G$ is cofinal,
\item[ii)] that there are no sinks in $G$ (equivalently, there are no
  zero rows in $A$), and
\item[iii)]  that $A^n_{vw} <\infty$
  for  all $n\in \mathbb N$ and all $v,w \in V$.
\end{enumerate}

In particular, from now on $V_{\infty}$ consists of the infinite
emitters in $G$, corresponding to rows in $A$ with infinitely many
non-zero entries.

\begin{lemma}\label{r12}  No vertex $v \in V \backslash NW$ is an
  infinite emitter, i.e. $V_{\infty} \subseteq NW$.
\end{lemma}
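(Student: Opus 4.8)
The plan is to argue by contradiction, manufacturing from a hypothetical infinite emitter outside $NW$ a non-trivial hereditary and saturated subset of $V$, in violation of cofinality. Suppose then that some $v_0 \in V_{\infty}$ fails to be non-wandering. Since $v_0 \notin NW$ there is no finite path from $v_0$ back to $v_0$, so in particular $v_0$ emits no loop-edge. I would then consider the set
\[
H = \left\{ w \in V \backslash \{v_0\} : \ \text{there is no finite path from } w \text{ to } v_0 \right\}
\]
of vertexes, other than $v_0$ itself, from which $v_0$ cannot be reached. As $v_0 \notin H$, the set $H$ is a proper subset of $V$, so once I show that $H$ is hereditary and saturated, cofinality forces $H = \emptyset$.

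First I would check that $H$ is hereditary: if $w \in H$ and $e$ is an edge with $s(e) = w$, then $r(e) \neq v_0$ (otherwise $e$ itself is a path from $w$ to $v_0$), and $v_0$ cannot be reached from $r(e)$ (otherwise prefixing $e$ produces a path from $w$ to $v_0$); hence $r(e) \in H$. The step I expect to be the crux is saturation, and this is exactly where the hypothesis $v_0 \in V_{\infty}$ is used. Suppose $u \in V \backslash V_{\infty}$ satisfies $r(s^{-1}(u)) \subseteq H$. Because $v_0 \in V_{\infty}$ we automatically have $u \neq v_0$, and since every out-neighbour of $u$ lies in $H$, each such out-neighbour is different from $v_0$ and cannot reach $v_0$; therefore no path issuing from $u$ can reach $v_0$, and $u \in H$. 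Note that the a priori more natural choice of also admitting $v_0$ into $H$ would fail saturation precisely at those in-neighbours of $v_0$ lying outside $V_{\infty}$ whose remaining out-neighbours cannot reach $v_0$; excluding $v_0$ repairs this, and is legitimate only because the saturation condition ignores the elements of $V_{\infty}$.

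Finally I would derive the contradiction. By cofinality $H$ is either $\emptyset$ or $V$, and since $v_0 \notin H$ we get $H = \emptyset$; that is, $v_0$ can be reached from every other vertex of $V$. As $v_0$ is an infinite emitter it has an out-neighbour $w_1$, and $w_1 \neq v_0$ because $v_0 \notin NW$. Then $w_1 \notin H$, so there is a finite path from $w_1$ to $v_0$, which together with the edge from $v_0$ to $w_1$ closes into a loop at $v_0$. This makes $v_0$ non-wandering, contradicting the choice of $v_0$ and proving $V_{\infty} \subseteq NW$.
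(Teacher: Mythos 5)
Your proof is correct and takes essentially the same route as the paper: your set $H$ is precisely the complement of the paper's set $A = \{w \in V : w \text{ reaches } v\} \cup \{v\}$, and both arguments use cofinality to conclude that every out-neighbour of the infinite emitter can reach it back, closing a loop. The differences (the contradiction framing, and your explicit verification of heredity and saturation, which the paper leaves to the reader) are cosmetic.
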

\begin{proof} Let $v \in V$ be an infinite emitter. Set
$$
A = \left\{w \in V: \ \text{there is a finite path $\mu$ in $G$ such
    that} \ w = s(\mu) \ \text{and} \ \ r(\mu) = v \right\} \cup \{v\}.
$$ 
Since $V \backslash A$ is hereditary and saturated, it follows
that $A = V$. In particular, $r(s^{-1}(v)) \subseteq A$, which implies
that $v\in NW$.    
\end{proof}

\begin{lemma}\label{i5} Let $\beta \in \mathbb R$. Assume that
  there are vertexes $v_0,w_0 \in V$ such that $\sum_{n=0}^{\infty}
  A^n_{v_0w_0} e^{-n\beta} = \infty$. Then
$\sum_{n=0}^{\infty} A^n_{vw_0} e^{-n\beta} = \infty$
for all vertexes $v \in V$.
\end{lemma}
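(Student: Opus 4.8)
The plan is to fix the second index $w_0$ and to study the set
$$
D \ = \ \left\{ v \in V : \ \sum_{n=0}^{\infty} A^n_{vw_0} e^{-n\beta} = \infty \right\}
$$
of vertexes at which the relevant series diverges. The hypothesis says exactly that $v_0 \in D$, and the conclusion is exactly that $D = V$. Since $A$ is cofinal, it suffices to show that $V \backslash D$ is both hereditary and saturated; then $V \backslash D$ is either $\emptyset$ or $V$, and as it does not contain $v_0$ it must be empty.

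For the hereditary property I would use the elementary one-step estimate $A^{n+1}_{vw_0} = \sum_{u \in V} A_{vu} A^n_{uw_0} \geq A_{vv'} A^n_{v'w_0}$, valid for any edge $e$ with $s(e) = v$ and $r(e) = v'$. Multiplying by $e^{-(n+1)\beta}$ and summing over $n$ shows that $v' \in D$ forces $v \in D$. Equivalently, if $s(e) \in V \backslash D$ then $r(e) \in V \backslash D$, which is the definition of hereditary.

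The saturation of $V \backslash D$ is the heart of the matter, and it is here that the restriction in the definition of saturation to vertexes outside $V_{\infty}$ becomes essential. Let $v \in D$ be a finite emitter, i.e.\ $v \in (V \backslash V_{\infty}) \cap D$. Decomposing each power through its first step and interchanging the two non-negative summations gives
$$
\sum_{n=1}^{\infty} A^n_{vw_0} e^{-n\beta} \ = \ \sum_{u \in V} A_{vu} e^{-\beta} \sum_{m=0}^{\infty} A^m_{uw_0} e^{-m\beta},
$$
where the outer sum runs over the \emph{finitely many} $u$ with $A_{vu} \neq 0$ precisely because $v$ is not an infinite emitter. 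The left-hand side is infinite since $v \in D$ (deleting the finite $n=0$ term is harmless), so at least one of the finitely many inner series must diverge; that is, some successor $u$ of $v$ lies in $D$. Thus $r(s^{-1}(v)) \not\subseteq V \backslash D$, which is exactly the contrapositive of the saturation condition at $v$. Hence $V \backslash D$ is saturated, and the proof concludes by cofinality as above.

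I expect the main obstacle to be resisting the temptation to argue by reachability of $v_0$: one might hope that cofinality forces every vertex to reach $v_0$, so that a fixed prefix could be prepended to the paths counted by $\sum_{n} A^n_{v_0w_0}e^{-n\beta}$. This fails in general, since $v_0$ need not be non-wandering even when the series diverges, the divergence being possibly produced entirely by recurrence near $w_0$. Passing to the set $D$ sidesteps this difficulty, and the finite-emitter decomposition above is exactly what upgrades $V \backslash D$ from hereditary to saturated.
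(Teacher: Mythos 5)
Your proof is correct and follows essentially the same route as the paper: the paper defines the set $\mathcal C$ of vertexes where the series converges (your $V\backslash D$) and uses the one-step decomposition $\sum_{u}A_{vu}\sum_{n=0}^{N}A^n_{uw_0}e^{-n\beta}=e^{\beta}\sum_{n=0}^{N+1}A^n_{vw_0}e^{-n\beta}-e^{\beta}I_{vw_0}$ to see that $\mathcal C$ is hereditary and saturated, then concludes by cofinality exactly as you do. Your separation of the argument into a lower-bound estimate for heredity and a finite-emitter interchange of sums for saturation is just a slightly more explicit writing of the same identity.
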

\begin{proof} Set
$$
\mathcal C = \left\{ v \in V : \ \sum_{n=0}^{\infty} A^n_{vw_0}
  e^{-n\beta} < \infty \right\} .
$$
The equality
\begin{equation}\label{i2}
\sum_{u \in V} A_{vu} \sum_{n=0}^N A^n_{uw_0}e^{-n\beta} = e^{\beta}
\sum_{n=0}^{N+1} A^n_{vw_0}e^{-n\beta} - e^{\beta} I_{vw_0}
\end{equation}
shows that $\mathcal C$ is hereditary and saturated. Since $v_0 \notin
\mathcal C$ it follows that $\mathcal C = \emptyset$.
 
\end{proof}

When $NW$ is not empty we take an element $v \in NW$ and set
$$
\beta_0 = \log \left( \limsup_n \ (A^n_{vv})^{\frac{1}{n}} \right) 
$$
with the convention that $\log \infty = \infty$. Since $G^{NW}$ is strongly connected by Lemma \ref{b71} the value $\beta_0$ is independent of
the choice of vertex $v \in NW$, and in fact
$$
\beta_0 =  \log \left( \limsup_n \ (A^n_{vw})^{\frac{1}{n}} \right),
$$
for all $v,w \in NW$. We see from (\ref{r11}) that
$A^n_{vv}\xi_v \leq e^{n\beta}\xi_v$ for all $n$ when $\xi$ is an almost
$\beta$-harmonic vector. Since $\xi_v > 0$ by Lemma \ref{b16} it
follows that there can not be an almost $\beta$-harmonic vector for
$A$ unless $\beta \geq \beta_0$. We will therefore also in the
following assume that
\begin{enumerate}
\item[iv)] $\beta_0 =\log \left( \limsup_n \ (A^n_{vv})^{\frac{1}{n}}
  \right) \ < \ \infty$ for all $v\in NW$,
\end{enumerate} 
when $NW \neq \emptyset$.

\section{The recurrent case}

In this section we consider the case where $NW \neq \emptyset$ and
where 
\begin{equation}\label{r19}
\sum_{n=0}^{\infty} A^n_{vv}e^{-n\beta_0}  = \infty 
\end{equation}
for one and hence all $v\in NW$. We say that $A$ is \emph{recurrent}
in this case. The main result will be the following theorem. In the
irreducible case, i.e. when $G$ is strongly connected, it is
contained in Corollary 2 on page 371 in \cite{V}.

\begin{thm}\label{r20} Assume that $A$ is recurrent. There is a
  non-zero almost
  $\beta_0$-harmonic vector which is unique up to multiplication by
  scalars, and it is $\beta_0$-harmonic for $A$.
\end{thm}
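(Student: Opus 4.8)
The plan is to reduce everything to the strongly connected graph $G^{NW}$, import the classical generating-function argument there, and then transport the result to all of $V$ through the extension Lemma~\ref{b65}. The structural facts that make the cofinal case behave like the irreducible one are that $NW$ is hereditary (\reflemma{b71}) and that every infinite emitter already lies in $NW$ (\reflemma{r12}); the latter is what will let me upgrade an \emph{almost} $\beta_0$-harmonic vector to a genuinely $\beta_0$-harmonic one for free.

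Write $R = e^{-\beta_0}$ and, for $v,w \in NW$, let $f^{(n)}_{vw}$ be the total $A$-weight of the length-$n$ paths from $v$ to $w$ that hit $w$ only at the final step (the first-passage weights), and set $F_{vw}(z) = \sum_{n\ge 1} f^{(n)}_{vw} z^n$. First I would record the renewal identity $\sum_n A^n_{w_0w_0} z^n = (1 - F_{w_0w_0}(z))^{-1}$ for $z < R$; letting $z \uparrow R$ and combining recurrence (\ref{r19}) with monotone convergence yields $F_{w_0w_0}(R) = 1$. This single equality does all the work. It gives finiteness of $\eta_v := F_{vw_0}(R)$ for every $v \in NW$: choosing by strong connectivity a path $w_0 \to v$ of positive weight $\gamma$ avoiding $w_0$ in its interior and prepending it to each first-passage path $v \to w_0$ produces distinct first-return paths at $w_0$, whence $\gamma\,F_{vw_0}(R) \le F_{w_0w_0}(R) = 1$. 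Positivity of $\eta$ is immediate from strong connectivity, and $\eta_{w_0} = 1$. Next I would verify that $\eta$ is $\beta_0$-harmonic on $NW$ by substituting the recursion $f^{(n+1)}_{vw_0} = \sum_{u \ne w_0} A_{vu} f^{(n)}_{uw_0}$ into $\sum_u A_{vu}\eta_u$; the interchange of the two sums is legitimate by Tonelli since all terms are non-negative, and the computation returns $\sum_u A_{vu}\eta_u = e^{\beta_0}\eta_v$ for $v \ne w_0$, while at $v = w_0$ it returns $e^{\beta_0}F_{w_0w_0}(R) = e^{\beta_0}$. Crucially the Tonelli step is valid verbatim at the infinite emitters in $NW$, so equality (not merely inequality) holds there. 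Since $NW$ is hereditary, $\eta$ satisfies the hypotheses of \reflemma{b65}, which extends it to an almost $\beta_0$-harmonic vector $\xi$ on $V$. Finally $\xi$ is in fact $\beta_0$-harmonic: on $NW$ equality holds by construction (including at $V_\infty \subseteq NW$), and for $v \in V \setminus NW$ one has $v \notin V_\infty$ by \reflemma{r12}, so (\ref{f20}) holds there by the very definition of almost $\beta_0$-harmonicity.

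For uniqueness, let $\xi$ be any non-zero almost $\beta_0$-harmonic vector; it is strictly positive by \reflemma{b16}, and by \reflemma{b65} it is determined by its restriction to $NW$, so it suffices to show this restriction is a scalar multiple of $\eta$. Normalising $\xi_{w_0} = 1$, I would first prove the lower bound $\xi \ge \eta$ on $NW$ by iterating the sub-invariance $\sum_u A_{vu}\xi_u \le e^{\beta_0}\xi_v$ while peeling off the contributions that reach $w_0$: induction gives $\xi_v \ge \sum_{k=1}^N f^{(k)}_{vw_0} R^k$ for every $N$, and letting $N \to \infty$ yields $\xi_v \ge F_{vw_0}(R) = \eta_v$. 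Then $\zeta := \xi - \eta \ge 0$ is again sub-invariant (the subtraction is licit because $A\eta$ and $A\xi$ are finite) and satisfies $\zeta_{w_0} = 0$. The set $\{ v \in NW : \zeta_v = 0 \}$ is hereditary and contains $w_0$, hence equals all of $NW$ by strong connectivity, so $\zeta \equiv 0$ and $\xi = \eta$ on $NW$. By \reflemma{b65} this forces $\xi$ to agree with the vector constructed above up to the chosen scalar, proving uniqueness.

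The main obstacle is the recurrent normalisation $F_{w_0w_0}(R) = 1$ and the finiteness of the first-passage sums it must supply: all three conclusions—finiteness of $\eta$, harmonicity at $w_0$, and the lower bound in the uniqueness step—hinge on it, and this is exactly where hypothesis (\ref{r19}) enters. A secondary point requiring care is the behaviour at the infinite emitters: one must check that every interchange of summation used above is an instance of Tonelli with a finite right-hand side, since this is precisely what produces equality rather than inequality in (\ref{f20}) at the vertices of $V_\infty$.
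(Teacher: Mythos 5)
Your proof is correct. The engine is the same as the paper's --- Vere-Jones' renewal identity (Lemma~\ref{f80}), the value $\sum_{n}r_{w_0w_0}(n)e^{-n\beta_0}=1$ obtained from Fatou's lemma plus monotone convergence, and the first-passage lower bound of Lemma~\ref{b11} --- but you organise the surrounding argument differently in two respects. First, the paper disposes of the gap between almost $\beta_0$-harmonic and $\beta_0$-harmonic at the outset via the Riesz decomposition (Lemma~\ref{r23} and Corollary~\ref{r25}): in the recurrent case no non-zero $k$ is $\beta_0$-summable, so every almost $\beta_0$-harmonic vector is automatically harmonic. You avoid that machinery by checking equality at the vertices of $V_{\infty}\subseteq NW$ directly (a legitimate Tonelli interchange with finite right-hand side) and then letting uniqueness carry the conclusion to every almost $\beta_0$-harmonic vector; this is more elementary, but it only yields the harmonicity of general almost harmonic vectors a posteriori, whereas Corollary~\ref{r25} is a statement of independent use elsewhere in the paper. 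Second, you build $\eta$ on $NW$ only, proving finiteness via the path-prepending bound $\gamma R^{|\mu|}F_{vw_0}(R)\le F_{w_0w_0}(R)=1$ and then invoking Lemma~\ref{b65} to extend, while the paper proves finiteness of $\sum_{n} r_{vw}(n)e^{-n\beta_0}$ on all of $V$ in one stroke by observing that the set where the sum is finite is hereditary, saturated and contains $w$, hence equals $V$ by cofinality. Your uniqueness endgame also differs: after the common lower bound $\xi\ge\eta$ you show the non-negative, sub-invariant difference vanishes on a hereditary subset of $NW$ containing $w_0$, whereas the paper compares $\sum_{v}A^n_{wv}\xi_v$ with $\sum_{v}A^n_{wv}\xi'_v$; the two arguments are of comparable length and both finish by combining heredity of $NW$ with Lemma~\ref{b65}.
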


The proof will require some preparations, some of which will also play a
role in the following sections. Let $\beta \geq \beta_0$. If the sum
\begin{equation}\label{april1}
\sum_{n=0}^{\infty} A^n_{uu} e^{-n\beta} 
\end{equation}
is finite for some $u \in NW$, it will be finite for all $u \in NW$
because $G^{NW}$ is strongly connected by Lemma \ref{b71}. Since
$$
\left\{ v \in V: \ \sum_{n=0}^{\infty} A^n_{vu} e^{-n\beta} < \infty
\right\}
$$
is hereditary and saturated, we conclude that if the sum
(\ref{april1}) is finite for some $u \in NW$, the sums
\begin{equation}\label{april2}
 \sum_{n=0}^{\infty} A^n_{vu} e^{-n\beta},
\end{equation}
where $v \in V, u \in NW$, will all be finite. Since $V_{\infty}  \subseteq
NW$ by Lemma \ref{r12}, it follows that the sums (\ref{april2}) are
all finite when $v \in V$ and $u\in V_{\infty}$, provided the sum
(\ref{april1}) is finite for one (and hence all) $u \in NW$.

\begin{lemma}\label{r22} Let $k : V_{\infty} \to [0,\infty)$
  be a non-negative function on $V_{\infty}$ such
  that 
\begin{equation}\label{april3}
\sum_{u \in V_{\infty}} \sum_{n=0}^{\infty} e^{-n
  \beta} A^n_{uu}k_u < \infty.
\end{equation} 
It follows that
\begin{equation}\label{r24}
\hat{k}_v = \sum_{u \in V_{\infty}} \sum_{n=0}^{\infty} e^{-n
  \beta} A^n_{vu}k_u < \infty
\end{equation} 
for all $v \in V$, and that $\hat{k}$ is an almost $\beta$-harmonic vector.
\end{lemma}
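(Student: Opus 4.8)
The plan is to split the statement into its two assertions---finiteness of $\hat k$ everywhere, and almost $\beta$-harmonicity---and to treat the second as a formal consequence of the first. Write $G_{vu}=\sum_{n=0}^{\infty}e^{-n\beta}A^n_{vu}$ for the $\beta$-discounted Green kernel, so that $\hat k_v=\sum_{u\in V_\infty}G_{vu}k_u$. If $k\equiv 0$ there is nothing to prove, so I may assume $k_{u_1}>0$ for some $u_1\in V_\infty$; then (\ref{april3}) forces the sum (\ref{april1}) to be finite for $u_1$, hence for all $u\in NW$, and the paragraph preceding the lemma guarantees $G_{vu}<\infty$ for every $v\in V$ and every $u\in V_\infty$. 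Thus each term of $\hat k_v$ is finite, and the whole content is the convergence of the sum over $u$.

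First I would record the algebraic relations satisfied by $\hat k$, which hold as identities in $[0,\infty]$ by Tonelli since every term is non-negative. Interchanging the summations in $\sum_w A_{vw}\hat k_w$ and using $A^{n+1}_{vu}=\sum_w A_{vw}A^n_{wu}$ gives, for every $v\in V$,
\[
\sum_{w\in V}A_{vw}\hat k_w = e^{\beta}\Big(\hat k_v-\textstyle\sum_{u\in V_\infty}A^0_{vu}k_u\Big).
\]
As $A^0=I$, the subtracted quantity equals $k_v$ when $v\in V_\infty$ and $0$ otherwise. Hence $\sum_w A_{vw}\hat k_w=e^{\beta}\hat k_v$ for $v\in V\setminus V_\infty$, while $\sum_w A_{vw}\hat k_w=e^{\beta}\hat k_v-e^{\beta}k_v\le e^{\beta}\hat k_v$ for $v\in V_\infty$. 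Once finiteness is established everywhere, these are precisely (\ref{f20}) and (\ref{f20b}); note that the defect $e^{\beta}k_v$ at the infinite emitters is exactly what degrades the equality into the inequality (\ref{f20b}) there.

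For finiteness I would show that $D=\{v\in V:\hat k_v<\infty\}$ is hereditary and saturated, so that by cofinality $D$ is empty or all of $V$. Heredity: the displayed relation gives $\sum_w A_{vw}\hat k_w\le e^{\beta}\hat k_v$ for all $v$; if $\hat k_v<\infty$ then every non-negative summand is finite, so $\hat k_w<\infty$ whenever $A_{vw}\ne 0$. Saturation: if $v\in V\setminus V_\infty$ and $\hat k_w<\infty$ for every out-neighbour $w$, then, since $v$ is not an infinite emitter, the equality $e^{\beta}\hat k_v=\sum_w A_{vw}\hat k_w$ is a finite sum of finite terms, so $\hat k_v<\infty$.

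It remains to produce a single vertex $v_0$ with $\hat k_{v_0}<\infty$, and this I expect to be the main obstacle. By Lemma \ref{r12} one has $V_\infty\subseteq NW$, and by Lemma \ref{b71} the graph $G^{NW}$ is strongly connected, so finiteness at a vertex of $NW$ is a question internal to this irreducible subsystem. The natural tool is the first-passage factorisation $G_{vu}=F_{vu}(\beta)\,G_{uu}$, where $F_{vu}(\beta)=\sum_{m\ge 1}e^{-m\beta}\,{}_{u}f^{m}_{vu}$ is the discounted generating function of paths from $v$ to $u$ meeting $u$ only at their end; it recasts the goal as $\hat k_v=\sum_{u\in V_\infty}F_{vu}(\beta)\,G_{uu}k_u$, so that (\ref{april3}) delivers $\hat k_{v_0}<\infty$ the moment the weights $F_{v_0 u}(\beta)$ are controlled uniformly in $u\in V_\infty$ for a well-chosen $v_0\in NW$. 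Securing this uniform control is the delicate heart of the matter: first-passage generating functions are \emph{not} automatically bounded over an infinite $V_\infty$ (an imbalance in which it is cheap to reach $u$ but expensive to return can make $F_{v_0u}(\beta)$ grow), so it is exactly here that the strong connectivity of $G^{NW}$ and the standing assumptions (iii)--(iv) must be brought to bear---for instance via concatenation inequalities such as $G_{v_0u}G_{uv_0}\le G_{uu}$ and a reference vertex reachable from every $u$ at uniformly controlled cost. Once $\hat k_{v_0}<\infty$ is obtained, the hereditary--saturated dichotomy forces $D=V$, and the relations of the second paragraph then certify that $\hat k$ is almost $\beta$-harmonic.
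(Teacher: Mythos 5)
Your reduction is sound as far as it goes: the Tonelli computation giving $\sum_{w}A_{vw}\hat k_w=e^{\beta}\bigl(\hat k_v-k_v\,1_{V_\infty}(v)\bigr)$ as an identity in $[0,\infty]$ is correct, it does deliver (\ref{f20}) and (\ref{f20b}) once finiteness is known, and the set $\{v:\hat k_v<\infty\}$ is indeed hereditary and saturated, so by cofinality everything hinges on producing one vertex $v_0$ at which $\sum_{u\in V_\infty}G_{v_0u}k_u$ converges. That step you do not prove, and the device you sketch for closing it fails. The concatenation inequality $G_{v_0u}G_{uv_0}\le G_{uu}$ is false: at $u=v_0$ it reads $G_{v_0v_0}^2\le G_{v_0v_0}$, impossible since $G_{uu}\ge A^0_{uu}=1$ with strict inequality for $u\in NW$; in general $A^s_{uv_0}A^t_{v_0u}\le A^{s+t}_{uu}$ only yields $G_{uv_0}G_{v_0u}\le\sum_m(m+1)A^m_{uu}e^{-m\beta}$, which $G_{uu}$ does not control. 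More fundamentally, the uniform bound on the first-passage functions $F_{v_0u}(\beta)$ over $u\in V_\infty$ that your factorisation requires is precisely what sub-stochasticity supplies in the Markov-chain version of this statement (there $F\le 1$ always); for a general non-negative matrix satisfying only i)--iv) no such bound exists: already $F_{v_0u}(\beta)\ge e^{-\beta}A_{v_0u}$, and nothing prevents $A_{v_0u}$ from being unbounded over $u\in V_\infty$ while long, low-weight return paths keep every $G_{uu}$ close to $1$ and $\beta_0$ finite. Lemma \ref{b11} only gives $F_{v_0u}(\beta)\le\xi_{v_0}/\xi_u$, which degenerates as $u$ varies, so this route cannot be completed.

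Note that the paper itself offers no argument here (the proof is the single word ``Straightforward''), and that the general infinitely-supported case is never actually needed. When $V_\infty$ is finite, $\hat k_v$ is a finite sum of the finite quantities furnished by the discussion preceding the lemma, and the statement really is immediate. In the paper's applications, $k$ is either supported on a single vertex (Lemma \ref{april11}) or is the defect $k_u=\psi_u-e^{-\beta}\sum_vA_{uv}\psi_v$ of an almost $\beta$-harmonic $\psi$ (Lemma \ref{r23}); in the latter case the telescoping identity $\sum_{l=0}^{m}e^{-l\beta}\sum_wA^l_{vw}k_w=\psi_v-e^{-(m+1)\beta}\sum_wA^{m+1}_{vw}\psi_w\le\psi_v$ gives $\hat k_v\le\psi_v<\infty$ outright, with no uniform control of first-passage functions required. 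You should either prove the lemma under one of these hypotheses (finite support of $k$, or $k$ dominated as above), or add the convergence at one vertex as an explicit assumption; as written, your proposal has a gap at its acknowledged crux, and the tool you nominate to fill it is unavailable.
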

\begin{proof} Straightforward.
\end{proof}  

We say that a non-negative function $k : V_{\infty} \to
[0,\infty)$ is \emph{$\beta$-summable} when (\ref{april3}) holds. When
$V_{\infty}$ is a finite set and $k \neq 0$, this condition is equivalent to the
finiteness of the sum (\ref{april1}) for any $u \in NW$.

 \begin{lemma}\label{r23} (Riesz decomposition.) Let $\psi$ an almost $\beta$-harmonic
   vector. There is a unique pair $\phi, k$, where $\phi$ is a
   $\beta$-harmonic vector and $k : V_{\infty}  \to [0,\infty)$ is
   $\beta$-summable, such that
\begin{equation}\label{april9}
\psi = \phi + \hat{k}.
\end{equation}
\end{lemma}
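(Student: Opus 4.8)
The plan is to run the classical Riesz decomposition: read off the ``charge'' of $\psi$ from its harmonic defect, form the associated potential, and peel it off to leave a harmonic remainder. Write $P = e^{-\beta}A$ for the weighted transition operator, so that ``almost $\beta$-harmonic'' means $P\psi \leq \psi$ pointwise, with equality off $V_\infty$. Define the charge $k : V_\infty \to [0,\infty)$ by $k_v = \psi_v - (P\psi)_v$; this is non-negative precisely because $\psi$ is almost $\beta$-harmonic. Extend it by zero to a function $\tilde k$ on all of $V$, so that the relation becomes $\psi = P\psi + \tilde k$ at every vertex.

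First I would establish finiteness of the potential. Iterating $\psi = P\psi + \tilde k$ gives $\psi = P^{N+1}\psi + \sum_{n=0}^N P^n\tilde k$ for every $N$, and since every term is non-negative this forces $\sum_{n=0}^N P^n\tilde k \leq \psi$. Letting $N \to \infty$, and using that all summands are non-negative so the order of summation is immaterial, identifies $\lim_N \sum_{n=0}^N P^n\tilde k$ with the vector $\hat k$ of (\ref{r24}) and shows $\hat k \leq \psi < \infty$ pointwise; in particular $\hat k$ is everywhere finite. The same telescoping gives the identity $(I-P)\hat k = \tilde k$, i.e. the defect of $\hat k$ is exactly $e^{\beta}k_v$ at each $v \in V_\infty$ and $0$ elsewhere, which is the computation underlying Lemma~\ref{r22}. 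Setting $\phi := \psi - \hat k \geq 0$, its defect is the difference of the defects of $\psi$ and $\hat k$, which cancels at every vertex, so $\phi$ is genuinely $\beta$-harmonic and $\psi = \phi + \hat k$. Uniqueness is then forced by the same bookkeeping: if $\psi = \phi' + \hat{k'}$ with $\phi'$ $\beta$-harmonic and $k'$ $\beta$-summable, then the defect of $\psi$ equals the defect of $\hat{k'}$, namely $e^{\beta}k'_v$ on $V_\infty$; comparing with the defect computed from $k$ gives $k' = k$, hence $\hat{k'} = \hat k$ and $\phi' = \phi$.

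The step I expect to be the real obstacle is verifying that the charge $k$ is in fact $\beta$-summable, i.e. that $\sum_{u \in V_\infty}\big(\sum_{n} e^{-n\beta}A^n_{uu}\big)k_u < \infty$. Pointwise finiteness of the potential does \emph{not} by itself deliver this: the diagonal Green sums $\sum_n e^{-n\beta}A^n_{uu}$ dominate the off-diagonal ones, so a potential can be finite while its charge fails to be summable. The leverage I would use is the diagonal bound $\big(\sum_n e^{-n\beta}A^n_{uu}\big)k_u \leq \hat k_u \leq \psi_u$, obtained by keeping only the $u'=u$ term in $\hat k_u = \sum_{u'}(\sum_n e^{-n\beta}A^n_{uu'})k_{u'}$.

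In the recurrent regime this bound is decisive, and especially clean at $\beta = \beta_0$: recurrence means $\sum_n e^{-n\beta_0}A^n_{uu} = \infty$ for every $u \in NW \supseteq V_\infty$, so the inequality $\infty \cdot k_u \leq \psi_u < \infty$ forces $k_u = 0$. Thus $k \equiv 0$ is trivially $\beta_0$-summable, $\hat k = 0$, and $\psi = \phi$ is already $\beta_0$-harmonic, which is exactly the content of Theorem~\ref{r20}. For $\beta > \beta_0$ the diagonal sums converge (by the root test, since $\limsup_n (A^n_{uu})^{1/n} = e^{\beta_0}$), and summing the same bound over $u \in V_\infty$ gives $\beta$-summability once $\sum_{u \in V_\infty}\psi_u$ is controlled; this is immediate when $V_\infty$ is finite, as noted after Lemma~\ref{r22}, and handling an infinite $V_\infty$ in full generality is precisely where the structural hypotheses $V_\infty \subseteq NW$ and cofinality must be brought to bear.
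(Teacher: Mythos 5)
Your construction of the decomposition and your uniqueness argument are exactly the paper's: the paper likewise takes $k_u = \psi_u - e^{-\beta}\sum_v A_{uv}\psi_v$ on $V_{\infty}$, obtains $\phi$ as the decreasing limit $\lim_n \sum_w e^{-n\beta}A^n_{vw}\psi_w$ (which is your $\psi - \hat{k}$ after the telescoping identity), and recovers $k$ from the defect. That part of the proposal is correct and complete, and your route to uniqueness (match defects first, then potentials, then harmonic parts) is a harmless permutation of the paper's.

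The problem is the step you flag and then do not close. The lemma asserts that the extracted charge $k$ is $\beta$-summable in the sense of (\ref{april3}), i.e. $\sum_{u \in V_{\infty}}\bigl(\sum_{n} e^{-n\beta}A^n_{uu}\bigr)k_u < \infty$, and your argument delivers this only in the recurrent case (where it forces $k=0$) and when $V_{\infty}$ is finite. The diagonal bound $\bigl(\sum_n e^{-n\beta}A^n_{uu}\bigr)k_u \leq \hat{k}_u \leq \psi_u$ controls each summand separately, and summing it only produces the majorant $\sum_{u\in V_{\infty}}\psi_u$, which has no reason to converge. Nor can cofinality or $V_{\infty}\subseteq NW$ be ``brought to bear'' in any uniform way: the comparison between $\sum_n A^n_{uu}e^{-n\beta}$ and $\sum_n A^n_{v_0u}e^{-n\beta}$ supplied by strong connectivity of $G^{NW}$ carries a constant $e^{l\beta}(A^l_{v_0u})^{-1}$ depending on $u$, which degrades as $u$ runs over an infinite $V_{\infty}$, so the finiteness of $\hat{k}_{v_0}=\sum_u\bigl(\sum_n A^n_{v_0u}e^{-n\beta}\bigr)k_u$ does not transfer to the diagonal sum. (Lemma \ref{b11} gives only an upper bound $\sum_n r_{v_0u}(n)e^{-n\beta}\leq \psi_{v_0}/\psi_u$ on the first-passage factor, not the lower bound you would need.) So in the transient case with $V_{\infty}$ infinite your proposal proves $\psi=\phi+\hat{k}$ with $\hat{k}$ finite and almost $\beta$-harmonic, but not that $k$ satisfies (\ref{april3}). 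You have correctly isolated the one genuinely delicate point — the paper's own proof passes over it with ``it is then easy to see'' — but as a proof of the lemma as stated, a real argument (or a reformulation of the summability hypothesis to the finiteness of $\hat{k}$, which is all that the applications via Lemma \ref{r22} and Corollary \ref{r25} actually use) is still required.
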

\begin{proof} The arguments are
  standard, cf. e.g. 6.43 on page 170 in \cite{Wo}. $\phi$ is defined
  as the limit
$$
\phi_v = \lim_{n \to \infty} \sum_{w \in V} e^{-n \beta}A^n_{vw}\psi_w, 
$$
while 
$$
k_u = \psi_u - \sum_{v \in V} e^{-\beta} A_{uv}\psi_v.
$$ 
It is then easy to see that (\ref{april9}) holds. For uniqueness,
assume that $\phi'$ is $\beta$-harmonic, that $k': V_{\infty} \to
[0,\infty)$ is $\beta$-summable and that $\psi = \phi' +
\hat{k'}$. Then
$$
\lim_{n \to \infty} \sum_{w\in V} e^{-n\beta} A^n_{vw} \hat{k'}_w = 0
$$
for all $v \in V$ and hence $\phi'_v = \lim_{n \to \infty} \sum_{w\in V}
e^{-n\beta} A^n_{vw} \psi_w = \phi_v$ for all $v$. Thus $\phi = \phi'$
and $\hat{k} = \hat{k'}$. It follows that
\begin{equation*}
\begin{split}
k_v = \hat{k}_v - \sum_{w \in V}e^{-\beta} A_{vw}\hat{k}_w =
\psi_v - \sum_{w \in V}e^{-\beta} A_{vw}\psi_w  = \hat{k'}_v - \sum_{w \in V}e^{-\beta} A_{vw}\hat{k'}_w  = k'_v
\end{split}
\end{equation*}
for all $v\in V_{\infty}$.
%We
%leave the details to the reader.
%{\color{red}Check:  
%$$\psi_v \geq e^{-n \beta} \sum_{w \in V} A^n_{vw}\psi_w
%  \geq  e^{-n \beta} \sum_{w \in V} A^n_{vw}\sum_{w' \in V}
%  e^{-\beta}A_{ww'}\psi_{w'} = e^{-(n+1) \beta} \sum_{w \in V}
%  A^{n+1}_{vw}\psi_w,$$ showing that $\phi$ is defined. It follows
%  from dominated covergence that $\phi$ is $\beta$-harmonic. 

%Since
%$$
%\psi_v = \psi_v - \sum_{w} e^{-n\beta} A^n_{vw}\psi_w + \sum_{w}
%e^{-n\beta} A^n_{vw}\psi_w
%$$
%$$
%= \sum_{k=0}^{n-1} \sum_w e^{-k\beta} A^k_{vw} \left( \psi_w - \sum_{w'}
%  e^{-\beta} A_{ww'}\psi_{w'}\right) +  \sum_{w}
%e^{-n\beta} A^n_{vw}\psi_w
%$$
%$$
%= \sum_{k=0}^{n-1} \sum_{u\in V^{\beta}_{\infty}}  e^{-k\beta} A^k_{vu} \left( \psi_u - \sum_{w'}
%  e^{-\beta} A_{uw'}\psi_{w'}\right) +  \sum_{w}
%e^{-n\beta} A^n_{vw}\psi_w,
%$$
%the desired decomposition follows by letting $n \to \infty$. 

%Note that 
%$$
%\sum_w e^{-n\beta} A^n_{vw} \hat{k}_w = \sum_{u \in
%  V^{\beta}_{\infty}} \sum_{m=0}^{\infty} e^{-(m+n)\beta}
%A^{n+m}_{vu}k_u
%$$
%converges to $0$ when $k$ is summable. This gives the uniqueness.
%}
\end{proof}

\begin{cor}\label{r25} Assume that $A$ is recurrent. It follows that
  all almost $\beta_0$-harmonic vectors are $\beta_0$-harmonic.
\end{cor}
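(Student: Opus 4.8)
The plan is to invoke the Riesz decomposition of Lemma \ref{r23} and then use recurrence to kill its potential part. Given an almost $\beta_0$-harmonic vector $\psi$, Lemma \ref{r23} produces a unique pair $(\phi,k)$ with $\phi$ a $\beta_0$-harmonic vector, $k : V_{\infty} \to [0,\infty)$ a $\beta_0$-summable function, and $\psi = \phi + \hat{k}$. It therefore suffices to show that $\hat{k} = 0$, for then $\psi = \phi$ is $\beta_0$-harmonic.

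To see that $\hat{k}=0$ I would argue that $k$ itself vanishes identically. By the defining condition (\ref{april3}), $\beta_0$-summability of $k$ means, after interchanging the two non-negative sums,
$$
\sum_{u \in V_{\infty}} \left( \sum_{n=0}^{\infty} e^{-n\beta_0} A^n_{uu} \right) k_u < \infty .
$$
Now the inclusion $V_{\infty} \subseteq NW$ (Lemma \ref{r12}), combined with the recurrence hypothesis (\ref{r19}) and the strong connectivity of $G^{NW}$ (Lemma \ref{b71}), shows that $\sum_{n=0}^{\infty} e^{-n\beta_0} A^n_{uu} = \infty$ for every $u \in NW$, in particular for every $u \in V_{\infty}$. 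Since each $k_u \geq 0$, finiteness of the displayed sum forces $k_u = 0$ for all $u \in V_{\infty}$. Hence $k = 0$, so $\hat{k} = 0$ by (\ref{r24}), and $\psi = \phi$ is $\beta_0$-harmonic.

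I expect essentially no obstacle here: the entire content is carried by Lemma \ref{r23}, and once that decomposition is granted the only real point is that in the recurrent regime the coefficient $\sum_{n} e^{-n\beta_0} A^n_{uu}$ multiplying each $k_u$ is infinite, which annihilates $k$ immediately. The only care required is the interchange of the two non-negative sums and the observation, via Lemma \ref{r12}, that $V_{\infty} \subseteq NW$ so that recurrence indeed applies to every index $u$ occurring in (\ref{april3}).
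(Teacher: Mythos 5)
Your proof is correct and is essentially the paper's own argument: the paper likewise invokes the Riesz decomposition of Lemma \ref{r23} and observes that in the recurrent case no non-zero $k : V_{\infty} \to [0,\infty)$ can be $\beta_0$-summable, which is exactly the point you verify via $V_{\infty} \subseteq NW$ and the divergence of $\sum_{n} A^n_{uu} e^{-n\beta_0}$. You have merely written out the details that the paper leaves implicit.
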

\begin{proof} This follows from Lemma \ref{r23} since no non-zero
  function $k : V_{\infty} \to [0,\infty)$ can be $\beta_0$-summable
  in the recurrent case.
\end{proof}

With Corollary \ref{r25} in place, the proof of Theorem \ref{r20} can
be copied from the work of Vere-Jones, \cite{V}. We introduce for $v,w \in V$ and $n = 0,1,2,
\dots$, the numbers $r_{vw}(n)$ such that $r_{vw}(0) = 0, \ r_{vw}(1)
= A_{vw}$ and
$$
r_{vw}(n+1) = \sum_{u \neq w} A_{vu}r_{uw}(n) 
$$
when $n \geq 1$.

\begin{lemma}\label{f80} (Equation (4) in \cite{V}.) Assume $NW \neq \emptyset$ and that $\beta >
  \beta_0$. Then
$$
\sum_{n=0}^{\infty} A^n_{vw}e^{-n\beta} = I_{vw} +
\left(\sum_{n=1}^{\infty} r_{vw}(n)e^{-n\beta} \right) \left(
  \sum_{n=0}^{\infty} A^n_{ww} e^{-n\beta} \right) .
$$
for all $v,w \in NW$.
\end{lemma}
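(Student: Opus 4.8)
The plan is to recognise $r_{vw}(n)$ as a \emph{first-passage} weight --- the total weight of those length-$n$ paths from $v$ to $w$ that reach $w$ for the first time only at their last step --- and to prove the stated formula as the generating-function form of the first-passage (renewal) decomposition. Accordingly, the heart of the argument is the combinatorial identity
\begin{equation}
A^n_{vw} \;=\; \sum_{m=1}^{n} r_{vw}(m)\, A^{n-m}_{ww}, \qquad n \geq 1, \tag{$\ast$}
\end{equation}
valid for all $v,w \in V$; decomposing a path from $v$ to $w$ according to the step $m$ at which it first hits $w$ gives the initial first-passage factor $r_{vw}(m)$ and the free return factor $A^{n-m}_{ww}$.

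I would establish $(\ast)$ by induction on $n$, matching the given first-step recursion for $r$. The case $n=1$ is immediate, since $r_{vw}(1)=A_{vw}$ and $A^0_{ww}=1$. For the inductive step I would expand $A^{n+1}_{vw}=\sum_{u} A_{vu} A^n_{uw}$, isolate the term $u=w$ (which contributes $A_{vw}A^n_{ww}=r_{vw}(1)A^n_{ww}$), apply the inductive hypothesis to $A^n_{uw}$ in the remaining terms, and then use $\sum_{u\neq w} A_{vu}\,r_{uw}(m)=r_{vw}(m+1)$ to reindex and recover $\sum_{m=1}^{n+1} r_{vw}(m) A^{n+1-m}_{ww}$. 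Since all entries of every power $A^k$ are finite and all terms are non-negative, the reordering of the (finitely many in $m$) sums is justified by Tonelli and causes no difficulty.

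With $(\ast)$ available, the lemma follows by passing to power series in $z=e^{-\beta}$. Because $\limsup_n (A^n_{vw})^{1/n}=e^{\beta_0}$ for $v,w\in NW$, the series $\sum_n A^n_{vw}z^n$ and $\sum_n A^n_{ww}z^n$ have radius of convergence $e^{-\beta_0}$, and the hypothesis $\beta>\beta_0$ places $z=e^{-\beta}<e^{-\beta_0}$ strictly inside it, so both converge. Moreover the $m=n$ term of $(\ast)$ gives $r_{vw}(n)\leq A^n_{vw}$, so $\sum_n r_{vw}(n)z^n$ converges as well. Multiplying $(\ast)$ by $z^n$ and summing over $n\geq 1$, the left-hand side becomes $\bigl(\sum_{n\geq 0} A^n_{vw}z^n\bigr)-I_{vw}$, while the right-hand side is the Cauchy product $\bigl(\sum_{m\geq 1} r_{vw}(m)z^m\bigr)\bigl(\sum_{k\geq 0} A^k_{ww}z^k\bigr)$; the interchange of summation is again automatic because every term is non-negative. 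Rearranging gives exactly the asserted identity.

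The one point requiring genuine care is the convergence bookkeeping: each series must be known to be finite before it is manipulated, and this is precisely the role of the hypothesis $\beta>\beta_0$, which keeps $z=e^{-\beta}$ inside the common radius of convergence $e^{-\beta_0}$. Because every quantity in sight is non-negative, there are no conditional-convergence subtleties and the Cauchy-product step is free; the substantive content is the elementary identity $(\ast)$, which is the renewal equation underlying Vere-Jones' Equation (4).
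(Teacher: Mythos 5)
Your proof is correct and follows the same route as the paper: the paper's proof consists precisely of the first-passage identity $A^n_{vw}=\sum_{s=1}^{n} r_{vw}(s)A^{n-s}_{ww}$ followed by the product rule for power series. You simply supply the induction and the convergence bookkeeping that the paper leaves as an observation.
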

\begin{proof} By using the product rule for power series the stated
  equality follows from the observation that for $n \geq 1$,
$A^n_{vw} = \sum_{s=1}^n r_{vw}(s)A^{n-s}_{ww}$.
\end{proof}

\begin{lemma}\label{b11} (Lemma 4.1 in \cite{V}.) Assume $\xi :V \to [0,\infty)$
  satisfies that
$$
\sum_{w\in V} A_{vw}\xi_w \leq e^{\beta} \xi_v
$$
for all $v$. Assume $\xi_{v_0} \neq 0$ for some vertex $v_0 \in V$. It follows that
$$
\sum_{n =1}^{\infty} r_{vv_0}(n)e^{-n\beta} \leq \frac{\xi_v}{\xi_{v_0}}
$$
for all $v$.
\end{lemma}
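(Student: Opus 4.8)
The plan is to prove the stronger finite-sum estimate
\[
\xi_{v_0}\sum_{n=1}^{N} r_{vv_0}(n)\, e^{-n\beta} \leq \xi_v
\]
for every $v \in V$ and every $N \geq 1$ by induction on $N$, and then to let $N \to \infty$. Since $\xi_{v_0}$ is non-negative and non-zero, it is strictly positive, so dividing by it at the end yields the claim. I would work with the partial sums $S_N(v) = \sum_{n=1}^N r_{vv_0}(n) e^{-n\beta}$ throughout.

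For the base case $N = 1$ I would use $r_{vv_0}(1) = A_{vv_0}$, so the asserted inequality reads $\xi_{v_0} A_{vv_0} e^{-\beta} \leq \xi_v$. This is immediate from the hypothesis $\sum_{w} A_{vw}\xi_w \leq e^{\beta}\xi_v$ upon discarding on the left-hand side every term except the one with $w = v_0$, using non-negativity of $A$ and $\xi$.

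For the inductive step, the crucial computation splits off the $n=1$ term, shifts the index $n \mapsto n+1$ in the remainder, and invokes the taboo recursion $r_{vv_0}(n+1) = \sum_{u \neq v_0} A_{vu}\, r_{uv_0}(n)$ to obtain
\[
S_{N+1}(v) = A_{vv_0}\, e^{-\beta} + e^{-\beta}\sum_{u \neq v_0} A_{vu}\, S_N(u).
\]
Multiplying through by $\xi_{v_0}$ and applying the induction hypothesis $\xi_{v_0} S_N(u) \leq \xi_u$ to each term of the sum over $u \neq v_0$ gives
\[
\xi_{v_0} S_{N+1}(v) \leq A_{vv_0}\xi_{v_0}\, e^{-\beta} + e^{-\beta}\sum_{u \neq v_0} A_{vu}\,\xi_u = e^{-\beta}\sum_{u \in V} A_{vu}\,\xi_u \leq \xi_v,
\]
where the final inequality is once more the standing hypothesis. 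This closes the induction.

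I do not expect a deep obstacle here; the argument is a clean induction. The one step requiring care is the reindexing: one must correctly peel off the $n=1$ term, shift the summation, and then recognise that the taboo recursion expresses $r_{vv_0}(n+1)$ through the $r_{uv_0}(n)$ with $u \neq v_0$. It is precisely this exclusion $u \neq v_0$ that makes the induction hypothesis applicable term-by-term while keeping every quantity non-negative, so that $S_N(v)$ increases monotonically to $\sum_{n=1}^\infty r_{vv_0}(n) e^{-n\beta}$ and the limit $N \to \infty$ of the finite estimates delivers the stated bound.
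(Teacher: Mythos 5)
Your proposal is correct and follows essentially the same route as the paper's own proof: an induction on the partial sums $S_N(v)=\sum_{n=1}^N r_{vv_0}(n)e^{-n\beta}$, started from $r_{vv_0}(1)=A_{vv_0}$ and closed via the taboo recursion $r_{vv_0}(n+1)=\sum_{u\neq v_0}A_{vu}r_{uv_0}(n)$ together with the standing inequality $\sum_w A_{vw}\xi_w\leq e^{\beta}\xi_v$. The only difference is cosmetic: you build $S_{N+1}$ up from $S_N$, whereas the paper estimates $\xi_v/\xi_{v_0}$ downward to $S_{N+1}$.
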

\begin{proof} We prove by induction in $N$ that 
\begin{equation}\label{b12}
\sum_{n=1}^N
  r_{vv_0}(n)e^{-n\beta} \leq \frac{\xi_v}{\xi_{v_0}}
\end{equation}
for all $N$ and all $v$. To start the
induction note that 
$$
\xi_v \geq e^{-\beta} \sum_{w\in V} A_{vw} \xi_w \geq
e^{-\beta} A_{vv_0}\xi_{v_0} = \xi_{v_0}
r_{vv_0}(1)e^{-\beta}.
$$ 
Assume then that (\ref{b12}) holds for all $v$. It follows
that
\begin{equation*}
\begin{split}
&\frac{\xi_v}{\xi_{v_0}} \geq e^{-\beta} \sum_{w \in V}
A_{vw}\frac{\xi_w}{\xi_{v_0}} = e^{-\beta} \left( \sum_{w \neq v_0} A_{vw}
\frac{\xi_w}{\xi_{v_0}} + A_{vv_0}\right) \\
& \geq   e^{-\beta} \sum_{n=1}^N  \sum_{w\neq v_0}
  A_{vw}r_{wv_0}(n)e^{-n\beta} + e^{-\beta} A_{vv_0} \\
&= \sum_{n=1}^N
  r_{vv_0}(n+1) e^{-(n+1)\beta} + e^{-\beta} r_{vv_0}(1) =  \sum_{n=1}^{N+1}
  r_{vv_0}(n)e^{-n\beta} .   
\end{split}
\end{equation*} 
\end{proof}

\emph{Proof of Theorem \ref{r20}:}  In view Corollary \ref{r25} we
must prove the existence and essential uniqueness of a non-zero
$\beta_0$-harmonic vector. Existence: Fix a vertex $w \in NW$. It follows from Fatou's lemma that
$$
\lim_{\beta \downarrow \beta_0} \sum_{n=0}^{\infty}
A^n_{ww}e^{-n\beta} = \infty.
 $$
Since
$$
\sum_{n=0}^{\infty} A^n_{ww}e^{-n\beta} = 1 +
\left(\sum_{n=1}^{\infty} r_{ww}(n)e^{-n\beta} \right) \left(
  \sum_{n=0}^{\infty} A^n_{ww} e^{-n\beta} \right) .
$$
for all $\beta > \beta_0$ by Lemma \ref{f80}, it follows that
$$
\lim_{\beta \downarrow \beta_0} \sum_{n=1}^{\infty} r_{ww}(n)e^{-n\beta} = 1.
$$ 
By the monotone convergence theorem this leads to the conclusion
that
\begin{equation}\label{hov}
\sum_{n=1}^{\infty} r_{ww}(n)e^{-n \beta_0} = 1.
\end{equation}
Now note that
\begin{equation}\label{i10}
\begin{split}
&\sum_{u\in V} A_{vu} \left( \sum_{n=1}^{N} r_{uw}(n)e^{-n \beta_0} \right)\\
&=  \sum_{n=1}^{N} \sum_{u \neq w} A_{vu}r_{uw}(n)e^{-n \beta_0} +
A_{vw} \sum_{n=1}^{N} r_{ww}(n) e^{-n\beta_0}\\
& = \sum_{n=1}^{N} r_{vw}(n+1)e^{-n\beta_0} + A_{vw}
\sum_{n=1}^{N} r_{ww}(n) e^{-n\beta_0} \\
& = e^{\beta_0} \sum_{n=1}^{N+1} r_{vw}(n)e^{-n\beta_0} + A_{vw} \left(
  \sum_{n=1}^{N} r_{ww}(n) e^{-n\beta_0} - 1\right) .
\end{split}
\end{equation}
It follows from (\ref{i10}) and (\ref{hov}) that
$$
\left\{v \in V: \  \sum_{n=1}^{\infty} r_{vw}(n)e^{-n\beta_0} <
  \infty \right\}
$$
is both hereditary and saturated, and hence equal to $V$ since $G$
is cofinal. By letting $N$ tend to infinity in
(\ref{i10}) we see that
$$
\xi_v = \sum_{n=1}^{\infty} r_{vw}(n)e^{-n\beta_0}
$$
defines a $\beta_0$-harmonic vector $\xi$.

Uniqueness: Let $\xi'$ be a non-zero $\beta_0$-harmonic vector for $A$
 such that
$\xi'_{w} = 1$. We must show that $\xi' = \xi$. It follows from
  Lemma \ref{b11} that $\xi'_v \ \geq \ \xi_v$ for all $v \in V$.
By comparing this to the fact that
\begin{equation*}\label{b76}
e^{n\beta_0}  = \sum_{v \in V} A^n_{wv} \xi_v  = \sum_{v \in V} A^n_{wv}\xi'_v
\end{equation*}
for all $n \in \mathbb N$, we conclude that $\xi'_v = \xi_v$ for every vertex $v \in V$ with the property
that $A^n_{wv} \neq 0$ for some $n$. In particular, $\xi$ and $\xi'$
agree on $NW$ since $w \in NW$, and $G^{NW}$is strongly connected by
Lemma \ref{b71}. As $NW$ is also hereditary by the same lemma, it
follows from Lemma \ref{b65} that $\xi' = \xi$.

\qed

\subsection{When $NW$ is finite}\label{finiteNW}

In this case there are no infinite emitters in $G$ and hence all almost
$\beta$-harmonic vectors are $\beta$-harmonic.

\begin{lemma}\label{r28} Assume that $NW$ is non-empty but
  finite. Then $A$ is recurrent and there are no non-zero $\beta$-harmonic vectors for
  $A$ when $\beta > \beta_0$.
\end{lemma}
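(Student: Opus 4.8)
The plan is to reduce everything to the finite irreducible matrix obtained by restricting $A$ to $NW$ and then invoke the Perron--Frobenius theorem. First I would record two structural facts. Since $NW$ is hereditary by Lemma~\ref{b71}, every edge emitted from a vertex of $NW$ lands in $NW$, so every finite path in $G$ that starts in $NW$ stays in $NW$. Hence, writing $B$ for the restriction of $A$ to $NW \times NW$, we have $A^n_{vw} = B^n_{vw}$ for all $v,w \in NW$ and all $n$, and $B$ is a \emph{finite} non-negative matrix which is irreducible because $G^{NW}$ is strongly connected. In particular $e^{\beta_0} = \limsup_n (A^n_{vv})^{1/n} = \limsup_n (B^n_{vv})^{1/n}$ equals the Perron eigenvalue $\lambda$ of $B$. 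I would also note (as already observed before the lemma) that there are no infinite emitters: an infinite emitter lies in $NW$ by Lemma~\ref{r12}, but by hereditariness it would emit to infinitely many vertices of $NW$, contradicting $\# NW < \infty$; thus $V_{\infty} = \emptyset$ and ``almost $\beta$-harmonic'' and ``$\beta$-harmonic'' coincide.

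For recurrence I would use the positive Perron eigenvectors. Let $r$ and $\ell$ be strictly positive right and left eigenvectors of $B$ for $\lambda$. The Doob-type normalization $P_{vw} = \lambda^{-1} r_v^{-1} B_{vw} r_w$ is then a finite irreducible \emph{stochastic} matrix on $NW$ satisfying $P^n_{vv} = \lambda^{-n} B^n_{vv}$. Since a finite irreducible stochastic matrix is recurrent, $\sum_n P^n_{vv} = \infty$, and translating back gives $\sum_{n} A^n_{vv} e^{-n\beta_0} = \sum_n B^n_{vv}\lambda^{-n} = \infty$ for $v \in NW$, i.e. $A$ is recurrent. Equivalently, one can pair the identity $\ell B^n r = \lambda^n(\ell\cdot r)$ and sum over $n$ after dividing by $\lambda^n$; this forces $\sum_n B^n_{vw}\lambda^{-n} = \infty$ for some pair, whence for the diagonal by strong connectivity.

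For the second assertion, suppose $\xi$ is a non-zero $\beta$-harmonic vector with $\beta > \beta_0$. By Lemma~\ref{b16} we have $\xi_v > 0$ for every $v$, and for $v \in NW$ the harmonic equation reads $\sum_{w \in NW} B_{vw}\xi_w = e^{\beta}\xi_v$, because $A_{vw}=0$ when $v\in NW$ and $w\notin NW$. Thus the restriction $\xi|_{NW}$ is a strictly positive eigenvector of $B$ for the eigenvalue $e^{\beta}$. Pairing with the positive left eigenvector $\ell$ gives $e^{\beta}(\ell\cdot\xi|_{NW}) = \ell \cdot (B\,\xi|_{NW}) = \lambda(\ell\cdot\xi|_{NW})$, and since $\ell\cdot\xi|_{NW} > 0$ this forces $e^{\beta} = \lambda = e^{\beta_0}$, contradicting $\beta > \beta_0$.

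The routine reductions (matching powers on $NW$, ruling out infinite emitters) are immediate, and the only real content is the Perron--Frobenius input. The main obstacle I would watch is the recurrence claim, where the matrix $B$ may be periodic: the cleanest route is the stochastic normalization above, which sidesteps any direct analysis of the asymptotics of $B^n_{vv}/\lambda^n$, so one should be careful to invoke recurrence of finite irreducible \emph{stochastic} matrices rather than pointwise estimates on the entries.
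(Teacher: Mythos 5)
Your proof is correct, and while it follows the same overall reduction as the paper --- everything is pulled back to the finite irreducible matrix $B = A|_{NW}$, using that $NW$ is hereditary (so $A^n$ and $B^n$ agree on $NW\times NW$), that $V_\infty=\emptyset$, and that $e^{\beta_0}$ is the spectral radius of $B$ --- the two key inputs you use are genuinely different from the paper's. For recurrence the paper avoids positive eigenvectors entirely: it takes a complex eigenvector for an eigenvalue of modulus $e^{\beta_0}$ (which exists for any matrix, since the spectral radius is attained) and observes that $A^n_{vw}e^{-n\beta_0}$ cannot tend to $0$ for all $v,w\in NW$, so some series $\sum_n A^n_{vw}e^{-n\beta_0}$ diverges and solidarity on the strongly connected block finishes; your Doob normalization $P_{vw}=\lambda^{-1}r_v^{-1}B_{vw}r_w$ instead imports recurrence of finite irreducible Markov chains, which is cleaner (and, as you note, immune to periodicity issues) but requires the strictly positive right Perron eigenvector. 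For ruling out $\beta>\beta_0$ the paper argues by growth rates, extracting from $e^{n\beta}\xi_v\le K\max_w A^n_{vw}$ a subsequence along which $n$-th roots give $e^{\beta}\le e^{\beta_0}$, whereas you pair with the positive left eigenvector $\ell$ to force $e^{\beta}=\lambda$ exactly; your version is shorter and isolates the standard fact that a strictly positive eigenvector of a finite irreducible non-negative matrix can only belong to the Perron root. Both routes are complete; the paper's uses slightly less machinery (only the existence of a peripheral eigenvalue), yours is the more transparent probabilistic/Perron--Frobenius argument.
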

\begin{proof} Note that $e^{\beta_0}$ is the spectral radius of
  $A|_{NW}$. It follows from linear algebra that there is a non-zero vector $\psi : NW \to
  \mathbb C$ and a complex number $\lambda \in \mathbb C, \ |\lambda|
  =1$, such that $\sum_{w \in NW} A^n_{vw}\psi_w = e^{n \beta_0}
  \lambda^n \psi_v$ for all $n \in \mathbb N$ and all $v \in NW$. In particular,
  the sequence $A^n_{vw}e^{-n \beta_0}$ can not converge to zero for
  all $v,w \in NW$. It follows that $A$ must be recurrent. Assume that $\xi$ is a non-zero $\beta$-harmonic vector. Let $v \in
NW$. Then
$$
e^{n\beta} \xi_v = \sum_{w \in NW} A^n_{vw} \xi_w \leq K\max_{w \in NW} A^n_{vw} ,
$$
where $K = (\# NW) (\max_{w \in NW} \xi_w)$. There is therefore a
vertex $w \in NW$ such that
$$
e^{n_i\beta} \xi_v \leq K A^{n_i}_{vw}
$$
for an increasing sequence $n_1 < n_2 < n_3 < \dots$ of natural
numbers. It follows that 
$$
e^{\beta} = \lim_i \left(e^{n_i \beta}\xi_v\right)^{\frac{1}{n_i}}
\leq \limsup_n \left(K A^n_{vw}\right)^{\frac{1}{n}} = e^{\beta_0} ,
$$
proving that $\beta \leq \beta_0$. 
\end{proof}

\begin{cor}\label{r29} Assume that $NW$ is non-empty and finite. It
  follows that there are no non-zero $\beta$-harmonic vectors for $A$
  unless $\beta = \beta_0$.  There is a non-zero $\beta_0$-harmonic vector which unique up to multiplication by
  scalars. 
\end{cor}
\begin{proof} The first statement follows from Lemma \ref{r28}, and the
  second from Lemma \ref{r28} and Theorem \ref{r20}.
\end{proof}

\section{The transient case}

In this section we consider the non-recurrent cases. Specifically,
we assume that
\begin{equation}\label{r220}
\sum_{n=0}^{\infty} A^n_{vw}
e^{-n \beta} < \infty 
\end{equation}
for all $v,w \in V$, and refer to this as the
\emph{transient} case. The following lemma shows that the transient case
covers all the non-recurrent cases. 

%$NW \neq \emptyset$, but
%\begin{equation}\label{r31}
%\sum_{n=0}^{\infty} A^n_{vv} e^{-n \beta_0} < \infty
%\end{equation}
%for one and hence all $v \in NW$. Thanks to Corollary \ref{r29} there is no reason to consider the case
%where $NW$ is non-empty but finite. We assume therefore throughout
%this section that $NW = \emptyset$ or that $NW$ is infinite and that
%(\ref{r31}) holds for all $v \in NW$. 

\begin{lemma}\label{r30} Assume that $NW = \emptyset$ or that $\sum_{n=0}^{\infty} A^n_{uu}
e^{-n \beta} < \infty$ for some $u \in NW$. It follows that
(\ref{r220}) holds for all $v,w \in V$.
\end{lemma}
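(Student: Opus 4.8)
The plan is to fix the target column $w \in V$ and reduce the claim to the single statement that the diagonal series $\sum_{n=0}^{\infty} A^n_{ww} e^{-n\beta}$ converges. Indeed, exactly as in the proof of Lemma \ref{i5}, the set
\[
\mathcal C_w = \left\{ v \in V : \ \sum_{n=0}^{\infty} A^n_{vw} e^{-n\beta} < \infty \right\}
\]
is hereditary and saturated, so by cofinality it is either empty or all of $V$. Consequently, to obtain (\ref{r220}) for every $v$ and this fixed $w$ it suffices to exhibit a single vertex lying in $\mathcal C_w$, and the natural candidate is $w$ itself. Thus the whole lemma follows once we verify, for every $w \in V$, that $\sum_{n=0}^{\infty} A^n_{ww} e^{-n\beta} < \infty$.

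For this I would split according to whether $w$ is non-wandering. If $w \notin NW$, then by definition there is no loop at $w$, so $A^n_{ww} = 0$ for all $n \geq 1$ and the diagonal series collapses to $A^0_{ww} = 1 < \infty$; note that under the first hypothesis $NW = \emptyset$ this subcase covers every vertex, so the lemma is immediate there. If instead $w \in NW$, then $NW \neq \emptyset$ and we are in the second hypothesis, which furnishes some $u \in NW$ with $\sum_{n=0}^{\infty} A^n_{uu} e^{-n\beta} < \infty$.

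The remaining task, and the main point of the argument, is to transport convergence of the diagonal series from the given vertex $u$ to an arbitrary $w \in NW$. Here I would use that $G^{NW}$ is strongly connected (Lemma \ref{b71}): choose finite paths realising $A^p_{uw} > 0$ and $A^q_{wu} > 0$. Concatenating a $u \to w$ path, a loop $w \to w$ of length $n$, and a $w \to u$ path gives the entrywise estimate $A^{p+n+q}_{uu} \geq A^p_{uw} A^n_{ww} A^q_{wu}$. Multiplying by $e^{-(p+n+q)\beta}$ and summing over $n$, the left-hand side is dominated by $\sum_{m=0}^{\infty} A^m_{uu} e^{-m\beta} < \infty$, while the right-hand side equals the positive constant $A^p_{uw} A^q_{wu} e^{-(p+q)\beta}$ times $\sum_{n=0}^{\infty} A^n_{ww} e^{-n\beta}$; hence the latter is finite. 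With the diagonal bound established for every $w$, the reduction in the first paragraph completes the proof. The only delicate point is this strong-connectivity comparison, but it is routine and does not interact with the presence of infinite emitters, since the entire argument proceeds one column at a time.
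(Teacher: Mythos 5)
Your proof is correct and is in substance the paper's own argument run in the direct rather than the contrapositive direction: the paper assumes (\ref{r220}) fails, applies Lemma \ref{i5} to force $\sum_{n=0}^{\infty} A^n_{ww}e^{-n\beta}=\infty$, and concludes that $w\in NW$ with the recurrent alternative holding, contradicting the hypothesis. The two ingredients you use --- the hereditary and saturated set $\mathcal C_w$ combined with cofinality, and the strong connectivity of $G^{NW}$ to transport finiteness of the diagonal series from $u$ to $w$ --- are exactly the ones the paper relies on.
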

\begin{proof} If (\ref{r220}) fails for some $v,w$, it follows from
  Lemma \ref{i5} that $\sum_{n=0}^{\infty} A^n_{ww}
e^{-n \beta} = \infty$. In particular, $w \in NW$ and $\beta =
\beta_0$. This is a recurrent case, contrary to assumption.
\end{proof}

We say that $A$ is row-finite when there are no infinite emitter in $G$,
i.e. when 
$$
\# \left\{w \in V: \ A_{vw} > 0 \right\} < \infty
$$ 
for all
$v \in V$.

\begin{thm}\label{r33} Assume that (\ref{r220}) holds for all $v,w \in
  V$, and let $\beta \in \mathbb R$.

\begin{enumerate}
\item[a)] Assume that $NW = \emptyset$. Then $A$ is row-finite and
  there is a non-zero $\beta$-harmonic vector for $A$.
\item[b)] Assume that $NW$ is non-empty but finite. There are no
  non-zero almost $\beta$-harmonic vector for $A$. 
\item[c)] Assume that $NW$ is
  infinite. There is a
non-zero almost $\beta$-harmonic vector if and only if $\beta \geq
\beta_0$. 
\end{enumerate}
\end{thm}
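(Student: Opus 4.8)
The plan is to treat the three parts separately, using throughout that the standing hypothesis (\ref{r220}) forces $\beta \geq \beta_0$ whenever $NW \neq \emptyset$: for $v \in NW$ one has $\limsup_n (A^n_{vv})^{1/n} = e^{\beta_0}$, so convergence of $\sum_n A^n_{vv} e^{-n\beta}$ gives $e^{\beta_0 - \beta} \leq 1$ by the root test. In particular the ``only if'' direction of c) is automatic, and nothing new is needed for it beyond the observation already made in Section~\ref{the setting}. For b) I would first note that a finite non-empty $NW$ admits no infinite emitters, since an infinite emitter lies in $NW$ and, as $NW$ is hereditary (Lemma \ref{b71}), would send edges into infinitely many distinct vertices of $NW$; hence $V_\infty = \emptyset$ and ``almost $\beta$-harmonic'' means ``$\beta$-harmonic''. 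By Lemma \ref{r28} such an $A$ is recurrent, so $\sum_n A^n_{vv} e^{-n\beta_0} = \infty$ and (\ref{r220}) fails at $\beta = \beta_0$; combined with $\beta \geq \beta_0$ this gives $\beta > \beta_0$, and Lemma \ref{r28} then rules out non-zero $\beta$-harmonic vectors, proving b).

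The real content is the existence of a non-zero almost $\beta$-harmonic vector in a) and in the ``if'' direction of c). I would split according to whether $V_\infty$ is empty. When $V_\infty \neq \emptyset$ (possible only in c)), fix an infinite emitter $u_0$ and let $k$ be the indicator of $\{u_0\}$ on $V_\infty$. Transience makes $\sum_n e^{-n\beta} A^n_{u_0 u_0}$ finite, so $k$ is $\beta$-summable and Lemma \ref{r22} produces the almost $\beta$-harmonic vector $\hat k$, which is non-zero since $\hat k_{u_0} \geq A^0_{u_0 u_0} = 1$.

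The remaining and harder situation is $V_\infty = \emptyset$, i.e.\ $A$ row-finite; this covers all of a) (there $V_\infty \subseteq NW = \emptyset$, which is also why $A$ is row-finite as asserted) and the row-finite part of c). Here almost $\beta$-harmonic coincides with $\beta$-harmonic, and I would build such a vector by a limiting (Martin-type) argument. Write $g(v,u) = \sum_{n=0}^{\infty} A^n_{vu} e^{-n\beta}$, finite by (\ref{r220}); summing the recursion for $A^{n+1}$ gives $\sum_w A_{vw} g(w,u) = e^{\beta}(g(v,u) - I_{vu})$, so $g(\cdot,u)$ is $\beta$-harmonic off $u$. Fix a base vertex $o$ and let $H_o$ be $o$ together with all vertices reachable from $o$ by a finite path; $H_o$ is hereditary. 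For each $v \in H_o$ choose a path $o \to v$ of length $m$ with $A^m_{ov} > 0$; then $A^{m+n}_{ou} \geq A^m_{ov} A^n_{vu}$ yields the Harnack bound $g(v,u) \leq e^{m\beta} (A^m_{ov})^{-1} g(o,u)$, so the normalised kernels $\kappa_u(v) = g(v,u)/g(o,u)$ (well defined as $g(o,u) > 0$ for $u \in H_o$) are bounded on $H_o$ uniformly in $u$. Next I would verify that $H_o$ is infinite: as there are no sinks and $H_o$ is hereditary, a finite $H_o$ would carry a cycle, hence a non-wandering vertex; this is absurd in a) where $NW = \emptyset$, and in c) it forces $NW \subseteq H_o$ (since $G^{NW}$ is strongly connected by Lemma \ref{b71}), contradicting the infinitude of $NW$. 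Choosing distinct $u_k \in H_o$ and extracting, by a diagonal argument, a subsequence along which $\kappa_{u_k}(v) \to \xi_v$ for every $v \in H_o$, the Harnack bound and row-finiteness let me take the limit in $\sum_w A_{vw} \kappa_{u_k}(w) = e^{\beta}(\kappa_{u_k}(v) - I_{v u_k}/g(o,u_k))$; since $u_k$ eventually leaves every finite set the singular term vanishes, so $\xi$ is $\beta$-harmonic on $H_o$ with $\xi_o = 1$. Finally Lemma \ref{b65} extends $\xi|_{H_o}$ to a non-zero almost $\beta$-harmonic vector on all of $V$.

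I expect the limiting step to be the main obstacle. Three points need care: establishing the Harnack inequality that turns the normalised kernels into a normal family; guaranteeing that the source $u_k$ can be pushed to infinity, which is precisely why $H_o$ must be shown to be infinite; and justifying the interchange of the limit with the sum over the out-neighbours of $v$, where row-finiteness (equivalently $V_\infty = \emptyset$) is exactly what is used. By contrast, the bookkeeping for b) and for the necessity in c) is routine once transience is seen to pin down the admissible range of $\beta$.
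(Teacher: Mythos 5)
Your proposal is correct, and its skeleton coincides with the paper's: parts a) and b) are dispatched exactly as in the text (via Lemma \ref{r12} for row-finiteness, and via Lemma \ref{r28} together with the observation that recurrence at $\beta_0$ is incompatible with (\ref{r220}), forcing $\beta>\beta_0$), and the existence proof is the same Martin-kernel limit over the hereditary set $H_{v_0}$ with the Harnack bound (\ref{i8}), followed by extension through Lemma \ref{b65}. The genuine divergence is in how the infinite emitters are handled. The paper runs a single limiting argument covering a) and c) simultaneously and invokes Fatou's lemma to obtain the inequality $\sum_u A_{vu}\eta_u\le e^{\beta}\eta_v$ at vertices of $V_{\infty}$, where the sum over out-neighbours is infinite and cannot simply be interchanged with the limit. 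You instead split off the case $V_{\infty}\neq\emptyset$ and produce an explicit solution there: the potential $\hat{k}_v=\sum_{n}e^{-n\beta}A^n_{vu_0}$ of a point mass at an infinite emitter $u_0$, which is finite by (\ref{r220}), non-zero since $\hat{k}_{u_0}\ge 1$, and almost $\beta$-harmonic because the deficit in the eigenvalue equation occurs only at $u_0\in V_{\infty}$, where only the inequality is required. This is exactly the mechanism of Lemma \ref{r22} (and of the extremal vectors (\ref{r202}) in Corollary \ref{add}), so it is fully consistent with the paper's framework; what it buys is that your remaining limit argument takes place only in the row-finite situation, where the interchange of limit and sum is a finite-sum triviality and no Fatou step is needed. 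The cost is a case split and the fact that your witness in case c) with $V_{\infty}\neq\emptyset$ is never $\beta$-harmonic, which is immaterial for the statement. Your pigeonhole argument that $H_{v_0}$ is infinite (a finite hereditary sink-free set must carry a cycle, hence meets $NW$, hence contains all of $NW$ by strong connectivity of $G^{NW}$) is a harmless variant of the paper's appeal to Lemma \ref{r1}.
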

\begin{proof} In case a), it follows from Lemma \ref{r12} that $A$ is
  row-finite. Case b) follows from Lemma \ref{r28} and Corollary
  \ref{r29}. It remains therefore only to show that there is a non-zero almost
  $\beta$-harmonic vector in case a) and c). To this end, fix $v_0
  \in V$ and set
$$
H_{v_0} = \left\{ w \in V : \ A^l_{v_0w} \neq 0 \ \text{for some} \ l
  \in \mathbb N \right\} .
$$
Consider a vertex $v \in H_{v_0}$ and choose $l \in \mathbb N$ such
that $A^l_{v_0v} \neq 0$. Then
\begin{equation}\label{i7}
\begin{split}
&A^l_{v_0v}\sum_{n=0}^{\infty} A^n_{vw}e^{-n\beta} \leq  \sum_{n=0}^{\infty} A^{l+n}_{v_0w}e^{-n\beta}   \leq e^{l\beta} \sum_{n=0}^{\infty} A^{n}_{v_0 w} e^{-n\beta}  .
\end{split}
\end{equation}
It follows that
\begin{equation}\label{i8}
\frac{\sum_{n=0}^{\infty} A^n_{vw}e^{-n\beta}}{\sum_{n=0}^{\infty}
  A^n_{v_0w}e^{-n\beta}} \leq \frac{e^{l\beta}}{A^l_{v_0v}} 
\end{equation}
for all $w \in H_{v_0}$. Note that $H_{v_0}$ is infinite. When $NW$ is
infinite this follows since $NW \subseteq H_{v_0}$ by Lemma \ref{r1}. When
$NW = \emptyset$ it follows because there are no sinks by
assumption. It follows therefore from
(\ref{i8}) that there is a sequence $\{w_k\}$ of distinct elements in $H_{v_0}$ such that the limit
$$
\eta_v = \lim_{k\to \infty} \frac{\sum_{n=0}^{\infty} A^n_{vw_k}e^{-n\beta}}{\sum_{n=0}^{\infty}
  A^n_{v_0w_k}e^{-n\beta}} 
$$
exists for all $v \in H_{v_0}$. Note that $\eta_{v_0} = 1$. By letting $N$ tend to $\infty$ in
(\ref{i2}) we find that
\begin{equation}\label{f54}
%\begin{split}
\sum_{u\in V} A_{vu} \sum_{n=0}^{\infty} A^n_{uw} e^{-n\beta} =
%\sum_{n=0}^{\infty} \sum_{u\in V} A_{vu}A^n_{uw} e^{-n \beta}  = e^{\beta}
%\sum_{n=0}^{\infty}  A^{n+1}_{v}  e^{-(n+1) \beta}  \\
e^{\beta}
\sum_{n=0}^{\infty}  A^{n}_{vw}  e^{-n\beta}  - e^{\beta}I_{vw}.
%\end{split}
\end{equation}
It follows from (\ref{f54}) that $\sum_{u \in V} A_{vu} \eta_u = e^{\beta}\eta_v$ for all $v
\in H_{v_0} \backslash V_{\infty}$, while Fatou's lemma shows that
$\sum_{u \in V} A_{vu} \eta_u \leq e^{\beta}\eta_v$ for all $v
\in H_{v_0}$. The existence of a non-zero almost ${\beta}$-harmonic vector
for $A$
follows then from Lemma \ref{b65}. 
\end{proof}

When $G$ is strongly connected and $A$ is row-finite, c) in Theorem
\ref{r33} is a result of Pruitt, \cite{P}. When $A$ is not row-finite it can happen, also when $G$ is
strongly connected, that there are no
non-zero $\beta$-harmonic vectors for any $\beta \geq\beta_0$ or that
they exist for some $\beta \geq \beta_0$ and not for others. See
\cite{Th} for such examples.

\subsection{The structure of the positive eigenvectors}\label{pos evec}

We denote the set of almost $\beta$-harmonic vectors for $A$ by
$E(A,\beta)$. Assume that $E(A,\beta) \neq 0$. For a given vertex
$v_0 \in V$ we set
$$
E(A,\beta)_{v_0} = \left\{ \xi \in E(A,\beta): \ \xi_{v_0} = 1 \right\}.
$$
Equipped with the product topology $\mathbb R^V$ is a locally convex
real vector space, and $E(A,\beta)$ is a closed convex cone in
$\mathbb R^V$. It follows from Lemma \ref{b16} that $E(A,\beta)_{v_0}$
is a base for $E(A,\beta)$, and we aim now to show that
$E(A,\beta)_{v_0}$ is a compact Choquet simplex and to obtain an integral representation of the elements
in $E(A,\beta)_{v_0}$, analogous to the Poisson-Martin integral
representation for the harmonic functions of a countable state Markov chain,
cf. e.g. Theorem 7.45 in \cite{Wo}. For this purpose we consider the partial ordering $\geq$ in
$E(A,\beta)$ defined such that $\xi \geq \mu \ \Leftrightarrow \ \xi -
\mu \in E(A,\beta)$.

\begin{lemma}\label{i19} $E(A,\beta)$
is a lattice cone; i.e. every pair of elements $\xi, \eta \in E(A,\beta)$ have a least upper bound $\xi \vee \eta \in E(A,\beta)$ and a greatest lower bound $\xi \wedge \eta \in E(A,\beta)$ for the order $\geq$.
\end{lemma}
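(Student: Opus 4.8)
The plan is to reduce the lattice question to two much simpler ordered cones by means of the Riesz decomposition of Lemma \ref{r23}. Write $P$ for the operator $(Pf)_v = e^{-\beta}\sum_{w} A_{vw} f_w$, so that a non-negative vector $f$ is almost $\beta$-harmonic exactly when $Pf \leq f$ everywhere, with equality off $V_{\infty}$. Let $\mathcal H^+$ denote the cone of non-negative $\beta$-harmonic vectors and $\mathcal S^+$ the cone of non-negative $\beta$-summable functions on $V_{\infty}$, each equipped with the pointwise order. By Lemma \ref{r23} the map $\xi \mapsto (\phi,k)$ sending $\xi$ to the two components of its Riesz decomposition $\xi = \phi + \hat k$ is a bijection of $E(A,\beta)$ onto $\mathcal H^+ \times \mathcal S^+$; the formulas in the proof of Lemma \ref{r23} show it is linear, and injectivity and surjectivity are exactly the uniqueness and existence in that lemma. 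Since the product of two lattice cones is again a lattice cone for the product order, it suffices to prove that this bijection is an order isomorphism and that each of the two factors is a lattice cone.

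First I would record the order dictionary. For $\xi, \mu \in E(A,\beta)$ the difference $\xi - \mu$ has Riesz components $\phi^\xi - \phi^\mu$ and $k^\xi - k^\mu$ by linearity, so $\xi \geq \mu$ (that is, $\xi - \mu \in E(A,\beta)$) holds if and only if $\phi^\xi \geq \phi^\mu$ pointwise and $k^\xi \geq k^\mu$ pointwise. The forward direction is immediate, because the harmonic part of an element of $E(A,\beta)$ is non-negative and $k_u = \xi_u - (P\xi)_u \geq 0$ for $u \in V_{\infty}$; the converse uses that a non-negative harmonic vector lies in $E(A,\beta)$ together with the fact that $\widehat{k^\xi - k^\mu} \in E(A,\beta)$ by Lemma \ref{r22}. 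The point worth stressing is that the cone order is strictly finer than the pointwise order at the infinite emitters: it also demands that the deficiency $e^{\beta} k_v$ of $\xi$ dominate that of $\mu$, and it is precisely the $\mathcal S^+$ factor that bookkeeps this extra condition.

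For the factor $\mathcal S^+$ the lattice operations are just the pointwise minimum and maximum, which remain $\beta$-summable because $\min(k,k') \leq k$ and $\max(k,k') \leq k + k'$. The substance lies in the factor $\mathcal H^+$. For $\phi, \phi' \in \mathcal H^+$ I would take the greatest lower bound to be $\lim_n P^n \min(\phi,\phi')$: here $\min(\phi,\phi')$ is excessive, so $P^n \min(\phi,\phi')$ decreases and, being dominated by $\min(\phi,\phi')$, converges by dominated convergence to a harmonic vector which one checks is the greatest harmonic minorant. For the least upper bound I would take $\lim_n P^n \max(\phi,\phi')$: now $\max(\phi,\phi')$ is subharmonic, so the sequence increases, and the essential point is that it is bounded above by the harmonic vector $\phi + \phi'$, using $\max(\phi,\phi') \leq \phi + \phi'$ and $P^n(\phi+\phi') = \phi + \phi'$; monotone convergence then yields a harmonic limit, which is the least harmonic majorant. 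I expect this existence of the least harmonic majorant to be the main obstacle, precisely because a monotone increasing sequence of harmonic vectors converges only thanks to domination by $\phi + \phi'$, which in turn relies on the non-negativity built into $\mathcal H^+$. Assembling the pieces, $\xi \wedge \eta$ and $\xi \vee \eta$ are obtained by transporting the lattice operations of $\mathcal H^+ \times \mathcal S^+$ back through the Riesz isomorphism.
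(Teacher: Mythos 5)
Your proof is correct, and it takes a genuinely different route from the paper's. The paper works directly with the pointwise minimum $\nu_v=\min\{\xi_v,\mu_v\}$ and declares $\xi\wedge\mu$ to be $\lim_{n\to\infty}\sum_{w}e^{-n\beta}A^n_{vw}\nu_w$, i.e.\ the greatest $\beta$-harmonic minorant of $\nu$; you instead split each element via the Riesz decomposition of Lemma \ref{r23}, verify that $\xi\mapsto(\phi^{\xi},k^{\xi})$ is an order isomorphism onto $\mathcal H^+\times\mathcal S^+$, and take lattice operations componentwise. The two constructions agree on the harmonic parts, but they differ when the potential parts are nonzero, and there your version is the one that actually matches the cone order $\xi\geq\mu\Leftrightarrow\xi-\mu\in E(A,\beta)$. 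The paper's candidate is always $\beta$-harmonic, and its verification that this candidate is the \emph{greatest} lower bound rests on the identity $\varphi_v=\sum_{w}e^{-n\beta}A^n_{vw}\varphi_w$ for a competing lower bound $\varphi$, which is valid only when $\varphi$ is $\beta$-harmonic; taking $\xi=\mu=\varphi=\hat k$ for a nonzero $\beta$-summable $k$ (possible in the transient case when $V_{\infty}\neq\emptyset$) makes the point, since the infimum of $\{\hat k,\hat k\}$ is $\hat k$ while the iterated averages of $\hat k$ tend to $0$. Your extra term $\widehat{\min(k^{\xi},k^{\mu})}$ supplies exactly the missing potential contribution, and your remark that the cone order is strictly finer than the pointwise order at the infinite emitters is precisely the observation that makes this necessary. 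What your approach buys, then, is correctness of the lattice operations in the non-row-finite case, at the price of invoking Lemmas \ref{r22} and \ref{r23}; in the row-finite case, where every almost $\beta$-harmonic vector is $\beta$-harmonic and the cone order coincides with the pointwise order, the two arguments collapse to the same thing.
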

\begin{proof} To find the greatest lower bound $\xi \wedge \mu$ of $\xi$ and $\mu$, set $\nu_v = \min \{\xi_v,\mu_v\}$. Then $\sum_{w\in V} e^{-\beta} A_{vw}\nu_w \leq \sum_{w\in V} e^{-\beta} A_{vw} \xi_w
= \xi_v$ and $\sum_{w\in V} e^{-\beta} A_{vw} \nu_w \leq \sum_{w\in V} e^{-\beta} A_{vw} \mu_w
=\mu_v,$ 
proving that 
$$
\sum_{w\in V} e^{-\beta} A_{vw} \nu_w \leq \nu_v .
$$
It follows by iteration that
$$
\sum_{w\in V} e^{-(n+1)\beta} A^{n+1}_{vw} \nu_w \leq \sum_{w\in V} e^{-n \beta} A^n_{vw} \nu_w \leq \nu_v
$$
for all $v$ and all $n$. We can therefore consider the limit
$$
\psi_v = \lim_{n \to \infty} \sum_{w\in V} e^{-n\beta} A^n_{vw} \nu_w ,
$$
and observe that $\psi \in E(A,\beta)$ while $\psi \leq \mu, \ \psi \leq
\xi$, i.e. $\psi$ is a lower bound for $\xi$ and $\mu$ in
$E(A,\beta)$. To see that it is the greatest such, consider $\varphi
\in E(A,\beta)$ such that $\varphi \leq \mu, \ \varphi
\leq \xi$. Then $\varphi \leq \nu$ and
$$
\varphi_v = \sum_{w \in V} e^{-n \beta} A^n_{vw} \varphi_w \leq
\sum_{w \in V}
e^{-n\beta} A^n_{vw} \nu_w ,
$$
for all $v,n$, and hence $\varphi \leq \psi$. This proves that $\psi$
is the greatest lower bound for $\xi$ and $\mu$, i.e. $\psi = \xi
\wedge \mu$.

The least upper bound $\xi \vee \mu$ is then given by 
$$
\xi \vee \mu = \xi + \mu - \xi \wedge \mu .
$$
Indeed, $\xi \vee \mu$ is clearly an upper bound and if $\psi$ is
another such, we find that $\xi + \mu \leq (\psi + \mu) \wedge
(\psi + \xi) = \psi + \xi \wedge \mu$ and hence $\xi \vee \mu \leq
\psi$. This shows that $\xi \vee \mu$ is the least upper bound, as claimed.
\end{proof}

Fix now a vertex $v_0 \in V$. Set 
$$
H_{v_0} = \left\{ w \in V : \ A^n_{v_0w} \neq 0 \ \text{for some} \ n
  \in \mathbb N \right\} ,
$$
and note that $H_{v_0}$ is a hereditary set of vertexes. For every $v
\in V$ we consider the function $K^{\beta}_v : H_{v_0} \to [0,\infty[$
defined by
$$
K^{\beta}_v(w) = \frac{\sum_{n=0}^{\infty} A^n_{vw} e^{-n
    \beta}}{\sum_{n=0}^{\infty} A^n_{v_0w} e^{-n \beta}} .
$$

\begin{lemma}\label{i20} Let $H \subseteq V$ be a non-empty hereditary subset of
  vertexes. For each $v \in V$ there is a $m_v
  \in \mathbb N$ such that 
$$
A^l_{vw} \neq 0 , \ l \geq m_v   \ \Rightarrow \ w \in H.
$$
\end{lemma}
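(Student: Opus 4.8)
The plan is to follow the pattern of the earlier lemmas and realise the set of vertexes enjoying the asserted property as a hereditary and saturated subset of $V$, so that cofinality forces it to be all of $V$. Explicitly, I would introduce
$$
D = \left\{ v \in V : \ \text{there is} \ m_v \in \mathbb N \ \text{with} \ \left(A^l_{vw} \neq 0, \ l \geq m_v \right) \ \Rightarrow \ w \in H \right\},
$$
so that the assertion is exactly $D = V$. Since $H$ is hereditary, every path issuing from a vertex of $H$ stays in $H$, whence $A^l_{vw} \neq 0 \Rightarrow w \in H$ already with $m_v = 0$; thus $H \subseteq D$ and in particular $D \neq \emptyset$. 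It therefore suffices to prove that $D$ is hereditary and saturated.

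For heredity, let $v \in D$ with associated bound $m_v$ and take an edge $e \in E$ with $s(e) = v$ and $r(e) = w'$. Prepending $e$ turns any path of length $l$ from $w'$ to a vertex $w$ into a path of length $l+1$ from $v$ to $w$, so $A^l_{w'w} \neq 0$ implies $A^{l+1}_{vw} \neq 0$. Hence, with $m_{w'} := \max\{m_v - 1, 0\}$, the relation $A^l_{w'w} \neq 0, \ l \geq m_{w'}$ gives $l+1 \geq m_v$ and therefore $w \in H$, showing $w' \in D$.

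The decisive step is saturation, and here the absence of sinks and infinite emitters is essential. Let $v \in V \backslash V_{\infty}$ with $r(s^{-1}(v)) \subseteq D$. Because $G$ has no sinks and $v$ is not an infinite emitter, $v$ emits only finitely many edges, with terminal vertexes $w_1, \dots, w_k \in D$ carrying bounds $m_{w_1}, \dots, m_{w_k}$. Every path of length $l \geq 1$ leaving $v$ starts with one of these edges and continues with a path of length $l-1$ from the corresponding $w_i$, so $A^l_{vw} \neq 0$ entails $A^{l-1}_{w_iw} \neq 0$ for some $i$. Putting $m_v := 1 + \max_{1 \leq i \leq k} m_{w_i}$, which is finite precisely because the out-degree of $v$ is finite, we conclude that $l \geq m_v$ forces $w \in H$, i.e. $v \in D$. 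I expect this to be the main obstacle: for an infinite emitter the maximum could be infinite and the argument would break down, but saturation is only required at vertexes outside $V_{\infty}$, so the reasoning is legitimate, and cofinality then yields $D = V$.
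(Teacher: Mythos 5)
Your proof is correct and takes essentially the same route as the paper: where you define the set $D$ of vertexes with the asserted property and verify directly that it is hereditary and saturated, the paper builds the same set as the union of the recursively defined chain $H_0 = H$, $H_{n+1} = \{v : A_{vw} \neq 0 \Rightarrow w \in H_n\} \cup H_n$, and in both cases cofinality forces the set to be all of $V$. You also correctly identify the one point where the standing hypotheses matter, namely that saturation need only be checked at vertexes of finite out-degree so that the maximum of the bounds $m_{w_i}$ is finite.
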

\begin{proof} Define subsets $H_i \subseteq V$ recursively
  such that $H_0 = H$ and
$$
H_{n+1} = \left\{ v \in V : \ A_{vw} \neq 0 \ \Rightarrow \ w \in H_n
\right\} \cup H_n.
$$
Then $H_0 \subseteq H_1 \subseteq H_2
\subseteq \cdots$ is a sequence of hereditary subsets, and the
union $\bigcup_n H_n$ is both
hereditary and saturated. It is therefore all of $V$ since $G$ is
cofinal by assumption. When $v \in H_k$ we can use $m_v = k$. 
\end{proof}

%\begin{cor}\label{r22} $V_{\infty} \subseteq NW \subseteq H_{v_0}$.
%\end{cor}
%\begin{proof}  The inclusion $V_{\infty} \subseteq NW$ comes from
%  Lemma \ref{r12}. We can therefore assume that $NW \neq \emptyset$. By applying Lemma \ref{i20} with $H = H_{v_0}$, it
%  follows that $NW \cap H_{v_0} \neq \emptyset$. Since $G^{NW}_A$ is
%  strongly connected and $H_{v_0}$ is hereditary, $NW \subseteq
%  H_{v_0}$.
%\end{proof} 

\begin{lemma}\label{i21} Let $\beta \in \mathbb R$ and assume that
  $\sum_{n=0}^{\infty} A^n_{vw}e^{-n\beta} < \infty$ for all $v,w \in
  V$. For every vertex $v \in V$ there are positive
  numbers
  $l_v,L_v$ and a finite set $F_v \subseteq H_{v_0}$ such that
  $K^{\beta}_v(w) \leq L_v$ for all $w \in H_{v_0}$ and $0 < l_v \leq
  K^{\beta}_v(w)$ for all $w \in H_{v_0} \backslash F_v$.
\end{lemma}
\begin{proof} Consider first a vertex $v \in H_{v_0}$. There is an $l \in \mathbb N$ such
that $A^l_{v_0v} \neq 0$. Set $N_v =
{\left(A^l_{v_0v}\right)}^{-1}e^{l\beta}$ and note that the
calculation (\ref{i7}) gives the upper bound
%\begin{equation*}
%\begin{split}
%&A^l_{v_0v}K^{\beta}_v(w) = \sum_{n=1}^{\infty}
%A^l_{v_0v}A^n_{vw}e^{-n\beta} \left( \sum_{n=1}^{\infty}
%  A^n_{v_0w} e^{-n\beta}\right)^{-1}\\
%&\leq \sum_{n=1}^{\infty} A^{l+n}_{v_0w}e^{-n\beta}  \left(
%  \sum_{n=1}^{\infty} A^n_{v_0w} e^{-n\beta}\right)^{-1} \\
%& =e^{l\beta} \sum_{n=1}^{\infty} A^{n+l}_{v_0 w} e^{(-l-n)\beta}  \left(
%  \sum_{n=1}^{\infty} A^n_{v_0w} e^{-n\beta}\right)^{-1} \\
%&\leq e^{l\beta} \sum_{n=1}^{\infty} A^{n}_{v_0 w} e^{-n\beta}  \left(
%  \sum_{n=1}^{\infty} A^n_{v_0w} e^{-n\beta}\right)^{-1} = \ e^{l\beta} .
%\end{split}
%\end{equation*}
\begin{equation}\label{f60}
K^{\beta}_v(w) \leq N_v
\end{equation}
for all $w \in H_{v_0}$. Consider then a vertex $v \in V \backslash H_{v_0}$. By
Lemma \ref{i20} there is an $m_v \in \mathbb N$ such that every path
in $G$
of length $m_v$ emitted from $v$ terminates in $H_{v_0}$. Let $\Gamma$
denote the set of finite paths $\mu$ in $G$ starting at $v$ and terminating in $H_{v_0}$,
and such that $r(\mu)$ is the only vertex in $\mu$ which is in
$H_{v_0}$. Then $|\mu| \leq m_v$ for all $\mu \in \Gamma$. Now note
that
\begin{equation*}\label{r421}
V_{\infty} \subseteq NW \subseteq H_{v_0},
\end{equation*}
where the first inclusion comes from Lemma \ref{r12} and the second
follows from Lemma \ref{r1}. Since $V_{\infty} \subseteq H_{v_0}$ and
$v \notin H_{v_0}$, the set $\Gamma$ has only finitely many elements. For every $\mu = e_1e_2 \cdots e_{|\mu|} \in \Gamma$, set
$$
W(\mu) = A_{s(e_1)r(e_1)}A_{s(e_2)r(e_2)} \cdots
A_{s(e_{|\mu|})r(e_{|\mu|})} e^{-|\mu|\beta} .
$$
Then
\begin{equation}\label{f61}
\begin{split}
&\sum_{n=0}^{\infty} A^n_{vw} e^{-n\beta}  =\sum_{\mu \in \Gamma}
W(\mu)\  \sum_{n \geq |\mu|} A^{n-|\mu|} _{r(\mu) w} e^{-(n-|\mu|)\beta}  \\
&  = \sum_{\mu \in \Gamma}
W(\mu)\  K^{\beta}_{r(\mu)}(w)  \left(
  \sum_{n=0}^{\infty} A^n_{v_0w} e^{-n\beta} \right)  \\
&  \leq  \left(
  \sum_{n=0}^{\infty} A^n_{v_0w} e^{-n\beta} \right) \sum_{\mu \in \Gamma}
W(\mu)\  N_{r(\mu)}  
\end{split}
\end{equation}
for all $w \in H_{v_0}$.
It follows that we can use $L_v = N_v$ when $v \in H_{v_0}$ and $L_v =  \sum_{\mu \in \Gamma}
W(\mu)\  N_{r(\mu)}$, otherwise.

To establish the existence of $l_v$ and $F_v$, assume for a
contradiction that for all $\epsilon > 0$ there are
infinitely many elements $w \in H_{v_0}$ such that
$$
K^{\beta}_v(w) \leq \epsilon .
$$
We can then construct a sequence $\{w_k\}$ of distinct elements in
$H_{v_0}$ such that
\begin{equation}\label{k1}
K^{\beta}_v(w_k) \leq \frac{1}{k} 
\end{equation}
for all $k$. The calculation
(\ref{f54}) shows that
%\begin{equation}\label{f54}
%\begin{split}
%&\sum_u A_{vu} \sum_{n=0}^{\infty} A^n_{uw} e^{-n\beta} =
%\sum_{n=0}^{\infty} \sum_u A_{vu}A^n_{uw} e^{-n \beta}  = e^{\beta}
%\sum_{n=0}^{\infty}  A^{n+1}_{v}  e^{-(n+1) \beta}  \\
%&= e^{\beta}
%\sum_{n=0}^{\infty}  A^{n}_{vw}  e^{-n\beta}  - \delta_{vw} 
%\end{split}
%\end{equation}
%and hence 
\begin{equation}\label{f64}
\sum_{u\in V} A_{v'u}K^{\beta}_u(w_k) = e^{\beta}K^{\beta}_{v'}(w_k) -  e^{\beta}\left(
  \sum_{n=0}^{\infty} A^n_{v_0w_k}e^{-n\beta}\right)^{-1} I_{v'w_k} 
\end{equation}
for all $v'\in V$ and all $k \in \mathbb N$. It follows from (\ref{f64}) that a condensation
point $\xi =
\left(\xi_u\right)_{u \in V}$ in  $\prod_{u \in V} \left[0,L_u\right]$
of the sequence
$$
\left( K_u^{\beta}(w_k) \right)_{u \in
  V}, \ k \in \mathbb N,
$$
is an almost $\beta$-harmonic vector for $A$ with $\xi_{v_0} =
1$. But (\ref{k1}) implies that $\xi_v= 0$ which is
impossible by Lemma \ref{b16}. This contradiction shows that there must be an $l_v > 0$
and a finite set $F_v \subseteq H_{v_0}$ such that $l_v \leq
K^{\beta}_v(w)$ for all $w \in H_{v_0} \backslash F_v$.

\end{proof}

It follows from Lemma \ref{i21}
that $K^{\beta}_v$ is a bounded function on $H_{v_0}$,
i.e. $K^{\beta}_v \in l^{\infty}\left(H_{v_0}\right)$ for all $v \in V$. We denote by $1_{w}$ the characteristic function of an
element $w \in H_{v_0}$. Let $\mathcal A_{\beta}$ be the $C^*$-subalgebra of
$l^{\infty}\left(H_{v_0}\right)$ generated by $K^{\beta}_v,  v \in V$,
and the functions $1_w,  w \in H_{v_0}$, and let $\mathcal B_{\beta}$ be the image of $\mathcal A_{\beta}$ in the
quotient algebra
$$
l^{\infty}\left(H_{v_0}\right)/c_0\left(H_{v_0} \backslash
  V_{\infty}\right). 
$$
Here $c_0\left(H_{v_0} \backslash
  V_{\infty}\right)$ denotes the ideal in $l^{\infty}\left(H_{v_0}\right)$
  consisting of the elements $f : H_{v_0} \to \mathbb C$ with the
  property that for all $\epsilon > 0$ there is a finite subset $F
  \subseteq H_{v_0} \backslash V_{\infty}$ such that $|f(w)| \leq
  \epsilon \ \forall w \in H_{v_0} \backslash F$. In particular,
  $f(V_{\infty}) = \{0\}$ when $V_{\infty} \neq \emptyset$ and $f \in c_0\left(H_{v_0} \backslash
  V_{\infty}\right)$.

Note that it
follows from Lemma \ref{i21} that for every $v \in V$ there is a
finite subset $F_v \subseteq H_{v_0}$ such that 
$$
K^{\beta}_v \ + \sum_{w
  \in F_v} 1_w
$$ 
is invertible in
$l^{\infty}(H_{v_0})$. Thus $\mathcal A_{\beta}$ and $\mathcal B_{\beta}$ are both
unital $C^*$-algebras. Since they are also separable, the set
$X_{\beta}$ of characters of $\mathcal B_{\beta}$ is a compact
metric space and $\mathcal B_{\beta}$ can be identified with
$C\left(X_{\beta}\right)$ via the Gelfand transform. Since evaluation
at a vertex $w \in V_{\infty}$ annihilates $c_0\left(H_{v_0}
  \backslash V_{\infty}\right)$, each element of $V_{\infty}$ gives
rise to character on $\mathcal B_{\beta}$ and hence an element of
$X_{\beta}$. It follows that there is a canonical inclusion
\begin{equation}\label{r206}
V_{\infty}   \subseteq  X_{\beta}.
\end{equation}
For each $v\in V$ the function $w \mapsto K_v^{\beta}(w)$ is
an element of $\mathcal A_{\beta}$ and its image in $\mathcal B_{\beta}$ is a continuous
function on $X_{\beta}$ which we also denote by
$K_v^{\beta}$. Let $M\left(X_{\beta}\right)$ denote the set of Borel probability measures on
$X_{\beta}$.  We consider $M\left(X_{\beta}\right)$ as a compact convex
set in the weak*-topology obtained by considering the measures
as elements of the dual of $C(X_{\beta})$.

%We aim to relate $M(X_{\beta})$ to
%$E(A,\beta)_{v_0}$ and the set $\mathcal K_{\beta}$ of condensation
%points in $\prod_{v
%  \in V} \left[0,L_v\right]$ of the set
%$\left\{\left(K_v^{\beta}(w)\right)_{v \in V} : \ w \in H_{v_0}
%\right\}$. In more detail, $\mathcal K_{\beta}$
%consists of the elements $k = \left(k_v \right)_{v \in V}$ of
%$\mathbb R^{V}$ with the property that for any finite set $F
%\subseteq V$ and any $\epsilon > 0$, the set
%$$
%\left\{ w \in H_{v_0} : \ \left|K_v^{\beta}(w) - k_v\right| \leq \epsilon , \ v
%  \in F \right\}
%$$
%is infinite. $\mathcal K_{\beta}$ is a compact subset of $\prod_{v
%  \in V} \left[0,L_v\right] \subseteq \mathbb R^V$. 

\begin{thm}\label{Martin} Let $\beta \in \mathbb R$. Assume that $NW =
\emptyset$ or that $NW$ is infinite. Assume also that $\sum_{n=0}^{\infty} A^n_{vw} e^{-n\beta} < \infty$
for all $v,w \in V$. Then $E(A,\beta)_{v_0}$ is a non-empty compact metrizable Choquet
  simplex and there is a
  continuous affine surjection $I : M(X_{\beta}) \to
  E(A,\beta)_{v_0}$ defined such that
$$
I(m)_v = \int_{X_{\beta}} K^{\beta}_v \ dm .
$$
\end{thm}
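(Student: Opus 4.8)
The plan is to prove the four assertions in turn---non-emptiness, compactness and metrizability, the simplex property, and finally surjectivity of $I$, which is by far the hardest point. Throughout I write $G(v,w)=\sum_{n=0}^{\infty}A^n_{vw}e^{-n\beta}$ for the Green function, which is finite by hypothesis.

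Non-emptiness is immediate from Theorem \ref{r33}: when $NW\neq\emptyset$ the summability hypothesis (\ref{r220}) forces $\beta\geq\beta_0$ (otherwise $\sum_n A^n_{vv}e^{-n\beta}=\infty$ for $v\in NW$, contradicting (\ref{r220}) at $(v,v)$), so case a) or c) applies and yields a non-zero almost $\beta$-harmonic vector, which normalises at $v_0$ by Lemma \ref{b16}. Metrizability is clear since $V$ is countable, so $\mathbb R^V$ and its subset $E(A,\beta)_{v_0}$ are metrizable. For compactness I would show $E(A,\beta)_{v_0}$ is closed and coordinatewise bounded. Closedness follows because $E(A,\beta)$ is a closed cone and $\{\xi_{v_0}=1\}$ is a closed hyperplane. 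For the bound I would reuse the estimates from the proof of Lemma \ref{i21}: for $\xi\in E(A,\beta)_{v_0}$ and $v\in H_{v_0}$, choosing $n$ with $A^n_{v_0v}\neq0$ gives $A^n_{v_0v}\xi_v\leq\sum_w A^n_{v_0w}\xi_w\leq e^{n\beta}$, so $\xi_v\leq N_v$; for $v\notin H_{v_0}$ I expand the harmonic identity along the finite path set $\Gamma$ of Lemma \ref{i21}, noting that all intermediate vertices avoid $V_\infty\subseteq H_{v_0}$ and hence are finite emitters where equality holds, to obtain $\xi_v=\sum_{\mu\in\Gamma}W(\mu)\xi_{r(\mu)}\leq L_v$. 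Thus $E(A,\beta)_{v_0}\subseteq\prod_v[0,L_v]$ is compact.

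Next I would verify that $I$ maps into $E(A,\beta)_{v_0}$ and is affine and continuous. Affineness and weak*-continuity are automatic since each $K^\beta_v\in C(X_\beta)$, and $I(m)_{v_0}=\int K^\beta_{v_0}\,dm=1$ because $K^\beta_{v_0}\equiv1$. To see $I(m)$ is almost $\beta$-harmonic I integrate (\ref{f64}) against $m$. For $v\notin V_\infty$ the sum over emitted edges is finite and the defect $w\mapsto G(v_0,w)^{-1}I_{vw}$, supported at the single point $v\in H_{v_0}\backslash V_\infty$, lies in $c_0(H_{v_0}\backslash V_\infty)$ and so vanishes in $\mathcal B_\beta=C(X_\beta)$; hence $\sum_u A_{vu}K^\beta_u=e^{\beta}K^\beta_v$ in $C(X_\beta)$, giving equality after integrating. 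For $v\in V_\infty$ the partial sums increase to a function $\leq e^{\beta}K^\beta_v$, and monotone convergence gives $\sum_u A_{vu}I(m)_u\leq e^{\beta}I(m)_v$. The simplex property I would deduce structurally: $E(A,\beta)$ is a lattice cone by Lemma \ref{i19}, and by Lemma \ref{b16} every non-zero ray of the cone meets the compact base $E(A,\beta)_{v_0}$ exactly once; by the standard characterisation of simplices (a compact convex set is a Choquet simplex precisely when the cone it generates is a lattice in its own ordering), $E(A,\beta)_{v_0}$ is a metrizable Choquet simplex.

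The main obstacle is surjectivity of $I$. I would first reduce to the genuinely harmonic case via the Riesz decomposition of Lemma \ref{r23}: write $\xi=\phi+\hat k$ with $\phi$ $\beta$-harmonic and $k$ $\beta$-summable on $V_\infty$. The potential part is already in the image, since $\hat k_v=\sum_{u\in V_\infty}G(v,u)k_u=\sum_{u\in V_\infty}K^\beta_v(u)\,G(v_0,u)k_u=\int K^\beta_v\,dm_{\mathrm{pot}}$, where $m_{\mathrm{pot}}=\sum_{u\in V_\infty}G(v_0,u)k_u\,\delta_u$ is a finite measure on $V_\infty\subseteq X_\beta$ of total mass $\hat k_{v_0}$. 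It then remains to represent the harmonic part $\phi$ by a measure on the boundary $X_\beta\backslash V_\infty$ of total mass $\phi_{v_0}$; since $\phi_{v_0}+\hat k_{v_0}=\xi_{v_0}=1$, the sum of the two measures is a probability measure representing $\xi$. For $\phi$ I would run the classical Poisson--Martin argument (Theorem 7.45 in \cite{Wo}): fix an exhaustion $V_1\subseteq V_2\subseteq\cdots$ of $V$ by finite sets, balayage $\phi$ onto the complements to produce probability measures $m_n$ on $X_\beta$ with $\int K^\beta_v\,dm_n=\phi_v/\phi_{v_0}$ for $v\in V_n$, extract a weak*-convergent subnet $m_n\to m$ using compactness of $M(X_\beta)$, and pass to the limit using continuity of each $K^\beta_v$ on $X_\beta$ together with (\ref{f64}). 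The delicate points---precisely where the cofinal, non-row-finite setting departs from the strongly connected chain case---are to arrange the balayage so that the exit mass landing on the infinite emitters $V_\infty$ is accounted for correctly (this is exactly why $V_\infty$ is glued into $X_\beta$ in (\ref{r206}) and why one quotients only by $c_0(H_{v_0}\backslash V_\infty)$), and to confirm that the limiting measure carries full mass $\phi_{v_0}$ rather than losing mass into the interior. Once this is verified, $m_{\mathrm{harm}}+m_{\mathrm{pot}}$ represents $\xi$ and surjectivity follows.
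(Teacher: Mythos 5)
Most of what you write is sound. Non-emptiness via Theorem \ref{r33}, the lattice-cone argument for the simplex property, and the verification that $I$ lands in $E(A,\beta)_{v_0}$ all match the paper. Your proof of compactness is a genuine (and correct) alternative: the paper obtains compactness only as a by-product of the identity $I(M(X_{\beta}))=E(A,\beta)_{v_0}$, whereas you prove it directly by showing $E(A,\beta)_{v_0}$ is closed and sits inside $\prod_v[0,L_v]$, using the bound $\xi_v\leq N_v$ on $H_{v_0}$ and the expansion $\xi_v=\sum_{\mu\in\Gamma}W(\mu)\xi_{r(\mu)}$ off $H_{v_0}$ (valid because every intermediate vertex avoids $V_{\infty}\subseteq H_{v_0}$ and is therefore a finite emitter where equality holds). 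That is a perfectly good self-contained route to the Choquet simplex statement.

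The gap is surjectivity of $I$, which is the crux of the theorem. Your reduction via the Riesz decomposition of Lemma \ref{r23} is fine, and the potential part is indeed $I$ of the finite measure $\sum_{u\in V_{\infty}}\bigl(\sum_n A^n_{v_0u}e^{-n\beta}\bigr)k_u\,\delta_u$ on $V_{\infty}\subseteq X_{\beta}$. But for the harmonic part you only gesture at a balayage along an exhaustion and then explicitly defer ``the delicate points'' --- and those are exactly the content of the proof. Note in particular that the intermediate measures your balayage produces are supported on finitely many vertices of $H_{v_0}\backslash V_{\infty}$, and such vertices are \emph{not} points of $X_{\beta}$: evaluation at $w\in H_{v_0}\backslash V_{\infty}$ does not annihilate $c_0\left(H_{v_0}\backslash V_{\infty}\right)$ and hence does not descend to a character of $\mathcal B_{\beta}$. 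So you cannot take weak* limits in $M(X_{\beta})$; you must work with functionals on $\mathcal A_{\beta}$ and then prove that the limit functional kills $c_0\left(H_{v_0}\backslash V_{\infty}\right)$, i.e.\ that no mass survives on the ``interior''. This is precisely the step you leave unverified. The paper closes it with Sawyer's truncation: setting $\xi^n_v=\min\{\xi_v,\,nK^{\beta}_v(w_n)\}$ makes $\xi^n$ an exact potential, yielding the identity (\ref{b57}) $\xi^n_v=\sum_{w\in H_{v_0}}K^{\beta}_v(w)h_n(w)$ and norm-$\leq 1$ functionals $\mu_n$ on $\mathcal A_{\beta}$; the observation that $k_{n_l}(w)=0$ eventually for every finite emitter $w\in H_{v_0}$ (because $\xi^{n_l}$ agrees with $\xi$ at $w$ and at its finitely many successors for large $l$) is what forces the limit functional to factor through $\mathcal B_{\beta}$, after which the Riesz representation theorem and Lemma \ref{b65} finish the proof. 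This argument handles the potential and harmonic parts simultaneously, so the Riesz decomposition buys you nothing here. To complete your proof you would either have to import this truncation argument or supply the corresponding estimates for your balayage measures (full exit mass from each finite set, and vanishing of the limit on each $1_w$, $w\in H_{v_0}\backslash V_{\infty}$); as written, the theorem's main claim is not established.
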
  
\begin{proof} %It follows from Theorem \ref{b73} that
 % $E(A,\beta)_{v_0}$ is not empty. Furthermore, as we pointed out
 % after Lemma \ref{l1}, the set $V$ can not be
 % finite.% Indeed, the existence of a positive $e^{\beta}$-eigenvector
%  $\xi$ implies that
%$$
%\sum_{w \in W} \sum_{n=0}^{\infty} A^n_{vw}e^{-n\beta} \xi_w =
%\sum_{n=0}^{\infty}\sum_{w \in W} A^n_{vw}e^{-n\beta} \xi_w =
%\sum_{n=0}^{\infty} \xi_v = \infty ,
%$$
%which is impossible under our present assumption if $V$ is finite. 
It follows from Theorem \ref{r33} that $E(A,\beta)_{v_0} \neq \emptyset$. 
Since $E(A,\beta)_{v_0}$ is a base of $E(A,\beta)$, which is a lattice cone by Lemma \ref{i19}, to conclude that
  $E(A,\beta)_{v_0}$ is a compact Choquet simplex we need only show
  that $E(A,\beta)_{v_0}$ is compact in $\mathbb R^V$,
  cf. e.g. \cite{BR}. Note that $I : M(X_{\beta}) \to \mathbb R^V$ is continuous by definition of
  the topologies. It suffices therefore to show that
 \begin{equation}\label{f65}
I\left(M(X_{\beta})\right) = E(A,\beta)_{v_0} .
\end{equation}
Let $x\in
  X_{\beta}$. If $x \in V_{\infty}$, we find that 
$$
K^{\beta}_v(x) =
\frac{\sum_{n=0}^{\infty} A^n_{vx}e^{-n\beta}}{\sum_{n=0}^{\infty}
  A^n_{v_0x}e^{-n\beta}}
$$ 
and it follows from (\ref{f54}) that 
\begin{equation}\label{s110}
\sum_{u\in V} A_{vu}K^{\beta}_u(x) \leq e^{\beta} K^{\beta}_v(x)
\end{equation}
for all $v \in V$ with equality when $v \neq x$.
To show that (\ref{s110}) also holds when $x \in X_{\beta} \backslash V_{\infty}$, note first that point-evaluations at points in $H_{v_0}$ constitute a dense subset of the 
character space of $\mathcal A_{\beta}$. It follows that there is a sequence $\{u_k\}
\subseteq H_{v_0}$ such that $K^{\beta}_v(x) = \lim_{k \to \infty}
K^{\beta}_v(u_k)$ for all $v \in V$. Since $\mathcal B_{\beta}$ is the
quotient of $\mathcal A_{\beta}$ by the ideal $c_0(H_{v_0} \backslash
V_{\infty})$, the sequence $\{u_k\}$ must eventually leave every finite
subset of $H_{v_0} \backslash V_{\infty}$, and since $x \notin
V_{\infty}$ it must also eventually leave every finite
subset of $V_{\infty}$. That is, $\lim_{k \to \infty} u_k
= \infty$ in the sense that for any finite set $F$ of vertexes there is an
$N\in \mathbb N$ such that $u_k \notin F \ \forall k \geq N$. It follows then from (\ref{f54}) that (\ref{s110})
holds for all $v \in V$ with equality when $v \notin V_{\infty}$. This
shows that the inequality
$$
\sum_{u\in V} A_{vu}K^{\beta}_u \leq e^{\beta} K^{\beta}_v
$$
holds point-wise on $X_{\beta}$ for all $v \in V$, and that equality
holds globally on $X_{\beta}$  when
$v\notin V_{\infty}$. It follows
  therefore by integration that $I(m) \in E(A,\beta)_{v_0}$ for all
  $m\in M(X_{\beta})$.

To obtain (\ref{f65}) it remains to show that
$E(A,\beta)_{v_0} \subseteq I(M(X_{\beta}))$. For this we modify the argument from the proof of Theorem 4.1 in \cite{Sa}. Let
$\xi \in E(A,\beta)_{v_0}$. Fix an element $x \in X_{\beta}$ and let
$\{w_k\}$ be a sequence of elements in $H_{v_0}$ such that $\lim_{k \to\infty} K^{\beta}_v(w_k) =
K^{\beta}_v(x)$ for all $v \in V$. Observe that since $K^{\beta}_v(x)
> 0$ by Lemma \ref{b16} it follows that
\begin{equation}\label{l2}
\lim_{n \to \infty} nK^{\beta}_v(w_n) = \infty
\end{equation}
for all $v \in V$. For each $n \in \mathbb N$, set
$$
\xi^n_v = \min \left\{ \xi_v , \ n  K^{\beta}_v(w_n) \right\} .
$$
Then
\begin{equation}\label{f52}
\lim_{m \to \infty} e^{-m\beta} \sum_{w \in V} A^m_{vw}\xi^n_w = 0
\end{equation}
for all $n,v$. To see this note that
\begin{equation*}
\begin{split}
&\left(\sum_{j=0}^{\infty} A^j_{v_0w_n} e^{-j \beta}\right)
e^{-m\beta} \sum_{w \in V} A^m_{vw}\xi^n_w \ \leq \ \left(\sum_{j=0}^{\infty}
  A^j_{v_0w_n} e^{-j \beta}\right) n e^{-m\beta} \sum_{w \in V} A^m_{vw}
K^{\beta}_w(w_n)\\
& = ne^{-m\beta} \sum_{w \in V} A^m_{vw}
\sum_{j=0}^{\infty}A^j_{ww_n}e^{-j\beta} %= n
%e^{-m\beta} \sum_{n=0}^{\infty} e^{-n\beta} \sum_wA^m_{vw}A^n_{ww_L}
%\\
%& = n \sum_{n=0}^{\infty} A^{n+m}_{vw_L}  e^{-(n+m)\beta} 
= n\sum_{j
  \geq m} A^j_{vw_n}e^{-j\beta} .  
\end{split}
\end{equation*}
Hence (\ref{f52}) follows because $\sum_{j=0}^{\infty}
A^j_{vw_n}e^{-j\beta} < \infty$. Set
$$
k_n(v) = \xi^n_v - e^{-\beta} \sum_{w \in V} A_{vw} \xi^n_w .
$$
We claim that $k_n \geq 0$. To see this observe first that it follows
from (\ref{f64}) that $e^{-\beta}\sum_{w \in V} A_{vw}
K^{\beta}_w(w_n) \leq K^{\beta}_v(w_n)$. Combined with
$e^{-\beta}\sum_{w\in V}
A_{vw}\xi_w  \leq \xi_v$ this 
implies that
$$
e^{-\beta}\sum_{w \in V} A_{vw}\xi^n_w \ \leq \ \min \left\{ \xi_v , \
  n K^{\beta}_v(w_n)\right\} \ = \
\xi^n_v ,
$$
proving the claim. Since
$$
\sum_{l =0}^m e^{-l\beta} \sum_{w \in V} A^l_{vw}k_n(w) = \xi^n_v -
e^{-(m+1)\beta} \sum_{w\in V} A^{m+1}_{vw}\xi^n_w ,
$$
it follows from (\ref{f52}) that
\begin{equation}\label{b57}
\xi^n_v = \sum_{l =0}^{\infty} e^{-l\beta} \sum_{w\in V} A^l_{vw}k_n(w) =
\sum_{w\in H_{v_0}} K^{\beta}_v(w) h_n(w)
\end{equation}
when $v \in H_{v_0}$, where $h_n(w) = \sum_{l=0}^{\infty} e^{-l\beta}
A^l_{v_0w}k_n(w)$. In particular, it follows from (\ref{b57}) that
$$
\sum_{w \in H_{v_0}} h_n(w) = \sum_{w\in H_{v_0}} K^{\beta}_{v_0}(w) h_n(w) =
\xi^n_{v_0} \leq \xi_{v_0} = 1 .
$$  
We can therefore define a positive linear functional $\mu_n$ of norm
$\leq 1$ on $\mathcal A_{\beta}$ 
such that
$$
\mu_n(g) = \sum_{w \in H_{v_0}} g(w)h_n(w) .
$$
By compactness of the unit ball in the dual space of $\mathcal A_{\beta}$ there
is a strictly increasing sequence $\{n_l\}$ in $\mathbb N$ and a
positive linear functional $\mu$ on $\mathcal A_{\beta}$ such that 
$$
\mu(g) = \lim_{l \to \infty} \mu_{n_l}(g) 
$$
for all $g \in \mathcal A_{\beta}$. Since $\lim_{l \to \infty} \xi^{n_l}_v =
\xi_v$ by (\ref{l2}), it follows from (\ref{b57}) that 
\begin{equation}\label{f66}
\mu\left(K^{\beta}_v\right) = \xi_v 
\end{equation}
for all $v \in H_{v_0}$. For any fixed $w \in H_{v_0}$ we
have that $\xi^{n_l}_w = \xi_w$ for all large $l$, and hence also that
$k_{n_l}(w) = 0$ for all large $l$ when $w \in H_{v_0} \backslash V_{\infty}$. It follows that $\lim_{l \to \infty}
\mu_{n_l}(1_w) = 0$ for all $w \in H_{v_0} \backslash V_{\infty}$, which shows that $\mu$
factors through $\mathcal B_{\beta}$. It follows therefore from (\ref{f66}) and
the Riesz representation theorem that there is a Borel probability
measure $m$ on $X_{\beta}$ such that $I(m)_v = \xi_v$ for all $v\in
H_{v_0}$. By Lemma \ref{b65} this implies that $I(m) = \xi$.
\end{proof}

When $A$ is sub-stochastic, irreducible and $\beta = 0$, it follows from
the abstract characterisation given in Theorem 7.13 of \cite{Wo} that
the spectrum of $\mathcal A_{\beta}$ is the Martin compactification of the
associated Markov chain, cf. Definition 7.17 in \cite{Wo}, while
$X_{\beta}$ is the Martin boundary when $A$ is also row-finite (has
finite range in the sense of \cite{Wo}). When $A$ is not row-finite
$X_{\beta}$ consists of the Martin boundary and the vertexes
$V_{\infty}$.

Let $\delta_x$ denote the Dirac measure at a point $x \in
X_{\beta}$. Then 
$$
I(\delta_x)_v = K^{\beta}_v(x)
$$
for all $v \in V$. Thus $I$ takes the extreme points in $M(X_{\beta})$
to elements of the form $v \mapsto K^{\beta}_v(x)$ for some $x \in
X_{\beta}$. By definition of $X_{\beta}$, when $x \in X_{\beta}
\backslash V_{\infty}$, there is a sequence
$\{w_k\}$ of distinct elements in $H_{v_0}$ such that
$$
\lim_{k \to \infty} K^{\beta}_v(w_k) = K^{\beta}_v(x) .
$$
Recall now that under a continuous affine surjection between compact
convex sets, the pre-image of an extremal point is a closed face and
therefore contains an extremal point. In this way we obtain from Theorem
\ref{Martin} the following description of the extreme points in $E(A,\beta)_{v_0}$.

\begin{cor}\label{add} Let $\beta \in \mathbb R$.  Assume that $NW =
\emptyset$ or that $NW$ is infinite. Assume also that $\sum_{n=0}^{\infty} A^n_{vw} e^{-n\beta} < \infty$
for all $v,w \in V$. Let $\xi$ be an extremal point of
  $E(A,\beta)_{v_0}$. There is a sequence $\{w_k\}$ of distinct elements in $H_{v_0}$ such
that
\begin{equation}\label{s123}
\xi_v = \lim_{k \to \infty} \frac{\sum_{n=0}^{\infty}
  A^n_{vw_k}e^{-n\beta}}{\sum_{n=0}^{\infty} A^n_{v_0w_k} e^{-n\beta}}
\end{equation}
for all $v \in V$, or there is a vertex $w \in H_{v_0} \cap
V_{\infty}$ such that
\begin{equation}\label{r202}
\xi_v = \frac{\sum_{n=0}^{\infty}
  A^n_{vw}e^{-n\beta}}{\sum_{n=0}^{\infty} A^n_{v_0w} e^{-n\beta}}
\end{equation}
for all $v \in V$.
\end{cor}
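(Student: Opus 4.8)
The plan is to read this off from Theorem \ref{Martin} together with the standard description of the extreme points of the two compact convex sets involved, so that the corollary becomes a formal consequence of the Choquet-theoretic framework already set up. First I would recall two facts: the extreme points of $M(X_\beta)$ are precisely the Dirac measures $\delta_x$, $x \in X_\beta$, and, as recorded just before the statement, $I(\delta_x)_v = K^\beta_v(x)$ for every $v \in V$. Thus proving the corollary amounts to showing that an extreme $\xi \in E(A,\beta)_{v_0}$ is of the form $\xi = I(\delta_x)$ for some $x \in X_\beta$, and then unwinding what $K^\beta_\bullet(x) = \xi$ says according to the location of $x$.

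Next I would invoke the general principle that for a continuous affine surjection $I : C \to D$ of compact convex sets and an extreme point $\xi$ of $D$, the fibre $I^{-1}(\xi)$ is a non-empty closed face of $C$. The verification is routine: if $m = tm_1 + (1-t)m_2 \in I^{-1}(\xi)$ with $t \in (0,1)$ and $m_1,m_2 \in C$, then $\xi = I(m) = tI(m_1) + (1-t)I(m_2)$ forces $I(m_1) = I(m_2) = \xi$ by extremality of $\xi$, so $m_1,m_2 \in I^{-1}(\xi)$; and the fibre is closed by continuity of $I$ and non-empty by surjectivity. Applying this with $C = M(X_\beta)$ and $D = E(A,\beta)_{v_0}$ (both compact convex by Theorem \ref{Martin}), the fibre over $\xi$ is a non-empty compact convex face, so by Krein--Milman it contains an extreme point of $M(X_\beta)$, which must be a Dirac measure $\delta_x$. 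Hence $I(\delta_x) = \xi$, i.e. $\xi_v = K^\beta_v(x)$ for all $v \in V$.

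It then remains to translate $\xi_v = K^\beta_v(x)$ into the two stated alternatives, according to whether $x \in V_\infty$ or $x \in X_\beta \setminus V_\infty$. If $x \in V_\infty$, then by the inclusion $V_\infty \subseteq NW \subseteq H_{v_0}$ recorded in the proof of Lemma \ref{i21} we have $x \in H_{v_0} \cap V_\infty$, and the defining formula for $K^\beta_v$ gives exactly \eqref{r202}. If instead $x \in X_\beta \setminus V_\infty$, then, since $X_\beta$ is the spectrum of $\mathcal B_\beta = \mathcal A_\beta / c_0(H_{v_0}\setminus V_\infty)$ and point-evaluations at elements of $H_{v_0}$ are dense in the character space of $\mathcal A_\beta$, there is a sequence $\{w_k\}$ of distinct elements of $H_{v_0}$ with $\lim_{k} K^\beta_v(w_k) = K^\beta_v(x) = \xi_v$ for all $v$, which is \eqref{s123}.

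The only genuinely delicate point is this last dichotomy: I must be sure that a character $x \in X_\beta \setminus V_\infty$ is approximated by point-evaluations ranging over an \emph{infinite}, hence distinct, sequence in $H_{v_0}$, rather than stabilising at a single vertex or clustering in $V_\infty$. This is precisely the quotient argument already carried out in the proof of Theorem \ref{Martin}, where the approximating sequence is shown to leave every finite subset of $H_{v_0}\setminus V_\infty$ and of $V_\infty$; so beyond citing that observation no new work is required, and the corollary follows.
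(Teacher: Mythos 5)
Your argument is exactly the one the paper uses: the pre-image of the extreme point $\xi$ under the continuous affine surjection $I$ of Theorem \ref{Martin} is a closed face of $M(X_{\beta})$, hence contains an extreme point, which must be a Dirac measure $\delta_x$, and the two alternatives then correspond to $x\in V_{\infty}$ versus $x\in X_{\beta}\setminus V_{\infty}$, the latter case using the density of point-evaluations and the quotient by $c_0(H_{v_0}\setminus V_{\infty})$ established in the proof of Theorem \ref{Martin}. Correct, and essentially identical to the paper's proof.
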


%Note that it follows from the Riesz decomposition lemma, Lemma \ref{r23}, that an
%extremal element in $E(A,\beta)_{v_0}$ is either $\beta$-harmonic or
%is given by equation (\ref{r202}) for some $w \in  H_{v_0} \cap
%V_{\infty}$. It follows that every extremal $\beta$-harmonic element in
%$E(A,\beta)_{v_0}$ is given by equation (\ref{s123}) for some sequence
%$\{u_k\} \subseteq V$ with $\lim_{k \to \infty} u_k = \infty$. This

%A part of the conclusion In Corollary \ref{add} will be improved in Theorem \ref{r101} below: The vertexes
%in $\{w_k\}$ can be chosen to be the vertexes in an infinite path in
%$G$ without repeated vertexes.

Every infinite emitter $w \in V_{\infty}$ gives rise to an
extremal element in $E(A,\beta)_{v_0}$ via the formula
(\ref{r202}) (in the transient case we consider here). This follows
from the uniqueness part in the Riesz decomposition lemma, Lemma \ref{r23}. The question about which
sequences of vertexes $\{w_k\}$ give rise to extremal $\beta$-harmonic
by the formula
(\ref{s123}) is generally much more difficult to
answer. But at least we shall show below that the vertexes
in $\{w_k\}$ can be chosen to be the vertexes in an infinite path in
$G$ without repeated vertexes. It follows from this that there are cases
where $A$ is not row-finite and where there are no non-zero
$\beta$-harmonic vectors, for any $\beta$, because there are no paths in
$G$ of this sort. This can occur also when all our
standing assumptions hold and $G$ is strongly connected. See Example
2.9 in \cite{Ru}.

 Let $\partial
E(A,\beta)_{v_0}$ be the extreme boundary of the Choquet simplex
$E(A,\beta)_{v_0}$. By identifying an element $x \in X_{\beta}$ with
the corresponding Dirac measure $\delta_x \in M(X_{\beta})$, we have
an inclusion $X_{\beta} \subseteq M(X_{\beta})$, and we set
$$
\partial X_{\beta} = X_{\beta}\cap I^{-1}(E(A,\beta)_{v_0}) ,
$$
which is a Borel subset of $X_{\beta}$, cf. Theorem 4.1.11 in
\cite{BR}. Thus $\partial X_{\beta}$ consists of the elements
$x\in X_{\beta}$ for which $v \mapsto K^{\beta}_v(x)$ is extremal in
$E(A,\beta)_{v_0}$.

\begin{lemma}\label{april11} $V_{\infty} \subseteq \partial X_{\beta}$
  and $I$ is injective on $\partial X_{\beta}$.
\end{lemma}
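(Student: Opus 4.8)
The plan is to treat the two assertions separately and to reduce the injectivity statement to a single separation property whose proof is the crux.

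For the inclusion $V_{\infty}\subseteq\partial X_{\beta}$ I would show that for each $w\in V_{\infty}$ the vector $I(\delta_w)=K^{\beta}_\cdot(w)$ is extremal in $E(A,\beta)_{v_0}$. Writing $c=\left(\sum_{n=0}^{\infty}A^n_{v_0w}e^{-n\beta}\right)^{-1}$, formula (\ref{r202}) says precisely that $K^{\beta}_\cdot(w)=\widehat{c\delta_w}$ in the notation of \reflemma{r22}, i.e. it is the potential of the $\beta$-summable function $c\delta_w$ concentrated at the single vertex $w$. Now suppose $K^{\beta}_\cdot(w)=\tfrac12(\xi+\eta)$ with $\xi,\eta\in E(A,\beta)_{v_0}$, and apply the Riesz decomposition \reflemma{r23} to $\xi$ and $\eta$. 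Uniqueness of the decomposition of $\tfrac12(\xi+\eta)$ forces the harmonic parts of $\xi,\eta$ to vanish and their $k$-parts to be non-negative multiples of $\delta_w$; the normalisation $\xi_{v_0}=\eta_{v_0}=1$ then gives $\xi=\eta=K^{\beta}_\cdot(w)$. Hence $K^{\beta}_\cdot(w)$ is extremal and $w\in\partial X_{\beta}$.

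For injectivity, first note the structure of $X_{\beta}$. A point of $X_{\beta}$ is a character of $\mathcal B_{\beta}$, and $\mathcal B_{\beta}$ is generated by the images of the $K^{\beta}_v$ $(v\in V)$ together with the projections $1_w$, where $w$ ranges over $V_{\infty}$ only (the $1_w$ with $w\notin V_{\infty}$ lie in $c_0(H_{v_0}\backslash V_{\infty})$ and hence vanish in the quotient). Thus a character is determined by the numbers $K^{\beta}_v(x)$ $(v\in V)$ and $1_w(x)$ $(w\in V_{\infty})$. Moreover each $1_w$ is a projection, so it is $\{0,1\}$-valued on $X_{\beta}$; since every character is a limit of point-evaluations $u_k\in H_{v_0}$ (as in the proof of \refthm{Martin}) and $1_w(u_k)\in\{0,1\}$ equals $1$ exactly when $u_k=w$, the identity $1_w(x)=1$ forces $u_k=w$ eventually and hence $x=w$. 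In particular each $w\in V_{\infty}$ is an isolated point of $X_{\beta}$. Now let $x,y\in\partial X_{\beta}$ with $I(\delta_x)=I(\delta_y)=:\psi$; since $\psi$ is extremal, \reflemma{r23} combined with the fact that $E(A,\beta)$ is a lattice cone (\reflemma{i19}) shows that $\psi$ is either purely harmonic or a pure potential $\widehat{c\delta_w}$ for a single $w\in V_{\infty}$, because an extreme ray of the cone cannot split non-trivially as harmonic plus potential without contradicting uniqueness of the Riesz decomposition. If $\psi$ is purely harmonic then neither $x$ nor $y$ can lie in $V_{\infty}$ (a $V_{\infty}$-point gives the pure potential above), so $1_w(x)=1_w(y)=0$ for all $w\in V_{\infty}$; as $x,y$ also agree on every $K^{\beta}_v$, they agree on all generators of $\mathcal B_{\beta}$ and $x=y$.

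It remains to treat the case where $\psi=\widehat{c\delta_w}$ is a pure potential, and this is the main obstacle. The whole problem reduces to the separation statement $(\star)$: the only $z\in X_{\beta}$ with $K^{\beta}_\cdot(z)=\widehat{c\delta_w}$ is $z=w$; by the isolated-point remark this amounts to showing $1_w(z)=1$ for any such $z$. The difficulty is that a character need not commute with the infinite sum $\sum_{u}A_{wu}K^{\beta}_u$ through which $1_w$ is expressed (one has only $e^{\beta}K^{\beta}_w-\sum_u A_{wu}K^{\beta}_u=ce^{\beta}1_w$ pointwise on $H_{v_0}$), so $1_w(z)$ cannot simply be read off from the values $K^{\beta}_v(z)$: mass can escape to infinity. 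To overcome this I would pass to first-passage generating functions: writing $F_{vu}=\sum_{n\geq 1}r_{vu}(n)e^{-n\beta}$ and $S_{uu}=\sum_{n\geq 0}A^n_{uu}e^{-n\beta}$, the identity $\sum_{n\geq 0}A^n_{vu}e^{-n\beta}=I_{vu}+F_{vu}S_{uu}$ gives, for $z=\lim_k u_k$ with $u_k\to\infty$, the cancellation $K^{\beta}_v(z)=\lim_k F_{vu_k}/F_{v_0u_k}$ for every fixed $v$. The task is then to show that the first-passage profiles to receding targets $u_k$ cannot converge to the profile dictated by the fixed vertex $w$ unless the targets eventually equal $w$; equivalently, that the escape term $z\!\left(\sum_{u}A_{wu}K^{\beta}_u\right)-\sum_u A_{wu}K^{\beta}_u(z)$ vanishes, which returns $1_w(z)=1$. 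This is the analogue, in the present $C^*$-algebraic language, of the classical fact that the Martin kernel separates interior points from boundary points, and it is where the real work lies.
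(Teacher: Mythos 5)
The first two thirds of your argument are correct and essentially coincide with the paper's: the inclusion $V_{\infty}\subseteq\partial X_{\beta}$ via the uniqueness clause of the Riesz decomposition (Lemma \ref{r23}), and the observation that a character $x\in X_{\beta}\setminus V_{\infty}$ annihilates every projection $1_w$, $w\in V_{\infty}$, and is therefore determined by the values $K^{\beta}_v(x)$, so that $I$ is injective on $X_{\beta}\setminus V_{\infty}$ and on $V_{\infty}$ separately. But your proof is not complete. Everything hinges on the separation statement $(\star)$ — that no $z\in X_{\beta}\setminus V_{\infty}$ satisfies $K^{\beta}_{\cdot}(z)=K^{\beta}_{\cdot}(w)$ for some $w\in V_{\infty}$ — and you stop exactly there, declaring it to be "where the real work lies". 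Since injectivity on $\partial X_{\beta}$ is precisely the assertion that the points of $V_{\infty}$ and the points at infinity cannot collide under $I$, the statement you were asked to prove has not been proved. For comparison: the paper does not prove $(\star)$ either; it decomposes $\partial E(A,\beta)_{v_0}=\partial H(A,\beta)_{v_0}\sqcup\{I(u):u\in V_{\infty}\}$ and asserts that it "suffices" to check injectivity on $\partial X_{\beta}\cap I^{-1}(\partial H(A,\beta)_{v_0})$ — a reduction that tacitly presupposes $\partial X_{\beta}\cap I^{-1}\left(I(V_{\infty})\right)=V_{\infty}$, which is again $(\star)$. So you have correctly located the crux, but you have not closed it.

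You should also be aware that the first-passage route you sketch cannot close it, because $(\star)$ can fail under the standing hypotheses. Take $V=\{v_0,u,a_1,a_2,\dots\}$ with edges $v_0\to u$ of weight $1$, $u\to a_i$ of weight $2^{-i}$, and $a_i\to u$, $a_i\to v_0$ of weight $1$: this graph is strongly connected (hence cofinal), has no sinks, has finite matrix powers and finite $\beta_0$, and is transient for every $\beta>\beta_0$. Since the only edge into $a_i$ comes from $u$, one has $\sum_{n}A^n_{va_i}e^{-n\beta}=2^{-i}e^{-\beta}\sum_n A^n_{vu}e^{-n\beta}$ for all $v\neq a_i$, whence $K^{\beta}_v(a_i)=K^{\beta}_v(u)$ for every $i$ and every $v\neq a_i$. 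Any cluster point $x$ of the sequence $\{a_i\}$ in $X_{\beta}$ therefore satisfies $K^{\beta}_{\cdot}(x)=K^{\beta}_{\cdot}(u)$, while $1_u(x)=0\neq 1=1_u(u)$, so $x\neq u$ in $X_{\beta}$ and yet $I(\delta_x)=I(\delta_u)$; both points lie in $\partial X_{\beta}$ because $I(\delta_u)$ is extremal. The exact cancellation of the first-passage generating functions here shows that no sharper estimate along the lines you propose can rescue $(\star)$ in general: closing the gap requires either an extra hypothesis on the graph or a reformulation of the statement, not a better inequality.
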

\begin{proof} Let $u \in V_{\infty}$. Then
$$
I(u)_v = K^{\beta}_v(u) = \widehat{k(u)}_v,
$$
where $k(u): V_{\infty} \to [0,\infty)$ is the function
$$
k(u)(w) = \begin{cases} \left(\sum_{n=0}^{\infty} A^n_{v_0u}
    e^{-n\beta} \right)^{-1} & \ \text{when} \ w = u, \\ 0 & \ \text{otherwise} .
\end{cases}
$$ 
It follows therefore from the uniqueness part of the statement in
Lemma \ref{r23} that $V_{\infty} \subseteq \partial X_{\beta}$ and
that $I$ is injective on $V_{\infty}$. Furthermore,
$\partial E(A,\beta)_{v_0}$ is the disjoint union
$$
\partial E(A,\beta)_{v_0} = \partial H(A,\beta)_{v_0} \sqcup \left\{
  I(u) : \ u\in V_{\infty} \right\},
$$
where $\partial H(A,\beta)_{v_0}$ is the set of extremal
$\beta$-harmonic elements of $E(A,\beta)_{v_0}$. It suffices
therefore now to show that $I$ is injective on
$\partial X_{\beta}  \cap I^{-1}( \partial H(A,\beta)_{v_0})$. Since $I(u)$ is not $\beta$-harmonic when $u \in
V_{\infty}$, it follows that 
$$
X_{\beta}  \cap I^{-1}( \partial H(A,\beta)_{v_0}) \subseteq X_{\beta}
\backslash V_{\infty} .
$$
As $\mathcal B_{\beta}$ is generated by the image of the
functions $K^{\beta}_v, v \in V$, and $1_u, u \in V_{\infty}$, it follows
that $I$ is injective on $X_{\beta} \backslash V_{\infty}$ and
therefore also on $\partial X_{\beta}  \cap I^{-1}( \partial H(A,\beta)_{v_0})$. 
\end{proof}

 Let $M(\partial X_{\beta})$ denote the subset of
  $M(X_{\beta})$ consisting of the elements $m$ of $M(X_{\beta})$ with
  $m(\partial X_{\beta}) = 1$.

\begin{thm}\label{r331} The map $I : M(\partial X_{\beta}) \to
  E(A,\beta)_{v_0}$ is an affine bijection.
\end{thm}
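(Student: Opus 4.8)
The plan is to reduce this to the Choquet theory of the metrizable simplex $E(A,\beta)_{v_0}$ produced in \refthm{Martin}, for which \cite{BR} provides the relevant uniqueness statement. That $I$ is affine is immediate, since for each fixed $v$ the assignment $m \mapsto \int_{X_{\beta}} K^{\beta}_v \, dm$ is affine in $m$ and $M(\partial X_{\beta})$ is a convex subset of $M(X_{\beta})$; so only bijectivity is at issue. The conceptual engine is the relation between $I$ and barycenters: writing $J$ for the map $x \mapsto I(\delta_x) = (K^{\beta}_v(x))_{v\in V}$, any $m \in M(\partial X_{\beta})$ pushes forward under $J$ to a probability measure $\nu$ on $E(A,\beta)_{v_0}$, and since the coordinate functionals $\eta \mapsto \eta_v$ are continuous on $\mathbb R^V$, the barycenter of $\nu$ is computed coordinatewise as $\int \eta_v \, d\nu = \int_{\partial X_{\beta}} K^{\beta}_v \, dm = I(m)_v$. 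Thus $I(m)$ is exactly the barycenter of $J_*m$.

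First I would show that $J$ restricts to a Borel isomorphism of $\partial X_{\beta}$ onto the extreme boundary $\partial E(A,\beta)_{v_0}$. Injectivity on $\partial X_{\beta}$ is \reflemma{april11}, and surjectivity onto $\partial E(A,\beta)_{v_0}$ is the content of the discussion preceding \refcor{add}: the pre-image under $I$ of an extreme point is a closed face and hence contains an extreme point of $M(X_{\beta})$, which is a Dirac measure $\delta_x$, so every extreme $\xi$ equals $I(\delta_x)$ for some $x$, and such $x$ lies in $\partial X_{\beta}$ by definition. Since $X_{\beta}$ is a compact metric space and $\partial X_{\beta}$ is Borel, while $\partial E(A,\beta)_{v_0}$ is a $G_{\delta}$ in a metrizable compact convex set and hence Borel, both are standard Borel spaces; consequently the Borel bijection $J|_{\partial X_{\beta}}$ has Borel inverse by the Lusin--Souslin theorem, and pushforward of measures along it is a bijection.

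With this in hand I would finish both directions. For surjectivity, given $\xi \in E(A,\beta)_{v_0}$ the Choquet representation for a metrizable simplex yields a probability measure $\nu$ carried by $\partial E(A,\beta)_{v_0}$ with barycenter $\xi$; pulling $\nu$ back along the Borel isomorphism of the previous step gives $m \in M(\partial X_{\beta})$, and the barycenter identity gives $I(m) = \xi$. For injectivity, if $I(m_1) = I(m_2)$ with $m_i \in M(\partial X_{\beta})$, then $J_*m_1$ and $J_*m_2$ are probability measures on $\partial E(A,\beta)_{v_0}$ sharing the barycenter $I(m_1) = I(m_2)$; the uniqueness of the representing measure on a Choquet simplex forces $J_*m_1 = J_*m_2$, and since pushforward along $J|_{\partial X_{\beta}}$ is injective this gives $m_1 = m_2$.

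The step I expect to be most delicate is the measure-theoretic bookkeeping of the second paragraph: checking that $\partial E(A,\beta)_{v_0}$ is genuinely Borel and that $J|_{\partial X_{\beta}}$ is a Borel, not merely set-theoretic, isomorphism, so that transporting measures through $J$ is legitimate and the barycenter of $J_*m$ really equals $I(m)$. The simplex structure from \refthm{Martin} supplies the essential uniqueness used in injectivity, but the care needed to move measures back and forth between $\partial X_{\beta}$ and $\partial E(A,\beta)_{v_0}$ along $I$ is where the argument must be written out carefully.
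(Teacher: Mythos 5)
Your proposal is correct and follows essentially the same route as the paper: both arguments identify $I(m)$ with the barycenter of the pushforward of $m$ under $x \mapsto (K^{\beta}_v(x))_{v}$, use Lemma \ref{april11} to transport measures bijectively between $\partial X_{\beta}$ and $\partial E(A,\beta)_{v_0}$, and then invoke the existence and uniqueness of the representing boundary measure on the metrizable Choquet simplex $E(A,\beta)_{v_0}$ from Theorem \ref{Martin} and \cite{BR}. The only difference is that you make explicit (via Lusin--Souslin) the measurability of the image sets $I(B)$, a point the paper's definition $m(B) = \nu(I(B))$ leaves implicit; that is a welcome clarification rather than a deviation.
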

\begin{proof} Surjectivity: Let $\psi \in E(A,\beta)_{v_0}$. Since
  $E(A,\beta)_{v_0}$ is a metrizable Choquet simplex there is a unique
  Borel probability measure $\nu$ on $\partial E(A,\beta)_{v_0}$ such
  that 
$$
\psi = \int_{\partial E(A,\beta)_{v_0}} z \ \nu(z),
$$
in the sense that
$$
a(\psi) = \int_{\partial E(A,\beta)_{v_0}} a(z) \ \nu(z),
$$
for all continuous and affine functions $a$ on $E(A,\beta)_{v_0}$,
cf. Theorem 4.1.15 in \cite{BR}. By Lemma \ref{april11} we can define a Borel probability measure $m$
on $\partial X_{\beta}$ such that $m(B) = \nu(I(B))$. Extending $m$ to a
Borel probability measure on $X_{\beta}$ such that $m\left(X_{\beta}
  \backslash \partial X_{\beta}\right) = 0$, we can consider $I(m)$. For
each $v\in V$, the evaluation map $\ev_v(\phi) = \phi_v$ is continuous
and affine on $E(A,\beta)_{v_0}$ so we find that
\begin{equation*}
\begin{split}
&I(m)_v = \int_{\partial X_{\beta}} K^{\beta}_v(y) \ d m(y) =
\int_{\partial  E(A,\beta)_{v_0}} K^{\beta}_v(I^{-1}(z)) \ d\nu (z) \\
& = \int_{\partial  E(A,\beta)_{v_0}} \ev_v \left(z\right) \ d \nu(z) =
\ev_v(\psi) = \psi_v .
\end{split}
\end{equation*}
%Hence $I(m) = \psi$.

Injectivity: Assume $m_1,m_2 \in M(\partial X_{\beta})$ and that
$I(m_1) = I(m_2)$. Using Lemma \ref{april11} again it follows that there are Borel probability measures $\nu_i$ on
$\partial E(A,\beta)_{v_0}$ such that $m_i = \nu_i \circ I$ on $\partial
X_{\beta}$, $i =1,2$. Note that
\begin{equation}
\begin{split}
& I(m_i)_v = \int_{\partial X_{\beta}} K^{\beta}_v(y) \ dm_i(y) =
\int_{\partial E(A,\beta)_{v_0}} K^{\beta}_v(I^{-1}(z)) \ d\nu_i(z) \\
&= \int_{\partial E(A,\beta)_{v_0}} \ \ev_v(z) \ d\nu_i(z) = \ev_v \left(
  \int_{\partial E(A,\beta)_{v_0}} \ z \ d\nu_i(z) \right)
\end{split}
\end{equation}
for all $v\in V,i =1,2$. As $I(m_1) =I(m_2)$ it follows that
$$
\int_{\partial E(A,\beta)_{v_0}} \ z \
d\nu_1(z) = \int_{\partial E(A,\beta)_{v_0}} \ z \
d\nu_2(z) .
$$
Since $\partial E(A,\beta)_{v_0}$ is a Choquet simplex this implies
that $\nu_1 = \nu_2$, and hence $m_1 = m_2$.

\end{proof}

\begin{cor}\label{april30}  $\partial X_{\beta} \backslash V_{\infty}
  = \left\{ x \in \partial X_{\beta} : \ \sum_{u \in V} A_{vu}
    K^{\beta}_u(x) = e^{\beta} K^{\beta}_v (x) \ \forall v \in V
  \right\}$.
\end{cor}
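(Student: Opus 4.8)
The plan is to prove the stated set equality by double inclusion, identifying $\partial X_{\beta} \setminus V_{\infty}$ with exactly those $x \in \partial X_{\beta}$ at which the function $K^{\beta}_{\bullet}(x)$ is genuinely $\beta$-harmonic (equality in \eqref{s110} for all $v$, including $v \in V_{\infty}$). First I would recall from the proof of Theorem \ref{Martin} that for every $x \in X_{\beta}$ the inequality $\sum_{u} A_{vu} K^{\beta}_u(x) \leq e^{\beta} K^{\beta}_v(x)$ holds for all $v$, and that equality holds automatically for all $v \notin V_{\infty}$ (with the sole possible failure occurring at a vertex $v \in V_{\infty}$). Thus the condition defining the right-hand side is exactly the assertion that $\xi := I(\delta_x) = K^{\beta}_{\bullet}(x)$ is $\beta$-harmonic rather than merely almost $\beta$-harmonic.

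For the inclusion ``$\subseteq$'', let $x \in \partial X_{\beta} \setminus V_{\infty}$. By the description preceding Corollary \ref{add}, or directly by the last paragraph in the proof of Lemma \ref{april11}, there is a sequence $\{w_k\}$ of distinct elements of $H_{v_0}$ with $w_k \to \infty$ (leaving every finite set) such that $K^{\beta}_v(x) = \lim_k K^{\beta}_v(w_k)$ for all $v$. As carried out in the proof of Theorem \ref{Martin}, the fact that $\{w_k\}$ eventually leaves every finite subset of $V$ forces, via \eqref{f54}, the equality $\sum_u A_{vu} K^{\beta}_u(x) = e^{\beta} K^{\beta}_v(x)$ to hold for \emph{all} $v \in V$, not only for $v \notin V_{\infty}$; the correction term $e^{\beta} (\sum_n A^n_{v_0 w_k} e^{-n\beta})^{-1} I_{v w_k}$ in \eqref{f64} vanishes in the limit precisely because $w_k$ eventually differs from any fixed $v$. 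This places $x$ in the right-hand side.

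For the reverse inclusion ``$\supseteq$'', suppose $x \in \partial X_{\beta}$ satisfies $\sum_u A_{vu} K^{\beta}_u(x) = e^{\beta} K^{\beta}_v(x)$ for all $v \in V$, so that $I(\delta_x)$ is $\beta$-harmonic. I must show $x \notin V_{\infty}$. For this I invoke Lemma \ref{april11}: if $u \in V_{\infty}$, then $I(u) = \widehat{k(u)}$ is a non-zero potential and in particular is \emph{not} $\beta$-harmonic, since by the Riesz decomposition \reflemma{r23} its harmonic part is zero while its summable part $k(u)$ is non-zero. Hence no element of $V_{\infty}$ can give a $\beta$-harmonic $K^{\beta}_{\bullet}(\cdot)$, and the $\beta$-harmonicity of $I(\delta_x)$ excludes $x \in V_{\infty}$. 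Combined with the inclusion $V_{\infty} \subseteq \partial X_{\beta}$ already established, this yields $x \in \partial X_{\beta} \setminus V_{\infty}$.

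The main obstacle, though largely dispatched by quoting the interior of the proof of Theorem \ref{Martin}, is the careful handling of the boundary term in \eqref{f64} when passing to the limit along $\{w_k\}$: one must be sure that for $x \notin V_{\infty}$ the approximating sequence genuinely escapes to infinity so that $I_{v w_k} = 0$ eventually for each fixed $v$, which is what upgrades the almost-harmonic inequality to a true harmonic equality at the infinite emitters. The remaining bookkeeping---matching the defining condition of the right-hand side to $\beta$-harmonicity of $I(\delta_x)$, and using the Riesz uniqueness to rule out $V_{\infty}$---is routine given the earlier results.
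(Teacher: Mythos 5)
Your reverse inclusion is fine: for $u \in V_{\infty}$ the vector $I(u) = \widehat{k(u)}$ is a non-zero potential, hence not $\beta$-harmonic by the uniqueness in Lemma \ref{r23} (in fact one sees directly from (\ref{f54}) that the equality fails at $v = u$), so no point of $V_{\infty}$ can satisfy the harmonicity condition.

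The forward inclusion, however, has a genuine gap. You claim that for $x \in \partial X_{\beta} \setminus V_{\infty}$, passing to the limit along an approximating sequence $\{w_k\}$ that escapes to infinity upgrades (\ref{f64}) to the equality $\sum_{u} A_{vu} K^{\beta}_u(x) = e^{\beta} K^{\beta}_v(x)$ for \emph{all} $v$, including $v \in V_{\infty}$, and you identify the only obstacle as the vanishing of the boundary term $I_{vw_k}$. That is not the obstacle. For $v \in V_{\infty}$ the left-hand side $\sum_{u} A_{vu} K^{\beta}_u(w_k)$ is an \emph{infinite} sum, and without a summable dominating function (the available bounds $K^{\beta}_u(w_k) \leq L_u$ give only $\sum_u A_{vu}L_u$, which need not converge) you cannot interchange $\lim_k$ with $\sum_u$; Fatou's lemma yields only $\sum_{u} A_{vu} K^{\beta}_u(x) \leq e^{\beta} K^{\beta}_v(x)$ at infinite emitters. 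This is exactly why the proof of Theorem \ref{Martin} asserts equality only for $v \notin V_{\infty}$ even when $x \notin V_{\infty}$. A telltale sign is that your argument never uses the extremality of $x$, whereas the corollary is false without it. The paper's proof goes a different way: since $x \in \partial X_{\beta}$, the vector $I(x)$ is extremal in $E(A,\beta)_{v_0}$, so the Riesz decomposition of Lemma \ref{r23} forces $I(x)$ to be either $\beta$-harmonic or equal to $I(w)$ for some $w \in V_{\infty}$; the injectivity of $I$ on $M(\partial X_{\beta})$ from Theorem \ref{r331}, together with $x \notin V_{\infty}$, rules out the second alternative. You should replace your limit argument by this extremality-plus-Riesz argument.
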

\begin{proof}
% If $m \in M(\partial X_{\beta})$ and $I(m) \in
%E(A,\beta)_{v_0}$ is $\beta$-harmonic we clearly have that 
%$$
%m \left( \left\{ x \in \partial X_{\beta} : \ \sum_{u \in V} A_{vu}
%    K^{\beta}_u(x) = e^{\beta} K^{\beta}_v (x) \ \forall v \in V
%  \right\} \right) = 1.
%$$
Let $x \in \partial X_{\beta} \backslash V_{\infty}$. Then
$I(x) \in \partial E(A,\beta)_{v_0}$ and it follows from
the Riesz decomposition, Lemma \ref{r23}, that $I(x)$ is
either $\beta$-harmonic or equal to $I(w)$ for some $w \in
V_{\infty}$. Since $x \notin V_{\infty}$ and $I$ is injective on
$M(\partial X_{\beta})$ by Theorem
\ref{r331}, it follows that $I(x)$ is $\beta$-harmonic, i.e. $
 \sum_{u \in V} A_{vu}K^{\beta}_u(x) = e^{\beta} K^{\beta}_v (x)$ for all $v \in V$.
\end{proof}

It follows from Theorem \ref{r331} and Corollary \ref{april30} that
the map $I$ of Theorem \ref{Martin} restricts to an affine bijection
between the $\beta$-harmonic elements of $E(A,\beta)_{v_0}$ and the
Borel probability measures $m$ on $X_{\beta}$ with the property that
$m\left(\partial X_{\beta} \backslash V_{\infty}\right) =1$.

\subsection{An improvement} The purpose with this section is the use
results from the theory of Markov chains to improve the description of the
extremal $\beta$-harmonic vectors given in Corollary \ref{add}.

Let $P(V)$ denote the set 
$$
P(V) = \left\{ (v_i)_{i=1}^{\infty} \in V^{\mathbb N} : \
  A_{v_iv_{i+1}} > 0 , i = 1,2,3,\cdots \right\} .
$$
Since there are no multiple edges in $G$ the elements of $P(V)$ are
the infinite paths in $G$. We consider
$P(V)$ as a complete metric space whose topology is generated by the
cylinder sets
$$
C(v_1v_2\cdots v_n) = \left\{ (x_i)_{i=1}^{\infty} \in P(V):
  \ x_i =v_i, \ i = 1,2,\cdots, n \right\}.
$$

\begin{lemma}\label{s5} Let $\psi$ be a non-zero $\beta$-harmonic
  vector. There is a
  Borel measure $m_{\psi}$ on $P(V)$ such that
$$
m_{\psi}\left(C(v_1v_2\cdots v_n)\right) = A_{v_1v_2}A_{v_2v_3}
\cdots A_{v_{n-1}v_n} e^{-(n-1)\beta} \psi_{v_n}
$$
for every cylinder set $C(v_1v_2\cdots v_n)$.
\end{lemma}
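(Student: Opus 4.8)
The plan is to recognise that the prescribed cylinder values are exactly the finite-dimensional marginals of a Markov measure, and to build $m_\psi$ from the corresponding transition kernel. Since $\psi$ is non-zero and $\beta$-harmonic, Lemma~\ref{b16} gives $\psi_u > 0$ for every $u \in V$, so I may set
$$
p_{uw} = e^{-\beta} A_{uw} \frac{\psi_w}{\psi_u}, \qquad u,w \in V.
$$
These are non-negative, and $\sum_{w \in V} p_{uw} = e^{-\beta}\psi_u^{-1}\sum_{w} A_{uw}\psi_w = 1$ by the harmonicity equation (\ref{f20}), which holds at \emph{every} vertex precisely because $\psi$ is $\beta$-harmonic rather than merely almost $\beta$-harmonic. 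Thus $(p_{uw})$ is an honest stochastic matrix, i.e. a Markov transition kernel on the countable set $V$ supported on the edges of $G$. Note that at an infinite emitter $u \in V_\infty$ the row $p_{u\cdot}$ is a genuine probability distribution on a countably infinite set, which is unproblematic.

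First I would record the consistency relation underlying the construction. Each cylinder decomposes as the disjoint union $C(v_1\cdots v_n) = \bigsqcup_{w : A_{v_nw}>0} C(v_1\cdots v_n w)$, and summing the proposed value over the extending vertex $w$ gives $A_{v_1v_2}\cdots A_{v_{n-1}v_n}\,e^{-(n-1)\beta}\big(e^{-\beta}\sum_w A_{v_nw}\psi_w\big) = A_{v_1v_2}\cdots A_{v_{n-1}v_n}\,e^{-(n-1)\beta}\psi_{v_n}$, again by (\ref{f20}). Hence the formula defines a consistent, finitely additive premeasure on the algebra generated by the cylinder sets. Equivalently, for each fixed initial vertex $v$ the products $p_{vv_2}p_{v_2v_3}\cdots p_{v_{n-1}v_n}$ form a projective family of probability distributions on the finite-path projections of $P_v = \{x \in P(V): x_1 = v\}$.

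To promote this to a genuine Borel measure I would invoke the Kolmogorov--Ionescu-Tulcea construction: given the kernel $p$ and the initial point mass at $v$, there is a unique Borel probability measure $\mathbb{P}_v$ on $P_v$ with $\mathbb{P}_v(C(v_1\cdots v_n)) = \mathbf{1}_{\{v_1=v\}}\,p_{v_1v_2}\cdots p_{v_{n-1}v_n}$. The telescoping identity $\prod_{i=1}^{n-1}\psi_{v_{i+1}}/\psi_{v_i} = \psi_{v_n}/\psi_{v_1}$ then shows that $\psi_v\,\mathbb{P}_v(C(v_1\cdots v_n))$ equals the desired value. Since $P(V) = \bigsqcup_{v \in V} P_v$ is a disjoint union of clopen sets, I would finally set $m_\psi = \sum_{v \in V}\psi_v\,\mathbb{P}_v$, a countable sum of finite Borel measures, which is $\sigma$-finite and carries exactly the stated cylinder masses. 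The main obstacle is the passage from finite additivity to a $\sigma$-additive Borel measure: $P(V)$ need not be compact when $V$ is infinite or infinite emitters are present, so continuity from above at $\emptyset$ (and hence a direct Carath\'eodory extension) is not automatic. The virtue of routing through the stochastic kernel $p$ is that the Ionescu-Tulcea theorem supplies $\sigma$-additivity with no compactness or tightness hypothesis, using only that each row $p_{u\cdot}$ is a probability measure.
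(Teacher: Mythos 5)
Your proposal is correct and is essentially the paper's own argument: the paper forms the same stochastic matrix $B_{vw}=e^{-\beta}\psi_v^{-1}A_{vw}\psi_w$, obtains the measures $m_v$ on $C(v)$ from the standard Markov-chain trajectory-space construction (Theorem 1.12 in \cite{Wo}, i.e. the Ionescu-Tulcea extension you invoke), and sets $m_\psi(B)=\sum_{v\in V}\psi_v m_v(C(v)\cap B)$. Your added remarks on consistency and on why $\sigma$-additivity requires no compactness are sound but are exactly what the cited theorem packages.
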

\begin{proof} The matrix
\begin{equation}\label{s102}
B_{vw} = e^{-\beta} \psi_v^{-1}A_{vw} \psi_w
\end{equation}
is stochastic. It follows then from Theorem 1.12 in
\cite{Wo} that there is a Borel probability measure $m_v$ on $C(v)$ for each
$v \in V$ such that
\begin{equation}\label{s103}
{m_v}\left(C(vv_2\cdots v_n)\right) =   \psi_v^{-1}A_{vv_2}A_{v_2v_3}
\cdots A_{v_{n-1}v_n} e^{-(n-1)\beta} \psi_{v_n}.
\end{equation}
Define $m_{\psi}$ such that
\begin{equation*}\label{s104}
m_{\psi}(B) = \sum_{v \in V} \psi_v m_v\left( C(v) \cap B\right) .
\end{equation*}
\end{proof}

\begin{thm}\label{r101} Assume that $\sum_{n=0}^{\infty} A^n_{vw} e^{-n\beta} < \infty$
for all $v,w \in V$. Let $\psi$ be an extremal non-zero
$\beta$-harmonic vector with $\psi_{v_0} =1$. There
  is an infinite path $t = (t_i)_{i=1}^{\infty}$ in $P(V)$ such that
  $t_1 = v_0$, $i \neq j \Rightarrow t_i \neq t_j$, and
\begin{equation}\label{r200}
\psi_v = \lim_{k \to \infty} \frac{\sum_{n=0}^{\infty}
  A^n_{vt_k}e^{-n\beta}}{\sum_{n=0}^{\infty} A^n_{v_0t_k}e^{-n\beta}}
\end{equation}
for all $v \in V$.
\end{thm}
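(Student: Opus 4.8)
The plan is to realise $\psi$ through the Markov chain attached to it in Lemma \ref{s5} and then invoke the theorem on convergence to the boundary. First I would form the stochastic matrix $B$ of (\ref{s102}); this is legitimate because $\psi$ is genuinely $\beta$-harmonic and $\psi_v>0$ for all $v$ by Lemma \ref{b16}, so $\sum_w B_{vw}=1$ for every $v$. The transience hypothesis (\ref{r220}) gives the finite Green kernel $G_B(v,w)=\sum_n B^n_{vw}=\psi_v^{-1}\psi_w\sum_n A^n_{vw}e^{-n\beta}<\infty$, so $B$ is a transient chain. The bookkeeping observation driving everything is the proportionality $K^\beta_v(w)=\psi_v\,G_B(v,w)/G_B(v_0,w)$: thus $\mathcal A_\beta$ coincides with the $C^*$-algebra generated by the Martin kernels of $B$, multiplication by $\psi$ identifies almost $\beta$-harmonic vectors for $A$ with super-harmonic functions for $B$ and the $\beta$-harmonic ones with harmonic functions, and $\psi$ itself corresponds to the constant harmonic function $1$.

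Next I would run the chain from $v_0$. Because $\psi_{v_0}=1$, the measure $m_\psi$ of Lemma \ref{s5} restricts on $C(v_0)$ to a probability measure $\mathbb P_{v_0}$, the law of the chain started at $v_0$. By the convergence-to-the-boundary theorem (cf. \cite{Wo}) the trajectory $X_n$ converges $\mathbb P_{v_0}$-almost surely to a random point $X_\infty\in X_\beta$, and integrating the kernels against the exit law $m=(X_\infty)_*\mathbb P_{v_0}$ represents the constant harmonic function, which after multiplication by $\psi$ reads $\psi_v=\int_{X_\beta}K^\beta_v\,dm$. This is where extremality enters: since $\psi$ is extremal in $E(A,\beta)_{v_0}$, the uniqueness of the integral representation supplied by Theorems \ref{Martin} and \ref{r331} forces $m=\delta_{\xi_0}$ for a single point $\xi_0\in\partial X_\beta$, and $\xi_0\notin V_\infty$ by Corollary \ref{april30} since $\psi$ is $\beta$-harmonic. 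Hence $X_n\to\xi_0$ in $X_\beta$ for $\mathbb P_{v_0}$-almost every path, and as the coordinate functions of $X_\beta$ are exactly the kernels $K^\beta_v$, this says $\lim_n K^\beta_v(X_n)=K^\beta_v(\xi_0)=\psi_v$ for every $v$.

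It remains to upgrade such an almost sure trajectory, which may revisit vertices, into a path with pairwise distinct vertices. Transience guarantees that $\mathbb P_{v_0}$-almost every path visits each vertex only finitely often, and since $X_n\to\xi_0\notin V_\infty$ the path leaves every finite set; loop-erasure is therefore well defined and yields an infinite self-avoiding path $t=(t_k)$ with $t_1=v_0$ and $A_{t_kt_{k+1}}>0$ for all $k$, i.e.\ $t\in P(V)$ with the $t_k$ pairwise distinct. By construction $t_k=X_{m_k}$ for a strictly increasing sequence $m_k\to\infty$, so $\bigl(K^\beta_v(t_k)\bigr)_k$ is a subsequence of $\bigl(K^\beta_v(X_n)\bigr)_n$ and still converges to $\psi_v$; this is precisely (\ref{r200}).

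I expect the only genuinely delicate point to be the passage from the abstract exit law to a Dirac mass and thence to sure convergence of the kernels: one must verify that the measure produced by convergence to the boundary is the very measure whose uniqueness is asserted in Theorem \ref{r331}, so that extremality of $\psi$ can be converted into $m=\delta_{\xi_0}$. The loop-erasure step is routine once transience is available, the only care being to record that consecutive surviving vertices remain joined by an edge of $G$ and that the surviving times tend to infinity.
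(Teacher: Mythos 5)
Your proposal is correct and follows essentially the same route as the paper: the Doob transform to the stochastic matrix $B$ of Lemma \ref{s5}, convergence to the boundary for the resulting transient chain started at $v_0$, extremality of $\psi$ forcing the exit law to be a Dirac mass at a non-$V_{\infty}$ boundary point, and transience plus loop-erasure to produce a self-avoiding path realising (\ref{r200}). The only cosmetic difference is that you re-derive the almost sure convergence of the Martin kernels to $\psi$ from the bijectivity of $I$ on $M(\partial X_{\beta})$ (Theorem \ref{r331}), where the paper cites this fact directly as Theorem 5.1 of \cite{Sa} (minimality of the constant harmonic function $1$ for $B$ on $H_{v_0}$), and then extends from $H_{v_0}$ to all of $V$ via Lemma \ref{b65}.
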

\begin{proof} By construction the measure $m_v$ on $C(v)$ given by (\ref{s103}) is the measure ${\bf Pr}_v$ from
Theorem 1.12 in \cite{Wo} and the measure $P_x$, with $x = v$, from
(2.2) in \cite{Sa}, coming from the stochastic matrix
(\ref{s102}). Since $\psi$ is extremal in
$E(A,\beta)_{v_0}$ the constant vector $1$ is minimal harmonic for the stochastic matrix $B$ restricted to $H_{v_0}$. It
follows therefore from Theorem 5.1 in \cite{Sa} that with respect to
the measure $m_{v_0}$ almost all
elements in
\begin{equation}\label{s106}
\left\{(x_i) \in P(V) : \ x_1 = v_0 \right\}
\end{equation}
have the property that
\begin{equation}\label{s105}
\lim_{i \to \infty} \frac{\sum_{n=0}^{\infty}
  B^n_{vx_i}}{\sum_{n=0}^{\infty} B^n_{v_0x_i}} = 1 
\end{equation}
for all $v \in H_{v_0}$. Let $w \in H_{v_0}$. Since $\sum_{n=0}^{\infty} B_{ww}^n =
\sum_{n=0}^{\infty} A^n_{ww} e^{-n\beta} < \infty$, it
follows from Theorems 3.2 and 3.4 in \cite{Wo} that the set
$$
\left\{(x_i) \in P(V) : \ x_1 = v_0, \ x_i = w \ \text{for at most
    finitely many} \ i \right\}
$$
has full $m_{v_0}$-measure. Since $H_{v_0}$ is countable it follows that so has
$$
 \bigcap_{w \in H_{v_0}} \left\{(x_i) \in P(V) : \ x_1 = v_0, \ x_i = w \ \text{for at most
    finitely many} \ i \right\} ,
$$ 
which is the same set as $\left\{(x_i) \in P(V) : \ x_1 = v_0, \ \lim_{i \to \infty} x_i =
  \infty \right\}$. Hence the
set of elements $(x_i)$ from (\ref{s106}) for which (\ref{s105}) holds
for all $v \in H_{v_0}$ and at the same have the property that $\lim_{i
  \to \infty} x_i = \infty$, is also a set of full $m_{v_0}$-measure. Since 
$$
 \frac{\sum_{n=0}^{\infty}
  B^n_{vx_i}}{\sum_{n=0}^{\infty} B^n_{v_0x_i}} = \psi_v^{-1}  \frac{\sum_{n=0}^{\infty}
  A^n_{vx_i}e^{-n\beta}}{\sum_{n=0}^{\infty} A^n_{v_0x_i}e^{-n\beta}}  , 
$$
it follows that there is an element $(t'_i)$ from (\ref{s106}) such
that $\lim_{i \to \infty} t'_i = \infty$ and 
\begin{equation}\label{s113}
\lim_{i \to \infty}  \frac{\sum_{n=0}^{\infty}
  A^n_{vt'_i}e^{-n\beta}}{\sum_{n=0}^{\infty} A^n_{v_0t'_i}e^{-n\beta}}
= \psi_v
\end{equation}
for all $v \in H_{v_0}$. It follows then from the uniqueness part of
the statement in Lemma \ref{b65} that (\ref{s113}) holds for all $v
\in V$. Finally, since $\lim_{i \to \infty} t'_i = \infty$ it is easy
to construct from $t'$ a path $(t_i) \in P(V)$ such that $i \neq j
\Rightarrow t_i \neq t_j$, and such that there is a sequence $n_1 <
n_2 < n_3 < \cdots $ in $\mathbb N$ with $t_i = t'_{n_i}$ for all
$i$. The sequence $(t_i)$ has the stated properties.

\end{proof}

In summary we have found that in the transient case the extremal elements
of $E(A,\beta)_{v_0}$ consist of the vectors arising from an element
$w \in H_{v_0} \cap V_{\infty}$ by the formula (\ref{r202}) when they
are not $\beta$-harmonic, and are given by an infinite path $(t_i) \in
P(V)$ such that $i \neq j \Rightarrow t_i \neq t_j$ via the formula
(\ref{r200}) when they are $\beta$-harmonic.

%Note however, that there
%are no converse to the last statement; an infinite path $(t_i)$ in
%$P(V)$, even
%with the stated 'no repetition' property, will in general not give
%rise to a $\beta$-harmonic vector. 

\end{document}